\newcommand{\rd}{\mathbb{R}^d}
\newcommand{\rk}{\mathbb{R}^k}
\newcommand{\rr}{\mathbb{R}}
\newcommand{\E}[2][]{\mathbb{E}_{#1} \left[ #2 \right]}
\newcommand{\conE}[3][]{\E[#1]{#2 | #3}}
\newcommand{\Prob}[1]{\mathbb{P}\left(#1\right)}
\newcommand{\norm}[1][\cdot]{\left\| \kern.05em #1 \kern.05em \right\|}
\newcommand{\var}{\beta\operatorname{-VaR}}
\newcommand{\cvar}{\beta\operatorname{-CVaR}}
\newcommand{\conic}[1]{\operatorname{conic}\left( #1 \right)}
\newcommand{\interior}[1]{\operatorname{int}\left(#1\right)}
\newcommand{\minimize}[1][]{\underset{#1}{\operatorname{minimize}}\ }
\newcommand{\rv}{\bm{\xi}}
\newcommand{\orv}{\xi}
\newcommand{\rvsup}{\Xi}
\newcommand{\arv}{\bm{\tilde{\xi}}}
\newcommand{\riskregion}{\mathcal{R}}
\newcommand{\conwpo}{\overset{\text{w.p.1}}{\to}}
\newtheorem{theorem}{Theorem}[section]
\newtheorem{corollary}[theorem]{Corollary}
\newtheorem{lemma}[theorem]{Lemma}
\newtheorem{proposition}[theorem]{Proposition}
\newtheorem{definition}[theorem]{Definition}
\newtheorem{example}[theorem]{Example}
\begin{document}

\title{Problem-driven scenario generation: an analytical approach for stochastic programs with tail risk measure}
\author[*]{Jamie Fairbrother}
\author[*]{Amanda Turner} 
\author[**]{Stein W. Wallace}
\affil[*]{STOR-i Centre for Doctoral Training, Lancaster University. United Kingdom}
\affil[**]{Department of Business and Management Science, Norwegian School of Economics. Norway}

\maketitle

\begin{abstract}
Scenario generation is the construction of a discrete random
vector to represent parameters of uncertain values in 
a stochastic program. Most approaches to scenario generation
are \emph{distribution-driven}, that is, they attempt to construct
a random vector which captures well in a probabilistic sense
the uncertainty. On the other hand, a \emph{problem-driven}
approach may be able to exploit the structure of a problem
to provide a more concise representation of the uncertainty.

In this paper we propose an analytic approach to problem-driven scenario generation.
This approach applies to stochastic programs where a tail risk measure,
such as conditional value-at-risk, is applied to a loss function. Since tail risk measures only depend on
the upper tail of a distribution, standard methods of scenario
generation, which typically spread their scenarios evenly across
the support of the random vector, struggle to adequately
represent tail risk. Our scenario generation approach works
by targeting the construction of scenarios in areas of the distribution corresponding to the
tails of the loss distributions. We provide conditions under which our approach is consistent
with sampling, and as proof-of-concept demonstrate how our approach
could be applied to two classes of problem, namely network design and portfolio selection.
Numerical tests on the portfolio selection problem demonstrate
that our approach yields better and more stable solutions compared
to standard Monte Carlo sampling.
\end{abstract}

\section{Introduction}
\label{sec:tailrisk-intro}

Stochastic programming is a tool for making decisions under
uncertainty. Under this modeling paradigm, uncertain parameters are
modeled as a random vector, and one attempts to minimize (or
maximize) the expectation or risk measure of some loss function which
depends on the initial decision. However, what distinguishes
stochastic programming from other stochastic modeling approaches is
its ability to explicitly model future decisions based on outcomes of
stochastic parameters and initial decisions, and the associated costs
of these future decisions.  The power and flexibility of the
stochastic programming approach comes at a price: stochastic programs
are usually analytically intractable, and often not susceptible to solution
techniques for deterministic programs.

Typically, a stochastic program can only be solved when it is
\emph{scenario-based}, that is when the random vector for the problem
has a finite discrete distribution. For example,
stochastic linear programs become large-scale linear programs when the
underlying random vector is discrete. In the stochastic
programming literature, the mass points of this random vector are
referred to as \emph{scenarios}, the discrete distribution as the
\emph{scenario set} and the construction of this set as \emph{scenario
generation}. Scenario generation can consist of discretizing a
continuous probability distribution, or directly modeling the
uncertain quantities as discrete random variables. The more
scenarios in a set, the more computational power that is required
to solve the problem. The key issue of scenario generation is therefore how
to represent the uncertainty to ensure that the solution to the
problem is reliable, while keeping the number of scenarios low
so that the problem is computationally tractable.

A common approach to scenario generation is to fit a statistical
model to the uncertain problem parameters and then
generate a random sample from this for the scenario set. This has desirable asymptotic properties \cite{MR94k:90055,Shapiro03b}, but may
require large sample sizes to ensure the reliability of the solutions
it yields. This can be mitigated somewhat by using variance reduction
techniques such as stratified sampling and importance sampling
\cite{LinderothEA06}. Sampling also has the advantage that it can be
used to construct confidence intervals on the true solution value
\cite{mak_morton_wood_99}. Another approach  is to
construct a scenario set whose distance from the true distribution,
with respect to some probability metric, is small \cite{Pflug01,HeRo09,DupacovaEA03}.  
These approaches tend to yield better and much more
stable solutions to stochastic programs than does sampling.

A characteristic of these approaches to scenario generation is that
they are \emph{distribution-driven}; that is, they only aim to approximate
a distribution and are divorced from the stochastic program for which they
are producing scenarios.
By exploiting the structure of a problem, it may be possible to find a more parsimonious
representation of the uncertainty. Note that such a
\emph{problem-driven} approach may not yield a discrete distribution
which is close to the true distribution in a probabilistic sense; the
aim is only to find a discrete distribution which yields a high
quality solution to our problem.

Stochastic programs often have the objective of minimizing the expectation of a loss function. This is particularly appropriate when the initial decision represents a strategic decision that is going to be used again and again, and
individual large losses do not matter in the long term.
For example, in a stochastic facility location problem (e.g. see \cite{bienek2015})
the locations of several facilities must be chosen subject to the unknown
demands of customers in a way which minimizes fixed investment costs,
and future distribution costs.
In other cases, the decision may be only used once or a few times, and the
occurrence of large losses may have serious consequences such as bankruptcy. 
This is characteristic of the portfolio selection problem \cite{Mark52}
studied in detail in the latter part of this paper.
In this latter case, minimizing the expectation alone is not appropriate as this does
not necessarily mitigate against the possibility of large losses. One
possible remedy is to use a \emph{risk measure} which penalises in
some way the likelihood and severity of potential large losses. 

In this paper we are interested in stochastic programs which use
\emph{tail risk measures}.  A precise definition of a tail-risk
measure will be given in Section \ref{sec:tailrisk-tailrisk} but for
now, one can think of a tail risk measure as a function of a random
variable which only depends on the upper tail of its distribution
function. Tail risk measures are useful as they summarize the extent
of potential losses in the worst possible outcomes. Examples of tail
risk measures include the Value-at-Risk (VaR) \cite{Jo96} and the
Conditional Value-at-Risk (CVaR) \cite{Rockafellar00}, both of which
are commonly used in financial contexts. Although the methodology
developed in this paper can in principle be applied to any loss function,
in this work we are mainly interested in loss functions which arise in one and two-stage stochastic programs.

Distribution-driven scenario generation methods are particularly
problematic for stochastic programs involving tail risk measures.
This is because these methods tend to spread their scenarios evenly across
the support of distribution and so struggle to adequately represent the tail risk without using
a potentially prohibitively large number of scenarios.

In this paper, we propose an analytic problem-driven approach to
scenario generation applicable to stochastic programs which use tail
risk measures of a form made precise in Section~\ref{sec:tailrisk-tailrisk}.
We observe that the value of a tail risk measure depends only on scenarios
confined to an area of the distribution that we call the \emph{risk region}.
This means that all scenarios that are not in the risk
region can be aggregated into a single point.  
By concentrating scenarios in the risk region, we can calculate the
value of a tail risk measure more accurately.

Given a risk region for a problem, we propose a simple algorithm for
generating scenarios which we call \emph{aggregation sampling}. This
algorithm takes samples from the random vector until a specified
number of samples in the risk region have been produced, and all other
scenarios are aggregated into a single scenario. We provide and give
proofs of conditions under which this method is asymptotically
consistent with standard Monte Carlo sampling.

In general, finding a risk region is difficult as it is determined by
the loss function, problem constraints and the distribution of the
uncertain parameters. Therefore, we derive risk regions for two
classes of problem as a proof-of-concept of our methodology. The first
class of problems are those with monotonic loss functions which,
as will be shown, occur
naturally in the context of network design. The second class are
portfolio selection problems. For both types of risk regions we run
numerical tests which demonstrate that our methodology
yields better quality solutions and with greater reliability than
standard Monte Carlo sampling.

This paper is organized as follows: in Section~\ref{sec:related-work}
we discuss related work; in
Section~\ref{sec:tailrisk-tailrisk} we define tail risk measures and
their associated risk regions; in
Section~\ref{sec:tailrisk-aggsampling} we discuss how these risk
regions can be exploited for the purposes of scenario generation;
in Section~\ref{sec:tailrisk-conv-aggr-sampl} we
prove that our scenario generation method is consistent with 
standard Monte Carlo sampling;
in Sections~\ref{sec:monotonic} and \ref{sec:tailrisk-portfolio} we
derive risk regions for the two classes of problems described above;
in Section~\ref{sec:tailrisk-numtests} we present numerical tests;
finally in Section~\ref{sec:tailrisk-conclusions} we summarize our
results and make some concluding remarks.

\paragraph{Notation}

Throughout this paper random variables and vectors are represented by bold (mainly Greek) letters: $\bm{\theta},\ \bm{\xi},\ \bm{\zeta}$ and outcomes of these are represented by the corresponding non-bold letters: $\theta,\ \xi,\ \zeta$.
Inequalities used with vectors and matrices always apply
component-wise. $\norm$ represents the standard Euclidean norm.

\section{Related Work}
\label{sec:related-work}

There are relatively few cases of problem-driven scenario generation in
the literature. The earliest example of which we are aware
is the importance sampling approach of \cite{dantzig1990parallel} which
constructs a sampler from the loss function. Importance sampling 
has been used more recently for scenario generation for problems which, like our own,
concern rare events. In \cite{KozmikMorton2015} an importance sampling
scheme is used for a multistage problem involving the CVaR risk measure.
In \cite{barrera2016chance}, an importance sampling approach is proposed for chance-constrained
stochastic programs where the permitted probabilities of constraint
violation are very small. 

There is an interesting connection between problem-driven scenario
generation and distributionally robust optimization \cite{Zackova1966,Dupacova11,WiesemannKuhnSim14}. 
In distributionally
robust optimization, the distribution of the random variables
in a stochastic program is itself uncertain, and one must optimize
for the worst-case distribution. Solving a distributionally robust
optimization problem thus involves finding, at least implicitly, the worst-case distribution
or scenario set for given objective and constraints. In this
sense, distributionally robust optimization
could be considered as a problem-driven scenario generation method.
Of particular relevance for this work, the paper \cite{VinhDoan2015}
solves a distributionally robust portfolio selection problem involving
the CVaR risk measure where the distribution of asset returns has specified discrete marginals, but
unknown joint distribution.

The idea that in stochastic programs with tail risk measures
some scenarios do not contribute to the calculation of the tail-risk
measure was also exploited in \cite{BertrandMinguez12}.
However, they propose a solution algorithm rather than
a method of scenario generation. Their approach is
to iteratively solve the problem with a subset of scenarios, identify
the scenarios which have loss in the tail, update their scenario set
appropriately and resolve, until the true solution has been found.
Their method has the benefit that it is not distribution dependent.
On the other hand, their method works for only the $\cvar$ risk measure,
while our approach works in principle for any tail risk measure.

\section{Tail risk measures and risk regions}
\label{sec:tailrisk-tailrisk}

In this section we present the core theory to our scenario generation
methodology. Specifically, in Section~\ref{sec:tailrisk-tailrisk-rv}
we formally define tail-risk measures of random variables and in
Section~\ref{sec:tailrisk-tailrisk-opt} we define risk regions and
present some key results related to these.

 \subsection{Tail risk of random variables}
\label{sec:tailrisk-tailrisk-rv}

In our set-up we suppose we have some random variable representing an
uncertain loss. For our purposes, we take a risk measure to be any function of a random variable. The following formal
definition is adapted from \cite{tasche2002expected}.

\begin{definition}[Risk Measure]
  \label{def:risk-measure}
  Let $(\Omega, \mathbb{P})$ be a probability space,
  and $\Theta$ be the set of measurable real-valued random variables on $(\Omega, \mathbb{P})$.
  Then, a risk measure is some function $\rho: \Theta \rightarrow \rr \cup \{\infty\}$.
\end{definition}

For a risk measure to be useful, it should in some way penalize potential large losses.
For example, in the classical Markowitz problem
\cite{Mark52}, one aims to minimize the variance of the return of a portfolio.
By choosing a portfolio with a low variance, we reduce the
probability of larges losses as a direct consequence of Chebyshev's
inequality (see for instance \cite{BillingsleyMeasure95}).
Various criteria for risk measures have been proposed; in
\cite{Artzner98} a \emph{coherent risk measure} is defined to
be a risk measure which satisfies axioms such as positive
homogeneity and subadditivity; another perhaps desirable criterion for
risk measures is that the risk measure is consistent with
respect to first and second order stochastic dominance, see
\cite{MR1922754} for instance.

Besides not satisfying some of the above criteria, 
a major drawback with using variance as a measure is that
it penalizes all large deviations from the mean, that is, it penalizes
large profits as well as large losses. This motivates the idea of using risk measures which
depend only on the upper tail of the loss distribution. To formalize
this idea, we first recall the definition of
quantile function.

\begin{definition}[Quantile Function]
  Suppose $\bm{\theta}$ is a random variable with distribution function $F_{\bm{\theta}}$. Then 
  the generalized inverse distribution function, or \emph{quantile function}
  is defined as follows:
  \begin{align*}
    F^{-1}_{\bm{\theta}} : (0, 1] &\rightarrow \rr\cup\{\infty\}\\
    \beta &\mapsto \inf\{ x\in\rr: F_{\bm{\theta}}(x) \geq \beta \}.
  \end{align*}
  We refer to the quantile function evaluated at $\beta$,
  $F_{\bm{\theta}}^{-1}(\beta)$, as the \emph{$\beta$-quantile}.
\end{definition}

The $\beta$-quantile can be interpreted as the
smallest value for which the distribution function is greater
than or equal to $\beta$. The $\beta$-tail of a distribution
is the restriction of the distribution function
to values equal to or above the $\beta$-quantile. In the context
of risk management, we typically have $0.9 \leq \beta < 1.0$.
The following definition says that a tail risk measure
is a risk measure that only depends on the $\beta$-tail
of a distribution.

\begin{definition}[Tail Risk Measure]
  Let $\rho_\beta: \Theta \rightarrow \rr \cup \{\infty\}$ be a risk measure per Definition~\ref{def:risk-measure}.
  Then $\rho_\beta$ is a $\beta$-tail risk measure if $\rho_\beta(\bm{\theta})$ depends
  only on the restriction of quantile function of $\bm{\theta}$ above $\beta$,
  in the sense that if $\bm{\theta}$ and $\bm{\tilde{\theta}}$ are random variables with
  $\mathrel{F_{\bm{\theta}}^{-1}|_{[\beta,1]}}= \mathrel{F_{\bm{\tilde{\theta}}}^{-1}|_{[\beta,1]}}$ then $\rho_\beta(\bm{\theta}) = \rho_\beta(\bm{\tilde{\theta}})$.
\end{definition}

To show that $\rho_\beta$ is a $\beta$-tail risk measure, we must show that  $\rho_\beta(\bm{\theta})$ can be written as a function of the quantile function above or equal to $\beta$. Two very popular tail risk measures are the value-at-risk \cite{Jo96} and the conditional value-at-risk \cite{RockUry02}:

\begin{example}[Value at risk]
  Let $\bm{\theta}$ be a random variable, and $0< \beta < 1$. Then, the $\beta-$VaR for $\bm{\theta}$ is defined to be the $\beta$-quantile of $\bm{\theta}$:
  \begin{equation*}
    \var(\bm{\theta}) := F_{\bm{\theta}}^{-1}(\beta).
  \end{equation*}
\end{example}

\begin{example}[Conditional value at risk]
  Let $\bm{\theta}$ be a random variable, and $0 < \beta < 1$. 
  The following alternative characterization of $\cvar$
  \cite{acerbi2002coherence} shows directly that it is
  a $\beta$-tail risk measure.
  \begin{equation*}
    \cvar(\bm{\theta}) = \frac{1}{1-\beta}\int_{\beta}^1 F^{-1}_{\bm{\theta}}(u)\ du.
  \end{equation*}
  Note that in the case that $\bm{\theta}$ is a continuous random variable, the $\cvar$ is the conditional expectation of the random variable above its $\beta$-quantile (e.g. see \cite{RockUry02}).
\end{example}

The observation that we exploit for this work is that very
different random variables will have the same $\beta$-tail risk
measure as long as their $\beta$-tails are the same. 

When showing that two distributions have the same $\beta$-tails, it is
convenient to use distribution functions rather than quantile functions.
The following result gives conditions which ensure that the $\beta$-tails
of two distributions are the same. We will make use of these in proofs 
later in this paper.

\begin{lemma}
  \label{lem:quantile-beta-tail}
  Suppose that $\bm{\theta}$ and $\bm{\tilde{\theta}}$ are random
  variables such that one of the two following conditions hold:
  \begin{enumerate}[(i)]
  \item $F_{\bm{\tilde{\theta}}}(\theta) = F_{\bm{\theta}}(\theta)$ for all $\theta \geq F_{\bm{\theta}}^{-1}(\beta)$ and $F_{\bm{\tilde{\theta}}}(\theta) < \beta$ for all $\theta < F_{\bm{\theta}}^{-1}(\beta)$.
  \item $F_{\bm{\tilde{\theta}}}(\theta) = F_{\bm{\theta}}(\theta)$ for all $\theta \geq L$ for some $L < F_{\bm{\theta}}^{-1}(\beta)$.
  \end{enumerate}
  Then, $F_{\bm{\tilde{\theta}}}^{-1}(u) = F_{\bm{\theta}}^{-1}(u)$ for all $u \geq \beta$.
\end{lemma}

\begin{proof}
  We first prove that condition (i) implies that the $\beta$-tails are the same.
  Since $F_{\bm{\tilde{\theta}}}(\theta) = F_{\bm{\theta}}(\theta) \geq \beta$ for all $\theta \geq F_{\bm{\theta}}^{-1}(\beta)$,
  we must have $F_{\bm{\tilde{\theta}}}^{-1}(\beta) \leq F_{\bm{\theta}}^{-1}(\beta)$. Also, given $F_{\bm{\tilde{\theta}}}(\theta) < \beta$
  for all $\theta < F_{\bm{\theta}}^{-1}(\beta)$ we must have $F_{\bm{\tilde{\theta}}}^{-1}(\beta) \geq F_{\bm{\theta}}^{-1}(\beta)$ and so
  $F_{\bm{\tilde{\theta}}}^{-1}(\beta) = F_{\bm{\theta}}^{-1}(\beta)$.
  
  Now suppose $u \geq \beta$. Then,
  \begin{align*}
    F_{\bm{\tilde{\theta}}}^{-1}(u) &= \inf\{ \theta\in\rr :\ F_{\bm{\tilde{\theta}}}(\theta) \geq u\}\\
    &= \inf\{ \theta \geq F_{\bm{\tilde{\theta}}}^{-1}(\beta) : F_{\bm{\tilde{\theta}}}(\theta) \geq u\}\\
    &= \inf\{ \theta \geq F_{\bm{\theta}}^{-1}(\beta) : F_{\bm{\theta}}(\theta) \geq u\}\\
    &= \inf\{ \theta\in\rr :\ F_{\bm{\theta}}(\theta) \geq u\}\\
    &= F_{\bm{\theta}}^{-1}(u)
  \end{align*}
  where the second and fourth lines follow from the fact that quantile functions are non-decreasing.

  In the case condition (ii) holds, we have for $L < \theta < F_{\bm{\theta}}^{-1}(\beta)$ that $F_{\bm{\tilde{\theta}}}(\theta) = F_{\bm{\theta}}(\theta) < \beta$,
  and since distribution functions are non-decreasing this means that $F_{\bm{\tilde{\theta}}}(\theta) < \beta$ for all $\theta < F_{\bm{\theta}}^{-1}(\beta)$.
  The result now follows by application of condition (i).
\qed
\end{proof}

\subsection{Risk regions}
\label{sec:tailrisk-tailrisk-opt}

In this paper we are primarily interested in problems of the following form:
\begin{equation}
  \label{eq:tail-risk-sp-general}
  \minimize[x\in\mathcal{X}] \rho_{\beta}(f(x,\rv))
\end{equation}
where $\mathcal{X} \subseteq \rk$ is a deterministic set of feasible
decisions, $\rv\in\rvsup\subseteq\rd$ is a random vector defined on a probability space $(\Omega, \mathbb{P})$, the set $\rvsup$ is convex, $f: \mathcal{X}\times\rvsup \rightarrow \rr$ is a \emph{loss function},
and $\rho_{\beta}$ is a \emph{tail risk measure}.

In order to solve these problems accurately, we need to be able to approximate well the tail risk measure of our the loss function $f(x, \rv)$ for all feasible decisions $x\in\mathcal{X}$.

To avoid repeated use of cumbersome notation we introduce the following
short-hand for distribution and quantile functions:
 \begin{align*}
   F_x(\theta) &:= F_{f(x,\rv)}(\theta) = \Prob{f(x,\rv)\leq \theta},\\
   F_x^{-1}(\beta) &:= F_{f(x,\rv)}^{-1}(\beta) = \inf\{\theta\in\rr:\ F_x(\theta) \geq \beta\}.
 \end{align*}
In addition, since the loss function is only defined on $\rvsup$, we frequently take
complements of sets contained in $\rvsup$. Again, to avoid repeated use of cumbersome
notation, the standard notation for complements will apply with respect to $\rvsup$.
That is, for $\riskregion\subseteq\rvsup$ we write $\riskregion^{c}$ in place of $\rvsup \setminus \riskregion$.

Since tail risk measures depend only on those outcomes which are in the $\beta$-tail,
we aim to identify which outcomes
lead to a loss in the $\beta$-tails for a feasible decision. This motivates
the following definition.

\begin{definition}[Risk region]
  For $0 < \beta < 1$ the $\beta$-risk region with respect to the
  decision $x\in\mathcal{X}$ is defined as follows:
  \begin{equation*}
    \riskregion_{x}(\beta) = \{\orv \in \rvsup : F_{x}\left(f(x,\orv)\right) \geq \beta\},
  \end{equation*}
  or equivalently
  \begin{equation}
    \label{eq:tailrisk-def-risk-region}
    \riskregion_{x}(\beta) = \{\orv \in \rvsup : f(x,\orv) \geq F_{x}^{-1}(\beta)\}.
  \end{equation}
  The risk region with respect to the feasible region $\mathcal{X} \subset \rk$ is defined
  to be:
  \begin{equation}
    \label{eq:tailrisk-def-risk-region-2}
    \riskregion_{\mathcal{X}}(\beta) = \bigcup_{x\in\mathcal{X}} \riskregion_x(\beta).
  \end{equation}
  The complement of this region is called the \emph{non-risk region}.
  This can also be written
  \begin{equation}
    \label{eq:tailrisk-non-risk-region}
    \riskregion_{\mathcal{X}}(\beta)^{c} = \bigcap_{x\in\mathcal{X}} \riskregion_x(\beta)^{c}.
  \end{equation}
\end{definition}

The following basic properties of the risk region follow directly from the definition.

\begin{align}
 \text{(i)}\ &0 < \beta' < \beta < 1\ \Rightarrow\ \riskregion_{\mathcal{X}}(\beta) \subseteq \riskregion_{\mathcal{X}}(\beta');\label{eq:tailrisk-beta-increase}\\
 \text{(ii)}\ &\mathcal{X}' \subset \mathcal{X}\ \Rightarrow \riskregion_{\mathcal{X}'}(\beta) \subseteq\riskregion_{\mathcal{X}}(\beta)\label{eq:tailrisk-X-increase} \text{ for all } 0< \beta < 1;\\
 \text{(iii)}\ &\text{If } \orv\mapsto f(x,\orv) \text{ is upper semi-continuous then } \riskregion_{x}(\beta) \text{ is closed and } \riskregion_x(\beta)^c \text{ is open.}
\end{align}

We now state a technical property and prove that this ensures the distribution of the random
vector in a given region completely determines the value of a tail risk
measure. In essence, this condition ensures that there is enough mass in the set
to ensure that the $\beta$-quantile does not depend on the probability distribution outside of it.

\begin{definition}[Aggregation condition]
  Suppose that $\riskregion_{\mathcal{X}}(\beta) \subseteq \riskregion \subset \rvsup$ and that for all $x\in\mathcal{X}$, $\riskregion$ satisfies the following condition:
  \begin{equation}
    \label{eq:agg-condition}
    \Prob{\rv \in \{\orv\in\rvsup: \theta' < f(x,\orv) < F_{x}^{-1}\left(\beta\right)\}\cap\riskregion} > 0 \qquad \forall\ \theta' < F^{-1}_{x}\left(\beta\right).
  \end{equation}
  Then $\riskregion$ is said to satisfy the $\beta$-\emph{aggregation condition}.
\end{definition}

The motivation for the term \emph{aggregation condition} comes from
Theorem~\ref{thr:equiv-rv} which follows. This result ensures that if a set satisfies
the aggregation condition then we can transform the probability
distribution of $\rv$ so that all the mass in the complement of this set
can be aggregated into a single point without affecting the value of the
tail risk measure.  This property is particularly relevant to scenario
generation as if we have such a set, then all scenarios which it does not contain
can be aggregated, reducing the size of the stochastic program.
Note that the $\beta$-aggregation condition does not hold if $\rv$ is a discrete
random vector. However, in this case, the conclusion of the theorem
holds without any extra conditions on $\riskregion$.

\begin{theorem}
  \label{thr:equiv-rv}
  Suppose that $\riskregion_{\mathcal{X}}(\beta) \subseteq \riskregion \subset \rvsup$ and that $\arv$ is a random vector for which
  \begin{equation}
    \label{eq:agg-condition-2}
    \Prob{\rv \in \mathcal{A}} = \Prob{\arv \in \mathcal{A}}\qquad\text{for any measurable } \mathcal{A}\subseteq \riskregion.
  \end{equation}
  Then for any tail risk measure $\rho_{\beta}$ we have $\rho_\beta\left(f(x,\rv)\right) = \rho_\beta\left(f(x,\arv)\right)$ for all $x\in \mathcal{X}$, if one of the following conditions hold:
  \begin{enumerate}[(a)]
  \item $\riskregion$ satisfies the $\beta$-aggregation condition,
  \item $\rv$ is a discrete random vector.
  \end{enumerate}
\end{theorem}

\begin{proof}
  Fix $x\in\mathcal{X}$. To show that $\rho_\beta\left(f(x,\rv)\right) = \rho_\beta\left(f(x,\arv)\right)$ we must show that the $\beta$-quantile
  and the $\beta$-tail distributions of $f(x,\rv)$ and $f(x,\arv)$ are the same.
  Using Lemma~\ref{lem:quantile-beta-tail}, the following two conditions are necessary and sufficient for this to occur:
  \begin{equation*}
    F_{x}(\theta) = F_{f(x,\arv)}(\theta)\ \ \forall\ \theta \geq F_{x}^{-1}\left(\beta\right) \text{  and  }
    F_{f(x,\arv)}(\theta) < \beta\ \ \forall\ \theta < F_{x}^{-1}\left(\beta\right).
  \end{equation*}
  
  In the first case suppose that $\theta' \geq F_{x}^{-1}(\beta)$. Note that as a direct consequence of \eqref{eq:agg-condition-2} we have
  \begin{equation}
    \label{eq:non-risk-region-rv-prob}
    \Prob{\rv\in\mathcal{B}} = \Prob{\arv\in\mathcal{B}} \qquad \text{for any } \mathcal{B}\supseteq\riskregion^{c}.
  \end{equation}
  Now,
  \begin{align*}
    F_{f(x,\arv)}(\theta') &= \Prob{\arv\in\{\orv\in\rvsup:\ f(x,\orv) \leq \theta'\}}\\
    &= \Prob{\arv\in\underbrace{\riskregion^c \cap \{\orv\in\rvsup:\ f(x,\orv) \leq \theta'\}}_{= \riskregion^c}} + \Prob{\arv\in\underbrace{\riskregion \cap \{\orv\in\rvsup:\ f(x,\orv) \leq \theta'\}}_{\subseteq \riskregion}} \\
    & = \Prob{\rv \in \riskregion^c} + \Prob{\rv \in \riskregion \cap \{\orv\in\rvsup: f(x,\orv) \leq \theta'\}}\qquad \text{by \eqref{eq:agg-condition-2} and \eqref{eq:non-risk-region-rv-prob}}\\
    & = \Prob{\rv \in \{\orv\in\rvsup: f(x,\orv) \leq \theta'\}} = F_{x}(\theta') \qquad \text{as required.}
\end{align*}

In the second case we suppose $\theta' < F_{x}^{-1}(\beta)$. We show that $F_{f(x,\arv)}(\theta') < \beta$
  for each of the two conditions (a) and (b) separately.
  In the case where condition (a) holds, that is, when $\riskregion$ satisfies the $\beta$-aggregation condition we have:
  \begin{align*}
    F_{f(x,\arv)} (\theta')  &= \Prob{\arv \in \{\orv\in\rvsup: f(x,\orv) \leq \theta'\}} \leq \Prob{\arv \in \riskregion^c\cup\{\orv\in\rvsup: f(x,\orv) \leq \theta'\}}\\
    &= \Prob{\arv\in\underbrace{\{\orv\in\rvsup:f(x,\orv) < F_{x}^{-1}(\beta)\}}_{\supseteq \riskregion^{c}}} - \Prob{\arv\in \underbrace{\riskregion\cap\{\orv\in\rvsup: \theta' < f(x,\orv) < F_{x}^{-1}(\beta)\}}_{\subseteq\riskregion}}\\
                             &= \Prob{\rv \in \{\orv\in\rvsup: f(x,\orv) < F_{x}^{-1}(\beta)\}} \\
    & \qquad - \Prob{\rv\in\riskregion\cap\{\orv\in\rvsup: \theta' < f(x,\orv) < F_{x}^{-1}(\beta)\}}\qquad\text{by \eqref{eq:agg-condition-2} and \eqref{eq:non-risk-region-rv-prob}}\\
    &< \Prob{\rv\in\{\orv\in\rvsup: f(x,\orv) < F_{x}^{-1}(\beta)\}}\qquad\text{by \eqref{eq:agg-condition}}\\
    &\leq \beta
  \end{align*}
  as required.
  In the case condition (b) holds, that is when $\rv$ is discrete, we have:
  \begin{align*}
    F_{f(x,\arv)}(\theta') &\leq \Prob{f(x,\arv) < F_{x}^{-1}(\beta)}\\
    & = \Prob{f(x, \rv) < F_{x}^{-1}(\beta)} \\ 
    & < \beta \qquad  \text{ since } \rv \text{ is discrete}
  \end{align*}
  as required.
\qed
\end{proof}

It is difficult to verify that a set $\riskregion \supseteq \riskregion_{\mathcal{X}}(\beta)$ satisfies
the $\beta$-aggregation condition by directly checking that the condition \eqref{eq:agg-condition} holds.
The following proposition gives conditions under which it holds immediately for $\riskregion_{\mathcal{X}}(\beta')$
when $\beta' < \beta$.

\begin{proposition}
  Suppose $\beta' < \beta$ and $F_{x}$ is continuous at $F_{x}^{-1}(\beta)$ for all $x\in\mathcal{X}$. 
  Then, $\riskregion_{\mathcal{X}}(\beta')$ satisfies the $\beta$-aggregation condition. That is, for all $x\in\mathcal{X}$
    \begin{equation*}
    \Prob{\rv \in \{\orv\in\rvsup: \theta' < f(x,\orv) < F_{x}^{-1}\left(\beta\right)\}\cap\riskregion_{\mathcal{X}}\left(\beta'\right)} > 0 \qquad \forall\ \theta' < F^{-1}_{x}\left(\beta\right).
    \end{equation*}
\end{proposition}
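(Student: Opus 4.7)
The plan is to exploit property (ii) of risk regions, $\riskregion_x(\beta')\subseteq\riskregion_{\mathcal{X}}(\beta')$, to reduce the problem to showing
\[
\Prob{Y\in\{y: z'\leq f(x,y)\leq F_x^{-1}(\beta)\}\cap\riskregion_x(\beta')}>0.
\]
By definition $\riskregion_x(\beta')=\{y: f(x,y)\geq F_x^{-1}(\beta')\}$, and $\beta'<\beta$ gives $F_x^{-1}(\beta')\leq F_x^{-1}(\beta)$ by monotonicity of the generalized inverse, so this intersection is exactly the preimage under $f(x,\cdot)$ of the closed interval $[\max(z',F_x^{-1}(\beta')),F_x^{-1}(\beta)]$.

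I would then split into two cases depending on the location of $z'$. If $z'\leq F_x^{-1}(\beta')$, the relevant event becomes $\{F_x^{-1}(\beta')\leq f(x,Y)\leq F_x^{-1}(\beta)\}$, whose probability I would bound below by $\beta-\beta'>0$ using the two standard facts that $F_x(F_x^{-1}(\beta))\geq \beta$ and $\Prob{f(x,Y)<F_x^{-1}(\beta')}\leq\beta'$. The latter follows from the definition of the generalized inverse, which yields $F_x(z)<\beta'$ for every $z<F_x^{-1}(\beta')$, so $\Prob{f(x,Y)<F_x^{-1}(\beta')}=\sup_{z<F_x^{-1}(\beta')}F_x(z)\leq\beta'$.

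In the remaining case $z'>F_x^{-1}(\beta')$, the event reduces to $\{z'\leq f(x,Y)\leq F_x^{-1}(\beta)\}$, whose probability is at least $\beta-F_x(z')$. Since $z'<F_x^{-1}(\beta)$, the same characterization of the generalized inverse gives $F_x(z')<\beta$, so the lower bound is strictly positive.

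The only technical point, and the place one has to be careful, is the bookkeeping around strict versus non-strict inequalities in the quantile characterization (especially when $F_x$ has an atom at $F_x^{-1}(\beta)$ or a flat piece between $\beta'$ and $\beta$). Nothing deeper is needed, because the strict hypothesis $\beta'<\beta$ provides slack of size $\beta-\beta'$ in the first case and $\beta-F_x(z')$ in the second, yielding the required strict positivity in both.
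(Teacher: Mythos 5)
Your proof is correct, and it reaches the conclusion by a slightly different decomposition than the paper's. The paper splits on whether $F_{x}^{-1}(\beta') = F_{x}^{-1}(\beta)$: when the two quantiles coincide it argues that $F_{x}$ must have an atom there (of mass at least $\beta-\beta'$, though the paper only asserts positivity), and when they differ it uses that the event $\{y: z' < f(x,y)\leq F_{x}^{-1}(\beta)\}$ is already contained in $\riskregion_{\mathcal{X}}(\beta')$ for $z'$ between the two quantiles, with smaller $z'$ handled by monotonicity of the event. You instead split on the position of $z'$ relative to $F_{x}^{-1}(\beta')$ and produce explicit quantitative lower bounds, $\beta-\beta'$ in one case and $\beta - F_{x}(z')$ in the other, using only the two standard facts $F_{x}(F_{x}^{-1}(\beta))\geq\beta$ and $\Prob{f(x,Y)<F_{x}^{-1}(\beta')}\leq\beta'$. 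Your version is arguably cleaner: the atom argument of the paper's first case is subsumed by your $\beta-\beta'$ bound (when the quantiles coincide the interval degenerates to a point carrying that mass), you do not need a separate monotonicity remark to cover all $z'<F_{x}^{-1}(\beta)$, and the explicit bounds would be reusable if one wanted a quantitative version of the aggregation condition. The reduction to $\riskregion_{x}(\beta')\subseteq\riskregion_{\mathcal{X}}(\beta')$ is valid and is implicitly what the paper does as well. Both proofs are elementary and rest on the same properties of the generalized inverse; the difference is purely in the bookkeeping.
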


\begin{proof}
  Fix $x\in \mathcal{X}$. Since $F_{x}$ is continuous at $F_{x}^{-1}(\beta)$ we must have that $F_{x}^{-1}\left(\beta'\right) < F_{x}^{-1}\left(\beta\right)$.
  Now, for all $F_{x}^{-1}\left(\beta'\right) < \theta' < F_{x}^{-1}\left(\beta\right)$, we have $\{\orv\in\rvsup:\theta' < f(x,\orv) < F^{-1}_{x}\left(\beta\right)\} \subset \riskregion_{\mathcal{X}}(\beta')$ and so 
  \begin{equation*}
    \Prob{\rv \in \{\orv\in\rvsup: \theta' < f(x,\orv) < F_{x}^{-1}\left(\beta\right)\}\cap\riskregion_{\mathcal{X}}\left(\beta'\right)} = \Prob{\theta' < f(x,\rv) < F_{x}^{-1}\left(\beta\right)} > 0.
  \end{equation*}
\qed
\end{proof}

For convenience, we now drop $\beta$ from our notation and terminology.
Thus, we refer to the $\beta$-risk region and $\beta$-aggregation condition 
as simply the risk region and aggregation condition respectively,
and write $\riskregion_{\mathcal{X}}(\beta)$ as $\riskregion_{\mathcal{X}}$.

All sets satisfying the aggregation condition must contain the risk
region, however, the aggregation condition does not necessarily hold
for the risk region itself. 

We must impose extra conditions on the
problem to avoid some degenerate cases where the aggregation
condition and the conclusion of Theorem~\ref{thr:equiv-rv} do not hold. 
The following example demonstrates such a degenerate case.

\begin{example}
  Let $\mathcal{X} = \rr^{+}\setminus\{0\}$, $\rvsup=[0,1]$, $\rv\sim \operatorname{Uniform}(0,1)$
  and $f : (x,\orv) \mapsto x\orv$. Then $\riskregion_{x} = [\beta, 1]$ for
  all $x\in\mathcal{X}$, and so $\riskregion_{\mathcal{X}} = [\beta, 1]$. Now, consider the random variable 
  $\phi(\rv)$ where $\phi:\rr\rightarrow\rr$
  is defined as follows:
  $$\phi(\orv) =
  \begin{cases}
    \orv & \text{ if } \orv \geq \beta,\\
    0 & \text{othewise}.
  \end{cases}
  $$
  Since $\phi(\rv) = \rv$ for all $\rv\in\riskregion_{\mathcal{X}}$
  we have $\Prob{\phi(\rv) \in A} = \Prob{\rv\in A}$ for all $A\subseteq\riskregion_{\mathcal{X}}$. On the other hand, we have that $F^{-1}_{f(x,\phi(\rv))}(\beta) = 0 < \beta = F^{-1}_{f(x,\rv)}(\beta)$.
\end{example}

The following result provides extra conditions for
continuous distributions which ensure that the aggregation condition holds
for the risk region $\riskregion_{\mathcal{X}}$.

\begin{proposition}
  \label{prop:agg-condition-suff-conditions}
  Suppose that $\rv$ is a continuous random vector whose support coincides with $\rvsup$, and that the following conditions hold:
  \begin{enumerate}[(i)]
  \item $\orv \mapsto f(x,\orv)$ is continuous for all $x\in\mathcal{X}$,

  \item For each $x\in \mathcal{X}$ there exists $x'\in\mathcal{X}$ such that
    \begin{equation}
      \label{eq:set-intersect}
      \interior{\rvsup}\cap \interior{\riskregion_x\cap\riskregion_{x'}} \neq \emptyset \text{ and } \interior{\rvsup}\cap \interior{\riskregion_{x'}\setminus \riskregion_{x}} \neq \emptyset,
    \end{equation}
  \item $\interior{\rvsup}\cap\interior{\riskregion_{\mathcal{X}}}$ is connected.
  \end{enumerate}
  Then the risk region $\riskregion_{\mathcal{X}}$ satisfies the aggregation condition.
\end{proposition}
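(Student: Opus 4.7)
The plan is to show that the set
\[
A \;=\; \{y : z' < f(x,y) \leq F_x^{-1}(\beta)\}\cap \riskregion_{\mathcal{X}}
\]
contains a nonempty open subset $V$ with $V\cap \interior{\mathcal{Y}}\neq\emptyset$. Since, by definition of the support, every open set meeting $\interior{\mathcal{Y}}$ has positive probability under $Y$, this will immediately give $\Prob{Y\in A} > 0$, which is the aggregation condition for $\riskregion_{\mathcal{X}}$. So throughout we fix $x\in\mathcal{X}$ and $z' < F_x^{-1}(\beta)$, and work entirely inside the open connected set $U := \interior{\mathcal{Y}}\cap\interior{\riskregion_{\mathcal{X}}}$ supplied by condition (i).

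The first step is to harvest from condition (iii) two points $y_1,y_2\in U$ on opposite sides of the $\beta$-quantile: choose $y_1\in \interior{\mathcal{Y}}\cap \interior{\riskregion_x\cap\riskregion_{x'}}$, so that $f(x,y_1)\geq F_x^{-1}(\beta)$ (because $y_1\in\riskregion_x$), and choose $y_2\in \interior{\mathcal{Y}}\cap \interior{\riskregion_{x'}\setminus\riskregion_x}$, so that $f(x,y_2) < F_x^{-1}(\beta)$ (because $y_2\notin\riskregion_x$). Both points lie in $U$, since $\riskregion_x\cap\riskregion_{x'}\subseteq\riskregion_{\mathcal{X}}$ and $\riskregion_{x'}\setminus\riskregion_x\subseteq\riskregion_{x'}\subseteq\riskregion_{\mathcal{X}}$, so their interiors are contained in $\interior{\riskregion_{\mathcal{X}}}$.

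The next step is the intermediate-value argument. Since $U$ is open and connected in $\rd$ it is path connected, so there is a continuous path $\gamma:[0,1]\to U$ with $\gamma(0)=y_1$, $\gamma(1)=y_2$. The map $h(t):= f(x,\gamma(t))$ is continuous by condition (ii), satisfies $h(0)\geq F_x^{-1}(\beta)$ and $h(1) < F_x^{-1}(\beta)$, and the open interval $(\max\{z', f(x,y_2)\},\, F_x^{-1}(\beta))$ is nonempty because both $z'$ and $f(x,y_2)$ are strictly below $F_x^{-1}(\beta)$. Picking any $c$ in this interval, the intermediate value theorem gives $t^*\in (0,1)$ with $h(t^*)=c$, and I set $y^* := \gamma(t^*)\in U$, so that $z' < f(x,y^*) < F_x^{-1}(\beta)$.

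Finally, continuity of $f(x,\cdot)$ together with openness of $\interior{\riskregion_{\mathcal{X}}}$ yields an open neighborhood $V\ni y^*$ on which $z' < f(x,\cdot) < F_x^{-1}(\beta)$ and $V\subseteq \interior{\riskregion_{\mathcal{X}}}$; thus $V\subseteq A$. Since $y^*\in\interior{\mathcal{Y}}$ and $V$ is an open neighborhood of $y^*$, the definition of support forces $\Prob{Y\in V} > 0$, completing the argument. The only genuinely delicate point I anticipate is the interaction between the three hypotheses, namely making sure the path $\gamma$ stays inside $\interior{\riskregion_{\mathcal{X}}}$ (which is why connectedness is imposed on $U$ rather than just on $\interior{\riskregion_{\mathcal{X}}}$) and verifying that $y_1,y_2$ can indeed be selected simultaneously in $\interior{\mathcal{Y}}$ and in $\interior{\riskregion_{\mathcal{X}}}$; once this bookkeeping is in place, the IVT step is routine.
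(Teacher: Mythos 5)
Your proof is correct and follows essentially the same route as the paper's: extract the two points from condition (iii), connect them by a path in $\interior{\mathcal{Y}}\cap\interior{\riskregion_{\mathcal{X}}}$, apply the intermediate value theorem to land strictly between $z'$ and $F_x^{-1}(\beta)$, and conclude positive probability from the resulting nonempty open subset of the support. Your version is marginally more explicit about why the target interval for the IVT is nonempty and about the open neighbourhood $V$, but the argument is the same.
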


\begin{proof}
  Fix $x\in\mathcal{X}$ and $\theta' < F_{x}^{-1}(\beta)$. Pick $x'\in \mathcal{X}$ such that
  \eqref{eq:set-intersect} holds. Also, let ${\orv_0 \in \interior{\rvsup}\cap\interior{\riskregion_{x'}\setminus \riskregion_{x}}}$
  and ${\orv_1\in \interior{\rvsup}\cap \interior{\riskregion_{x}\cap\riskregion_{x'}}}$. Since ${\interior{\rvsup}\cap\interior{\riskregion_{\mathcal{X}}}}$ is connected
  there exists a continuous path from $\orv_{0}$ to $\orv_{1}$. That is, there exists a continuous function
  ${\gamma: [0,1] \rightarrow \interior{\rvsup}\cap\interior{\riskregion_{\mathcal{X}}}}$
  such that $\gamma(0) = \orv_0$ and $\gamma(1) = \orv_1$. Now, ${f(x,\orv_{0}) <  F_{x}^{-1}(\beta)}$ and ${f(x,\orv_1) \geq F_{x}^{-1}(\beta)}$ and
  so given that ${t \mapsto f(x, \gamma(t))}$ is continuous there must exist ${0< t < 1}$ such that
  ${\theta' < f(x,\gamma(t)) < F_{x}^{-1}(\beta)}$. That is,
  $$\interior{\rvsup}\cap \interior{\riskregion_{\mathcal{X}}} \cap \{\orv\in\rvsup: \theta' < f(x,\orv) < F_{x}^{-1}(\beta)\} \neq \emptyset.$$
  This is a non-empty open set contained in the support of $\rv$
  and so has positive probability, hence the aggregation condition holds for $\riskregion_{\mathcal{X}}$.
\qed
\end{proof}

The following proposition gives a condition under which the non-risk region
is convex.  
\begin{proposition}
  \label{prop:tailrisk-convex-non-risk-region}
  Suppose that for each $x\in\mathcal{X}$ the function
  $\orv \mapsto f(x,\orv)$ is convex. Then, the non-risk region $\riskregion_{\mathcal{X}}^{c}$ is convex.
\end{proposition}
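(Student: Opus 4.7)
The plan is to unfold the definitions and reduce the statement to two elementary facts from convex analysis: sublevel sets of convex functions are convex, and arbitrary intersections of convex sets are convex.

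First, I would invoke the dual description of the non-risk region given in \eqref{eq:tailrisk-non-risk-region}, namely
\[
\riskregion_{\mathcal{X}}^{c} \;=\; \bigcap_{x\in\mathcal{X}} \riskregion_x^{c},
\]
which is immediate from De Morgan's law applied to \eqref{eq:tailrisk-def-risk-region-2}. This reduces the problem to showing that each individual set $\riskregion_x^{c}$ is convex, since the intersection of an arbitrary family of convex sets is convex.

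Next, from \eqref{eq:tailrisk-def-risk-region} I would write
\[
\riskregion_x^{c} \;=\; \bigl\{ y \in \rd : f(x,y) < F_{x}^{-1}(\beta) \bigr\},
\]
so that $\riskregion_x^c$ is precisely the strict sublevel set of the function $y \mapsto f(x,y)$ at level $F_x^{-1}(\beta)$. By hypothesis this function is convex, and a standard result in convex analysis guarantees that every (strict or non-strict) sublevel set of a convex function is convex: if $y_1, y_2$ lie in the set and $\lambda \in [0,1]$, then convexity gives $f(x, \lambda y_1 + (1-\lambda) y_2) \leq \lambda f(x,y_1) + (1-\lambda) f(x,y_2) < F_x^{-1}(\beta)$, so the convex combination lies in $\riskregion_x^c$ as well. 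Combining these observations yields the desired convexity of $\riskregion_{\mathcal{X}}^{c}$.

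There is no real obstacle here; the result is essentially a one-line consequence of standard convex analysis once the non-risk region is expressed as an intersection of sublevel sets. The only subtlety worth mentioning is that we are using the strict sublevel set rather than the non-strict one, but convexity holds for both without additional assumptions, and no continuity or lower-semicontinuity hypothesis on $f(x,\cdot)$ is needed for this particular conclusion.
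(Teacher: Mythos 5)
Your proposal is correct and follows exactly the same route as the paper's proof: express $\riskregion_{\mathcal{X}}^{c}$ as the intersection $\bigcap_{x\in\mathcal{X}}\riskregion_{x}^{c}$ of strict sublevel sets of the convex functions $y\mapsto f(x,y)$, note each is convex, and conclude by closure of convexity under arbitrary intersection. The only difference is that you spell out the sublevel-set convexity argument, which the paper takes as standard.
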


\begin{proof}
  For $x\in\mathcal{X}$, if $\orv\mapsto f(x,\orv)$ is convex then the set ${\riskregion_{x}^{c} = \{\orv\in\rvsup: f(x,\orv) < F_{x}^{-1}(\beta)\}}$ must be convex.
  The intersection of convex sets is convex, hence $\riskregion_{\mathcal{X}}^{c} = \bigcap_{x\in\mathcal{X}}\riskregion_{x}^{c}$ is convex.
\qed
\end{proof}

This convexity condition is held by a large class of stochastic programs.
Two-stage stochastic linear programs have loss functions of the following general form:
\begin{equation*}
  Q(x, \rv) = \min_{y} \{\bm{q}^{T}y | \bm{W}y = \bm{h} - \bm{T} x,\ y\geq 0\}
\end{equation*}
where $\bm{q}, y\in \rr^{r}$, $\bm{h}\in\rr^{t}$, $\bm{W}\in\rr^{t\times r}$ and $\bm{T}\in\rr^{t\times k}$,
and $\rv$ is the concatenation of all the stochastic components of the problem; that is,
$\rv^{T} = \left(\bm{q}^{T},\bm{h}^{T},\bm{T}_{1},\ldots,\bm{T}_{t},\bm{W}_{1},\ldots,\bm{W}_{t}\right)$
where $\bm{T}_{i}$ and $\bm{W}_{i}$ denote the $i$-th rows of the matrices $\bm{T}$ and $\bm{W}$
respectively. Standard results in stochastic programming guarantee
that $\orv \mapsto Q(x, \orv)$ is convex if the only random components
of the problem are $\bm{h}$ and $\bm{T}$, that is if $\orv^{T} = (h^{T}, T_{1},\ldots,T_{t})$. See 
for instance \cite[Chapter~3, Theorem~2]{BirgeLouv97}.

The random vector in the following definition plays a special role in our theory.

\begin{definition}[Aggregated random vector]
  \label{def:tailrisk-agg-rv}
  For some set $\riskregion_{\mathcal{X}}\subseteq\riskregion\subset\rvsup$ 
  the \emph{aggregated random vector} is defined as follows:
\begin{equation*}
  \psi_{\riskregion}(\rv) := \begin{cases} \rv &\text{if } \rv \in \riskregion,\\
    \E{\ \rv | \rv\in\riskregion^c\ } & \text{otherwise.} \end{cases}
\end{equation*}
\end{definition}

If $\riskregion$ satisfies the aggregation condition and $\E{\rv|\rv\in\riskregion^{c}}\in\riskregion_{\mathcal{X}}^{c}$ then Theorem \ref{thr:equiv-rv} guarantees that 
${\rho_{\beta}\left(f\left(x,\psi_{\riskregion}(\rv)\right)\right) = \rho_{\beta}\left(f\left(x,\rv\right)\right)}$ for all $x\in\mathcal{X}$. The latter condition holds, for example,
if $\orv \mapsto f(x, \orv)$ is convex for all $x\in\mathcal{X}$, since by Proposition~\ref{prop:tailrisk-convex-non-risk-region} we have that $\riskregion^{c}_{\mathcal{X}}$ is convex and also
$\riskregion^{c}\subseteq\riskregion_{\mathcal{X}}^{c}$.
Under these conditions, as well as preserving the value of the tail risk measure,
the function $\psi_{\riskregion}$ will also preserve the expectation
for affine loss functions.

\begin{corollary}
  \label{cor:explicit-problems}
  Suppose for each $x\in\mathcal{X}$ the function $\orv \mapsto f(x,\orv)$ 
  is affine and for a set $\riskregion\subset \rvsup$ satisfying the aggregation condition
  we have that $\E{\rv|\rv\in \riskregion^{c}} \in \riskregion^{c}$. Then,
  \begin{equation*}
    \rho_{\beta}\left(f\left(x,\psi_{\riskregion}(\rv)\right)\right) = \rho_{\beta}\left(f\left(x,\rv\right)\right) \text{ and }
    \ \E{f\left(x, \psi_{\riskregion}\left(\rv\right)\right)} = \E{f(x,\rv)} \text{ for all } x\in\mathcal{X}.
  \end{equation*}
\end{corollary}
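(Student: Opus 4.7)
The plan is to treat the two identities separately, since they concern different aggregation maps and rely on different facts. For the tail-risk identity, I would appeal directly to Theorem~\ref{thr:equiv-rv}. To apply it I must check condition~\eqref{eq:agg-condition-2}: that $\psi_{\riskregion}(Y)$ and $Y$ agree in distribution on measurable subsets of $\riskregion$. By Definition~\ref{def:tailrisk-agg-rv}, $\psi_{\riskregion}(Y) = Y$ on the event $\{Y \in \riskregion\}$, while on $\{Y \in \riskregion^c\}$ it takes the single deterministic value $c := \conE{Y}{Y \in \riskregion^c}$, which by hypothesis lies in $\riskregion^c$. Consequently, for any measurable $\mathcal{A} \subseteq \riskregion$, the event $\{\psi_{\riskregion}(Y) \in \mathcal{A}\}$ coincides with $\{Y \in \mathcal{A}\}$, giving $\Prob{\psi_{\riskregion}(Y) \in \mathcal{A}} = \Prob{Y \in \mathcal{A}}$. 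Combined with the standing assumption that $\riskregion$ satisfies the aggregation condition, Theorem~\ref{thr:equiv-rv} then yields the first claim.

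For the expectation identity I would use affinity to write $f(x, y) = a(x) + \inprod{b(x)}{y}$, so by linearity of expectation the statement reduces to the coordinatewise identity $\E{\psi_{\riskregion^c}(Y)} = \E{Y}$. Here $\psi_{\riskregion^c}$ is the aggregated random vector obtained from the formula of Definition~\ref{def:tailrisk-agg-rv} with $\riskregion^c$ in place of $\riskregion$: it equals $Y$ on $\riskregion^c$ and replaces $Y$ by $\conE{Y}{Y \in \riskregion}$ on $\riskregion$. A one-line computation via the tower property then gives
\[
\E{\psi_{\riskregion^c}(Y)} = \E{Y\, \id{\{Y \in \riskregion^c\}}} + \conE{Y}{Y \in \riskregion}\,\Prob{Y \in \riskregion} = \E{Y\, \id{\{Y \in \riskregion^c\}}} + \E{Y\, \id{\{Y \in \riskregion\}}} = \E{Y}.
\]

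There is essentially no hard step: once Theorem~\ref{thr:equiv-rv} has been established, the first identity is immediate from the hypothesis $c \in \riskregion^c$, and the second is a direct tower-property calculation that does not even invoke the aggregation condition. The only bookkeeping subtlety is that Definition~\ref{def:tailrisk-agg-rv} is phrased for sets satisfying the aggregation condition, whereas $\psi_{\riskregion^c}$ is used here even though $\riskregion^c$ typically does not; the defining formula is, however, well-posed whenever $\Prob{Y \in \riskregion} > 0$, and the remaining degenerate case $\Prob{Y \in \riskregion} = 0$ is trivial, since then $\psi_{\riskregion^c}(Y) = Y$ almost surely and both sides of the expectation identity coincide.
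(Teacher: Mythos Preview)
Your proof is correct and follows essentially the same route as the paper's: invoke Theorem~\ref{thr:equiv-rv} for the tail-risk identity, and use the tower property plus affinity for the expectation identity. You are in fact more careful than the paper in spelling out why condition~\eqref{eq:agg-condition-2} holds for $\tilde{Y}=\psi_{\riskregion}(Y)$; the paper simply asserts that the first equality ``follows immediately'' from Theorem~\ref{thr:equiv-rv}.

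One remark worth making: the appearance of $\psi_{\riskregion^{c}}$ in the expectation claim is almost certainly a typo for $\psi_{\riskregion}$, and indeed the paper's own proof silently computes $\E{\psi_{\riskregion}(Y)}=\E{Y}$ rather than $\E{\psi_{\riskregion^{c}}(Y)}$. You took the subscript literally and proved the statement as written, correctly noting that Definition~\ref{def:tailrisk-agg-rv} is formally stated only for sets satisfying the aggregation condition. Either reading is fine, since the tower-property calculation is symmetric in $\riskregion$ and $\riskregion^{c}$, but in context the intended object is $\psi_{\riskregion}$.
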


\begin{proof}
  The equality of the tail-risk measures follows immediately from Theorem \ref{thr:equiv-rv}.
  For the expectation function we have
  \begin{align*}
    \E{\psi_{\riskregion}(\rv)} &= \Prob{\rv\in\riskregion}\E{\psi_{\riskregion}(\rv)| \rv\in\riskregion} + \Prob{\rv\in\riskregion^{c}}\E{\psi_{\riskregion}(\rv)|\rv\in\riskregion^{c}}\\
    &=\Prob{\rv\in\riskregion}\E{\rv| \rv\in\riskregion} + \Prob{\rv\in\riskregion^{c}}\E{\rv|\rv\in\riskregion^{c}}=\E{\rv}.
  \end{align*}
  Since $\orv\mapsto f(x,\orv)$ is affine this means that
  \begin{equation*}
    \E{f(x,\psi_{\riskregion}(\rv))} = f(x, \E{\psi_{\riskregion}(\rv)}) = f(x,\E{\rv}) = \E{f(x,\rv)}.
  \end{equation*}
\qed
\end{proof}

\section{Scenario generation}
\label{sec:tailrisk-aggsampling}

In the previous section, we showed that under mild conditions the value of a tail risk measure only
depends on the distribution of outcomes in the risk region. In this section
we demonstrate how this feature may be exploited for
the purposes of scenario generation.

We assume throughout this section that our scenario sets are
constructed from some underlying probabilistic model from which we can
draw independent identically distributed samples. We also assume we have a set $\riskregion_{\mathcal{X}}\subseteq \riskregion\subset\rvsup$ which satisfies
the aggregation condition for the problem under consideration, and for which we can easily test membership. The
set $\riskregion$ may be an \emph{exact risk region}, that is $\riskregion=\riskregion_{\mathcal{X}}$, or it could
a \emph{conservative risk region}, that is $\riskregion \supset \riskregion_{\mathcal{X}}$. To avoid repeating cumbersome
terminology, we simply refer to $\riskregion$ as a risk region, differentiating
between the conservative and exact cases only where necessary. The complement $\riskregion^{c}$ will be referred to as the \emph{aggregation region} for reasons which will become clear. Our general approach to scenario generation is to prioritize the construction of scenarios in the risk region $\mathcal{R}$.

In Section~\ref{sec:tailrisk-aggr-sampl-reduct} we present and analyse a scenario generation method which we call aggregation sampling. In Section~\ref{sec:tailrisk-scengen-alt}
we briefly discuss alternative ways of exploiting risk regions for scenario generation.

\subsection{Aggregation sampling}
\label{sec:tailrisk-aggr-sampl-reduct}

In \emph{aggregation sampling} the user specifies a number of
\emph{risk scenarios}, that is, the number of scenarios to represent the risk region. The algorithm then draws samples from the distribution, storing those samples which lie in the risk region and aggregating those in the aggregation region into a single point. In particular, the samples in the aggregation region are aggregated into their mean. The algorithm terminates when the specified number of risk scenarios has been reached. This is detailed in Algorithm~\ref{alg:agg-sampling}. 
\begin{algorithm}
  \textnormal{}\SetKwData{Left}{left}\SetKwData{This}{this}\SetKwData{Up}{up}
  \SetKwFunction{Union}{Union}\SetKwFunction{FindCompress}{FindCompress}
  \SetKwInOut{Input}{input}\SetKwInOut{Output}{output}
  
  \Input{$\riskregion \subset \rvsup$ set satisfying aggregation condition, $N_{\riskregion}$ number of required risk scenarios}
  \Output{$\{(\orv_s, p_s)\}_{s=1}^{N_{\riskregion} + 1}$ scenario set}
    $n_{\riskregion^c} \leftarrow 0$,\  $n_{\riskregion} \leftarrow 0$,\ $\orv_{\riskregion^{c}}=\mathbf{0}$\;
  \While{$n_{\riskregion} < N_{\riskregion}$} {
    Sample new point $\orv$\;
    \If{$\orv\in\riskregion$}{
      $\orv_{n_{\riskregion}+1} \leftarrow \orv$ ; $n_{\riskregion} \leftarrow n_{\riskregion} + 1 $\;
    }
    \Else{
      $\orv_{\riskregion^{c}} \leftarrow \frac{1}{n_{\riskregion^{c}}+1}\left(n_{\riskregion^{c}}\orv_{\riskregion^{c}} + \orv\right) $;
      $n_{\riskregion^c} \leftarrow n_{\riskregion^c} + 1$
    }
  }
  \If{$n_{\riskregion^c} > 0$}{
    $\orv_{N_{\mathcal{R}}+1} \leftarrow \orv_{\mathcal{\riskregion}^{c}}$\;
  }
  \Else{
    Sample new point $\orv$\;
    $n_{\riskregion^c} \leftarrow 1$;
    $\orv_{N_{\riskregion} + 1} \leftarrow \orv$\;
  }
  \lForEach{$i$ in $1, \ldots, N_{\riskregion}$}{$p_i \leftarrow \frac{1}{n_{\riskregion^c} + N_{\riskregion}}$}
  $p_{N_{\riskregion}+1} \leftarrow \frac{n_{\riskregion^c}}{n_{\riskregion^c} + N_{\riskregion}}$
   \caption{Aggregation sampling}
  \label{alg:agg-sampling}
\end{algorithm}

Aggregation sampling can be thought of as
equivalent to sampling from the aggregated random vector from
Definition~\ref{def:tailrisk-agg-rv} for large sample
sizes. Aggregation sampling is thus consistent with standard Monte Carlo sampling 
only if $\riskregion$ satisfies the aggregation
condition and ${\E{\rv|\rv\in\riskregion^{c}}\in\riskregion^{c}}$.
In Section~\ref{sec:tailrisk-conv-aggr-sampl}, we provide conditions under which we can prove consistency.
Note that it is possible that
the algorithm could terminate without sampling
any scenario in the aggregation region. This
could happen in cases where $\Prob{\rv\in\riskregion^{c}}$
is very small, and
the number of specified risk scenarios $n$ is relatively
small. In this case, to ensure that the algorithm terminates
in a reasonable amount of time and that the scenario
set which the algorithm outputs always has a consistent
number of scenarios, we sample an arbitrary scenario
in place of a scenario representing the aggregated
scenarios. This situation is irrelevant for the
asymptotic analysis of the algorithm.

We now study the performance of our aggregation sampling algorithm. Let
$a=\Prob{\rv\in\mathcal{R}^{c}}$ be the probability of the aggregation region, and $n$ the desired number
of risk scenarios. Let $N(n)$ denote the \emph{effective sample size}
for aggregation sampling, that is, the number of samples drawn until
the algorithm terminates\footnotemark.
\footnotetext{For simplicity of exposition we discount the event that the while loop of the algorithm terminates with $n_{\riskregion^{c}} = 0$ which occurs with probability $(1-a)^{n}$} 
The aggregation sampling algorithm can be viewed as a sequence of
Bernoulli trials where a trial is a success if the corresponding
sample lies in the aggregation region, and which terminates once we have
reached $n$ failures, that is, once we have sampled $n$ scenarios from
the risk region. We can therefore write down the distribution of
$N(n)$:
\begin{equation*}
  N(n) \sim n + \mathcal{NB}(n, a), 
\end{equation*}
where $\mathcal{NB}(n,a)$ denotes a \emph{negative binomial} random variable whose probability mass function
is as follows:
\begin{equation*}
  \binom{k+n-1}{k}(1-a)^{n}a^{k},\qquad k\geq 0.
\end{equation*}
The expected effective sample size of aggregation
sampling is thus:
\begin{equation}
  \label{eq:exp-eff-sample-size}
  \E{N(n)} = n + n \frac{a}{1-a}.
\end{equation}

The expected effective sample size $N(n)$ can be thought of as the
required sample size to construct a scenario set via Monte Carlo
sampling with $n$ scenarios in the risk region $\riskregion$.  Thus,
the greater the expected effective sample size, the greater the
benefit of using aggregation sampling over standard Monte Carlo
sampling. From \eqref{eq:exp-eff-sample-size} we can see that the
expected effective sample size increases as the probability $a$ of the
aggregation region increases. Therefore, when constructing a risk
region $\riskregion\supseteq\riskregion_{\mathcal{X}}$ for the
purposes of scenario generation, it is important that $\mathcal{R}$ is
as tight an approximation of the exact risk region
$\mathcal{R}_{\mathcal{X}}$ as possible in order that $a =
\Prob{\rv\in\riskregion^{c}}$ is as large as possible. Also, the fact that
the advantage of using aggregation sampling over standard Monte Carlo
sample improves as the probability of the risk region increases, also
tells us that this methodology will potentially work better for
problems with higher values of $\beta$ and which are more constrained
due to the relations \eqref{eq:tailrisk-beta-increase} and
\eqref{eq:tailrisk-X-increase}.

\subsection{Alternative approaches}
\label{sec:tailrisk-scengen-alt}

\paragraph{Aggregation reduction}

In aggregation reduction one draws a
fixed number of samples $n$ from the distribution and then aggregates all
those in the aggregation region. As opposed to aggregation sampling, this
method uses a fixed number of samples, but constructs a scenario set with
a random number of scenarios.
Let $R(n)$ denote the number of scenarios which are aggregated
in the aggregation reduction method. Aggregation reduction can similarly be viewed as a sequence of $n$
Bernoulli trials, where success and failure are defined in the same
way as described above. The number of aggregated scenarios in aggregation
reduction is therefore distributed as follows:
\begin{equation*}
  R(n) \sim \mathcal{B}(n, a)
\end{equation*}
where $\mathcal{B}(n,a)$ denotes a binomial random variable and so we have
\begin{equation}
  \label{eq:exp-reduction}
  \E{R(n)} = na.
\end{equation}
Again, the performance of this method, in terms of the expected number
of aggregated scenarios, can be seen to improve as the probability of the aggregation
region increases.

\paragraph{Alternative sampling methods}
The above algorithms and analyses assume that the samples of $\rv$
were identically, independently distributed. However, in principle
the algorithms will work for any unbiased sequence of samples. This
opens up the possibility of enhancing the scenario aggregation and reduction
algorithms by using them in conjunction with variance reduction techniques such as importance sampling, or antithetic sampling \cite{higle1998variance}\footnotemark.
The formulae \eqref{eq:exp-eff-sample-size} and \eqref{eq:exp-reduction}
will still hold, but $a$ will be the probability of a \emph{sample} occuring in
the aggregation region rather than the actual probability of the aggregation region itself.
\footnotetext{Batch sampling methods such as stratified sampling will not
work with aggregation sampling which requires samples to be drawn sequentially.}

\paragraph{Alternative representations of the aggregation region}
The above algorithms can also be generalized in how
they represent the non-risk region. Because aggregation sampling and
aggregating reduction only represent the non-risk region with a single
scenario, they do not in general preserve the overall expectation of
the loss function, or any other statistics of the loss function except for the value of a
tail risk measure.  These algorithms should therefore
generally only be used for problems which only involve tail
risk measures. However, if the loss function is affine (in the sense
of Corollary~\ref{cor:explicit-problems}), then collapsing all points
in the non-risk region to the conditional expectation preserves the
overall expectation.

If expectation or any other statistic of the cost function is used
in the optimization problem then one could represent
the non-risk region region with many scenarios. For example, 
instead of aggregating all scenarios in the non-risk region into a single point
we could apply a clustering algorithm to them such as $k$-means. 
The ideal allocation of points between the risk and non-risk regions
will be problem dependent and is beyond the scope of this paper.

\section{Consistency of aggregation sampling}
\label{sec:tailrisk-conv-aggr-sampl}

The reason that aggregation sampling and aggregation reduction
work is that, for large sample sizes, they are equivalent to sampling
from the aggregated random vector, and if the aggregation
condition holds then the aggregated random vector yields the same
optimization problem as the original random vector. We only prove
consistency for aggregation sampling and not aggregation reduction
as the proofs are very similar. Essentially, the only difference
is that aggregation sampling has the additional complication
of terminating after a random number of samples.

We suppose in this section that we have a sequence
of independently identically distributed (i.i.d.) random vectors $\rv_{1}, \rv_{2}, \ldots$ with the same
distribution as $\rv$, and which are defined on the product probability
space $\Omega^{\infty}$.

\subsection{Uniform convergence of empirical $\beta$-quantiles}
\label{sec:tailrisk-unif-conv-empir}

The i.i.d. sequence of random vectors $\rv_{1}, \rv_{2},\ldots$ can be used to estimate
the distribution and quantile functions of $\rv$. We introduce the additional
short-hand for the empirical distribution and quantile functions:
 \begin{equation*}
  F_{n,x}(\theta) := \frac{1}{n}\sum_{i=1}^n \mathbbm{1}_{\{\orv\in\rvsup: f(x,\orv) \leq \theta\}}(\rv_{i})\ \text{ and }\  F_{n,x}^{-1}(u) := \inf\{\theta\in\rr:\ F_{n,x}(\theta) \geq u\}.
\end{equation*}
Note that these are random-valued functions on the probability space $\Omega^{\infty}$.
It is immediate from the strong law of large numbers that for
all $\bar{x}\in\mathcal{X}$ and $\theta\in\rr$, we have $F_{n,\bar{x}}(\theta) \conwpo F_{\bar{x}}(\theta)$ as $n\to\infty$. In addition, if 
$F_{\bar{x}}$ is strictly increasing at $\theta=F_{\bar{x}}^{-1}(\beta)$, that is for all $\epsilon > 0$
\begin{equation*}
  F_{\bar{x}}\left(F_{\bar{x}}^{-1}(\beta) - \epsilon\right) < \beta < F_{\bar{x}}\left(F_{\bar{x}}^{-1}(\beta) + \epsilon\right).
\end{equation*}
then we also have $F_{n, \bar{x}}^{-1}(\beta)\conwpo F_{\bar{x}}^{-1}(\beta)$ as $n\to\infty$;
see for instance \cite[Chapter 2]{serfling1980approximation}.
The following result extends this pointwise convergence
to a convergence which is uniform with
respect to $x\in\mathcal{X}$.

\begin{theorem}
  \label{thm:tailrisk-conv-unif-quant}
  Suppose the following hold:
  \begin{enumerate}[(i)]
  \item For each $x\in\mathcal{X}$, $F_{x}$ is strictly increasing and continuous at $F_{x}^{-1}(\beta)$,
  \item For all $\bar{x}\in\mathcal{X}$ with probability 1 the mapping $x\mapsto f(x,\rv)$ is continuous at $\bar{x}$,
  \item $\mathcal{X} \subset \rk$ is compact.
   \end{enumerate}
   Then, with probability 1
   \begin{equation*}
     \lim_{n\rightarrow\infty} \sup_{x\in\mathcal{X}} \left|F_{n,x}^{-1}(\beta) - F_{x}^{-1}(\beta)\right| = 0.
   \end{equation*}
\end{theorem}

The proof of this result relies on various continuity
properties of the distribution and quantile functions which are 
provided in Appendix \ref{sec:tailrisk-convergence-results}. Some elements
of the proof below have been adapted from
\cite[Theorem~7.48]{ShapiroEA09}, a result which concerns the uniform
convergence of expectation functions.
\begin{proof}
  Fix $\epsilon_{0} > 0$ and $\bar{x}\in\mathcal{X}$. 
  Since $F_{\bar{x}}$ is right-continuous with left limits, it has only countably many discontinuities, and so there exists $0 <\epsilon < \epsilon_{0}$ such that 
  $F_{\bar{x}}$ is continuous at $F_{\bar{x}}^{-1}(\beta) \pm \epsilon$. Since $F_{\bar{x}}$ is strictly increasing at $F_{\bar{x}}^{-1}(\beta)$, 
  \begin{equation}
    \label{eq:thm2-delta-def}
    \delta := \min \{ \beta - F_{\bar{x}}\left(F_{\bar{x}}^{-1}(\beta) - \epsilon\right),\ F_{\bar{x}}\left(F_{\bar{x}}^{-1}(\beta) + \epsilon\right) - \beta \} > 0.
  \end{equation}
  By Corollary~\ref{cor:tailrisk-prob-cont} in Appendix~\ref{sec:tailrisk-convergence-results} the mapping
  $x\mapsto F_{x}\left(F_{\bar{x}}^{-1}(\beta) - \epsilon\right)$ is continuous at $\bar{x}$. 
  Applying Lemma~\ref{lem:tailrisk-conv-cont} in Appendix~\ref{sec:tailrisk-convergence-results}, there exists a neighborhood $W$ of $\bar{x}$
  such that with probability 1
  \begin{equation*}
    \limsup_{n\rightarrow \infty}\sup_{x\in W\cap\mathcal{X}} \left| F_{n,x}(F_{\bar{x}}^{-1}(\beta) - \epsilon) - F_{n,\bar{x}}(F_{\bar{x}}^{-1}(\beta) - \epsilon)\ \right| < \delta.
  \end{equation*}
  In addition, by the strong law of large numbers, with probability 1
  \begin{equation}
    \lim_{n\rightarrow\infty} \left|F_{n, \bar{x}}\left(F^{-1}_{\bar{x}}(\beta) - \epsilon\right) - F_{\bar{x}}\left(F^{-1}_{\bar{x}}(\beta) - \epsilon\right) \right| = 0.
  \end{equation}
  Note that for all $x\in W\cap\mathcal{X}$
  \begin{align*}
    \left|F_{n, x}\left(F^{-1}_{\bar{x}}(\beta) - \epsilon\right) - F_{\bar{x}}\left(F^{-1}_{\bar{x}}(\beta) - \epsilon\right) \right| & \leq \left| F_{n,x}(F_{\bar{x}}^{-1}(\beta) - \epsilon) - F_{n,\bar{x}}(F_{\bar{x}}^{-1}(\beta) - \epsilon)\ \right| \\
    & \qquad + \left|F_{n, \bar{x}}\left(F^{-1}_{\bar{x}}(\beta) - \epsilon\right) - F_{\bar{x}}\left(F^{-1}_{\bar{x}}(\beta) - \epsilon\right) \right|.
  \end{align*}
  Thus, with probability 1
  \begin{align}
   & \limsup_{n\rightarrow\infty} \sup_{x\in W\cap\mathcal{X}} \left|F_{n, x}\left(F^{-1}_{\bar{x}}(\beta) - \epsilon\right) - F_{\bar{x}}\left(F^{-1}_{\bar{x}}(\beta) - \epsilon\right) \right| \nonumber \\
   \leq & \limsup_{n\rightarrow \infty} \sup_{x\in W\cap\mathcal{X}} \left| F_{n,x}(F_{\bar{x}}^{-1}(\beta) - \epsilon) - F_{n,\bar{x}}(F_{\bar{x}}^{-1}(\beta) - \epsilon)\ \right| \nonumber \\
    & \ \ + \limsup_{n\rightarrow\infty} \left|F_{n, \bar{x}}\left(F^{-1}_{\bar{x}}(\beta) - \epsilon\right) - F_{\bar{x}}\left(F^{-1}_{\bar{x}}(\beta) - \epsilon\right) \right| \nonumber \\
   < &  \delta + 0 = \delta \label{eq:thm-limsup-minus-eps}.
  \end{align}
  Similarly, we can choose $W$ such that with probability 1
  \begin{align}
    \label{eq:thm-limsup-plus-eps}
    \limsup_{n\rightarrow\infty} \sup_{x\in W\cap\mathcal{X}}\left|F_{n, x}\left(F^{-1}_{\bar{x}}(\beta) + \epsilon\right) - F_{\bar{x}}\left(F^{-1}_{\bar{x}}(\beta) + \epsilon\right) \right| < \delta.
  \end{align}
  Using \eqref{eq:thm2-delta-def}, \eqref{eq:thm-limsup-minus-eps} and \eqref{eq:thm-limsup-plus-eps} we can conclude that for all $x\in W\cap\mathcal{X}$ with probability 1
  \begin{equation*}
    \limsup_{n\rightarrow\infty} F_{n, x}\left(F^{-1}_{\bar{x}}(\beta) - \epsilon\right) < \beta < \liminf_{n\rightarrow\infty} F_{n,x}\left(F^{-1}_{\bar{x}}(\beta) + \epsilon\right).
  \end{equation*}
  Hence, we have that for all $x\in W\cap\mathcal{X}$, with probability 1, there exists $N$ such that for all $n > N$
  \begin{equation*}
    F^{-1}_{\bar{x}}(\beta) - \epsilon <  F_{n, x}^{-1}(\beta) \leq F^{-1}_{\bar{x}}(\beta) + \epsilon,
  \end{equation*}
  and so we can conclude that
  \begin{equation}
    \label{eq:tailrisk-sample-quantile-ineq-1}
    \limsup_{n\rightarrow\infty} \sup_{x\in W\cap\mathcal{X}}\left| F_{n,x}^{-1}(\beta) - F_{\bar{x}}^{-1}(\beta) \right| \leq \epsilon < \epsilon_{0}.
  \end{equation}
  Also, by Proposition~\ref{prop:tailrisk-quant-cont} in Appendix~\ref{sec:tailrisk-convergence-results} the function $x\mapsto F^{-1}_{x}(\beta)$
  is continuous and so the neighborhood $W$ can also be chosen so that
  \begin{equation}
    \label{eq:tailrisk-nh-ineq-2}
    \sup_{x\in W\cap \mathcal{X}} \left| F^{-1}_{\bar{x}}(\beta) - F^{-1}_{x}(\beta)\right| < \epsilon_{0},
  \end{equation}
  and so combining \eqref{eq:tailrisk-sample-quantile-ineq-1} and \eqref{eq:tailrisk-nh-ineq-2} we have that with probability 1
  \begin{equation*}
    \limsup_{n\rightarrow\infty} \sup_{x\in W\cap\mathcal{X}} \left| F^{-1}_{n, x}(\beta) -  F^{-1}_{x}(\beta) \right| < 2\epsilon_{0}.
  \end{equation*}
  Finally, since $\mathcal{X}$ is compact, there exists a finite number of points $x_1, \ldots, x_m \in \mathcal{X}$ 
  with corresponding neighborhoods $W_1, \ldots, W_m $ covering $\mathcal{X}$, such that with probability 1, the following holds:
  \begin{equation*}
        \limsup_{n\rightarrow\infty}\sup_{x\in W_j\cap \mathcal{X}}\left| F^{-1}_{n, x}(\beta) - F^{-1}_{x}(\beta)\right| < 2\epsilon_{0} \qquad \text{for } i = 1, \ldots, m
  \end{equation*}
  that is, with probability 1,
  \begin{equation*}
        \limsup_{n\rightarrow\infty} \sup_{x\in\mathcal{X}}\left| F^{-1}_{n, x}(\beta) - F^{-1}_{x}(\beta)\right| < 2\epsilon_{0}.
  \end{equation*}
  Since the choice of $\epsilon_{0}$ was arbitrary the result follows.
\qed
\end{proof}

To facilitate the statement and proofs of the following results we introduce the following index sets
which keep track of the indices of the samples which are in the risk and aggregation regions.
\begin{align*}
  \mathcal{I}_{\riskregion}(n) &= \{1\leq j \leq n:\ \rv_{j}\in\riskregion\},\\
  \mathcal{I}_{\riskregion^{c}}(n) &= \{1\leq j \leq n:\ \rv_{j}\in\riskregion^{c}\}.
\end{align*}
The following corollary shows that we have uniform convergence of the $\beta$-quantiles
when sampling from the aggregated random vector $\psi_{\riskregion}(\rv)$. 
In order to state and prove this result, we introduce the following additional notation for the distribution and quantile functions
for $f(x,\psi_{\riskregion}(\rv))$, and their empirical counterparts for the sample $\psi_{\riskregion}(\rv_{1}), \psi_{\riskregion}(\rv_{2}), \ldots$:
\begin{align*}
  \tilde{F}_{x}(\theta) &= \Prob{f(x, \psi_{\riskregion}(\rv)) \leq \theta}\\
  \tilde{F}^{-1}_{x}(u) &= \inf\{\theta\in\rr:\ \tilde{F}_{x}(\theta) \geq u\}\\
  \tilde{F}_{n,x}(\theta) &= \frac{1}{n}\sum_{i=1}^{n}\mathbbm{1}_{\{\orv\in\rvsup:\ f(x, \orv) \leq \theta\}}\left(\psi_{\riskregion}(\rv_{i})\right)\\
  &=\frac{|\mathcal{I}_{\riskregion^{c}}(n)|}{n}\mathbbm{1}_{\{\orv\in\rvsup:\ f(x, \orv) \leq \theta\}}\left(\E{\rv|\rv\in\riskregion^{c}} \right) + \frac{1}{n}\sum_{i\in\mathcal{I}_{\riskregion}(n)} \mathbbm{1}_{\{\orv\in\rvsup:\ f(x, \orv) \leq \theta\}}(\rv_{i})\\
  \tilde{F}^{-1}_{n,x}(u) &= \inf\{\theta\in\rr: \tilde{F}_{n,x}(\theta)\geq u\}
\end{align*}
Like $F_{n,x}$ and $F_{n,x}^{-1}$, the final two functions are random-valued functions on the probability space $\Omega^{\infty}$.

\begin{corollary}
  \label{cor:agg-rv-unif-conv}
  Let $\riskregion_{\mathcal{X}}\subseteq \riskregion \subset \rd$ be a set
  satisfying the aggregation condition, and suppose that conditions (i)-(iii) from Theorem~\ref{thm:tailrisk-conv-unif-quant}
  hold and in addition:
  \begin{enumerate}[(i)]
    \setcounter{enumi}{3}
  \item $\E{\ \rv | \rv\in\riskregion^c\ } \in \interior{\riskregion_{\mathcal{X}}^c}$.
  \item The mapping $x \mapsto f\left(x, \E{\ \rv | \rv\in\riskregion^c\ }\right)$ is continuous.
  \end{enumerate}
  Then with probability 1
  \begin{equation*}
    \lim_{n\rightarrow\infty} \sup_{x\in\mathcal{X}} |\tilde{F}_{n,x}^{-1}(\beta) - F_{x}^{-1}(\beta)| = 0.
  \end{equation*}
\end{corollary}

\begin{proof}
  Since $\riskregion$ satisfies the aggregation condition, and condition (a) holds,
  by Theorem~\ref{thr:equiv-rv}, we have that $\tilde{F}_{x}^{-1}(\beta) = F_{x}^{-1}(\beta)$
  for all $x\in\mathcal{X}$. Therefore, to prove this result, we will
  apply Theorem~\ref{thm:tailrisk-conv-unif-quant} to 
  $f(x, \psi_{\riskregion}(\rv))$ and so must show that conditions (i)-(iii) from Theorem~\ref{thm:tailrisk-conv-unif-quant} also hold for $f(x, \psi_{\riskregion}(\rv))$. Condition (iii) holds immediately, and
  condition (ii) holds for $f(x, \psi_{\riskregion}(\rv))$ since $x\mapsto f(x,\rv)$
  is continuous with probability 1, and 
  $x \mapsto f\left(x, \E{\ \rv | \rv\in\riskregion^c\ }\right)$ is continuous.

  It remains to show that $\tilde{F}_{x}$ is continuous and strictly increasing at $F_{x}^{-1}(\beta)$ for all $x\in\mathcal{X}$. Fix $x\in\mathcal{X}$.
  Since $F_{x}(\theta)$ and $\tilde{F}_{x}(\theta)$ coincide for
  $\theta \geq F_{x}^{-1}(\beta)$ and $F_{x}$ is strictly increasing at $F_{x}^{-1}(\beta)$, we have
  \begin{align*}
    \tilde{F}_{x}\left(F_{x}^{-1}(\beta) + \epsilon\right) &= F_{x}\left(F_{x}^{-1}(\beta) + \epsilon\right)\\
    & > F_{x}(F_{x}^{-1}(\beta)) \\
    & = \tilde{F}_{x}\left(F_{x}^{-1}(\beta)\right)
  \end{align*}
  and so $\tilde{F}_{x}$ is also strictly increasing at $F_{x}^{-1}(\beta)$. Finally, to show
  that $\tilde{F}_{x}$ is continuous at $F_{x}^{-1}(\beta)$, it suffices to show
  that it is left continuous, since all distribution functions are right continuous.
  For $\epsilon > 0$ sufficiently small we have that $f(x,\E{\ \rv | \rv\in\riskregion^c\ })<F_{x}^{-1}(\beta)$, and so
  \begin{align*}
    \tilde{F}_{x}(F_{x}^{-1}(\beta)) - \tilde{F}_{x}(F_{x}^{-1}(\beta) - \epsilon) &= \Prob{F_{x}^{-1}(\beta) - \epsilon < f(x, \psi_{\riskregion}(\rv)) \leq F_{x}^{-1}(\beta)}\\
    &= \Prob{\rv\in\{\orv\in\rvsup: F_{x}^{-1}(\beta) - \epsilon < f(x, \orv) \leq F_{x}^{-1}(\beta)\}\cap\riskregion}\\
    &\leq \Prob{F_{x}^{-1}(\beta) - \epsilon < f(x, \rv) \leq F_{x}^{-1}(\beta)} \\
    &= F_{x}(F_{x}^{-1}(\beta)) - F_{x}(F_{x}^{-1}(\beta) - \epsilon).
  \end{align*}
  Now, since by assumption $F_{x}$ is continuous at $F_{x}^{-1}(\beta)$, we
  have that $\lim_{\epsilon\downarrow 0}\left(F_{x}(F_{x}^{-1}(\beta)) - F_{x}(F_{x}^{-1}(\beta) - \epsilon\right) = 0$,
  and so must also have $\lim_{\epsilon\downarrow 0}\left(\tilde{F}_{x}(F_{x}^{-1}(\beta)) - \tilde{F}_{x}(F_{x}^{-1}(\beta) - \epsilon\right) = 0$ as required.
\qed
\end{proof}

In the next subsection this result will be used to show
that any point in the interior of the non-risk region $\riskregion^{c}$ will, with probability
1, be in the non-risk region of the sampled scenario set as the sample size grows large.

\subsection{Equivalence of aggregation sampling with sampling from aggregated random vector}
\label{sec:tailrisk-aggregation-sampling}

The main obstacle in showing that aggregation sampling is equivalent
to sampling from the aggregated random vector is to show that the
aggregated scenario in the non-risk region converges almost surely to
the conditional expectation of the non-risk region as the number of
specified risk scenarios tends to infinity.  Recall from Section~\ref{sec:tailrisk-aggsampling} 
that $N(n)$ denotes the effective
sample size in aggregation sampling when we require $n$ risk scenarios
and is distributed as $n+\mathcal{NB}(n, a)$ where $a$ is the
probability of the non-risk region. The purpose of the next lemma is
to show that as $n\to\infty$ the number of samples drawn from
the non-risk region almost surely tends to infinity.

\begin{lemma}
  \label{lem-tailrisk-negbin-infinity}
  Suppose $M(n)\sim\mathcal{NB}(n, p)$ where $0<p<1$. Then with probability 1 we have that ${\lim_{n\rightarrow\infty}M(n) = \infty}$.
\end{lemma}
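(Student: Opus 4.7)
The plan is to realize all the $M(n)$ on a common probability space through a single i.i.d.\ Bernoulli sequence, and then reduce the claim to the strong law of large numbers. Specifically, I would let $\xi_{1}, \xi_{2}, \ldots$ be i.i.d.\ Bernoulli$(p)$ random variables, interpret $\xi_{i} = 1$ as a ``success'' (a sample landing in the aggregation region), and define the waiting time $T_{n} := \inf\{k \geq 1 : \sum_{i=1}^{k}(1-\xi_{i}) = n\}$ until the $n$-th failure. Then $M(n) := T_{n} - n$ has exactly the distribution $\mathcal{NB}(n, p)$, and this coupling makes the statement $\lim_{n\to\infty}M(n) = \infty$ almost surely meaningful as a claim about a single sequence of random variables on one probability space. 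It is also the coupling furnished naturally by aggregation sampling itself, where one sample sequence $Y_{1}, Y_{2}, \ldots$ is used for every value of $n$ simultaneously.

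The main step is then a short application of the strong law of large numbers. Since the $1 - \xi_{i}$ are i.i.d.\ Bernoulli$(1-p)$ with $1-p \in (0,1)$, their partial sums $S_{k} := \sum_{i=1}^{k}(1-\xi_{i})$ satisfy $S_{k}/k \to 1-p$ almost surely, which in turn forces $T_{n} \to \infty$ a.s.\ (as $T_{n}$ is monotone in $n$). Exploiting the identity $S_{T_{n}} = n$ gives
\[
\frac{n}{T_{n}} \;=\; \frac{S_{T_{n}}}{T_{n}} \;\longrightarrow\; 1-p \quad \text{a.s.},
\]
so $T_{n}/n \to 1/(1-p)$ a.s. Since $p > 0$, this yields $M(n)/n = T_{n}/n - 1 \to p/(1-p) > 0$ almost surely, and hence $M(n) \to \infty$ a.s.

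There is essentially no hard step here: the lemma only makes sense once the $M(n)$ are defined jointly, and once the natural coupling via $\xi_{1}, \xi_{2}, \ldots$ is in place, the SLLN does all the work. If I wished to avoid the renewal-style inversion entirely, a Borel--Cantelli alternative is equally short: for any fixed $K$, a direct bound on the negative-binomial PMF gives $\sum_{n}\Prob{M(n) \leq K} < \infty$, so on any coupling the event $\{M(n) \leq K\}$ occurs only finitely often, giving $\liminf_{n} M(n) \geq K$ a.s.\ for every $K \in \mathbb{N}$ and thus $M(n) \to \infty$ a.s. I prefer the SLLN route because it also records the linear rate $M(n)/n \to p/(1-p)$, which is likely to be convenient in the consistency arguments that follow this lemma.
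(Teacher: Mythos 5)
Your argument is correct, but it takes a genuinely different route from the paper's. The paper proves the lemma via the first Borel--Cantelli lemma: it writes the complement of $\{\lim_n M(n)=\infty\}$ as $\bigcup_k \limsup_n\{M(n)\leq k\}$, shows $\sum_n \Prob{M(n)=k}<\infty$ for each fixed $k$ by a ratio comparison of consecutive negative-binomial probabilities, and concludes. That is exactly the ``Borel--Cantelli alternative'' you sketch in your last paragraph, and it has one structural advantage you correctly identify: it uses only the marginal laws of the $M(n)$, so it is valid for \emph{any} joint realization --- a relevant point since the lemma as stated prescribes only marginals. Your main route instead fixes the natural coupling through a single i.i.d.\ Bernoulli sequence (which is indeed the coupling arising in aggregation sampling, where all the $N(n)$ live on the common space $\Omega^{\infty}$) and then inverts the strong law of large numbers through the identity $S_{T_n}=n$. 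This is essentially the renewal-theoretic viewpoint that the authors themselves mention after Corollary~\ref{cor:tailrisk-cond-conv} but deliberately avoid in order to keep the paper self-contained. What your route buys is strictly more information: the a.s.\ linear rate $M(n)/n \to p/(1-p)$, which quantifies the expected effective sample size formula \eqref{eq:exp-eff-sample-size} almost surely, whereas the paper's argument yields only divergence. One cosmetic remark: the divergence $T_n\to\infty$ follows already from the deterministic bound $T_n\geq n$ rather than from monotonicity; what the SLLN is actually needed for at that point is the a.s.\ finiteness of each $T_n$ (via $S_k\to\infty$). This does not affect correctness.
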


\begin{proof}
  First note that,
  \begin{equation*}
    \{\lim_{n\rightarrow\infty} M(n) = \infty\}^c = \bigcup_{k\in\mathbb{N}}\left(\bigcap_{n\in\mathbb{N}}\ \bigcup_{t > n}\ \{ M(t) > k\}^c\right)
    = \bigcup_{k\in\mathbb{N}} \limsup_{n\rightarrow\infty}\ \{M(n) \leq k\}.
  \end{equation*}
  Hence, to show that ${\Prob{\{\lim_{n\rightarrow\infty} M(n) = \infty\}} = 1}$ it is
  enough to show for each $k\in \mathbb{N}$ we have that 
  \begin{equation}
    \label{eq:tailrisk-limsup-prob}
      \Prob{\ \limsup_{n\rightarrow\infty}\ \{M(n) \leq k\}\ } = 0.
  \end{equation}
  Now, fix $k\in\mathbb{N}$. Then for all $n\in\mathbb{N}$ we have that
  \begin{equation*}
    \Prob{M(n) = k} = \binom{k+n-1}{k} (1-p)^n\ p^k,
  \end{equation*}
  and in particular,
  \begin{equation*}
    \Prob{M(n+1) = k} = \binom{k + n}{k}(1-p)^{n+1}p^k = \frac{k+n}{n} (1-p)\ \Prob{M(n) = k}.
  \end{equation*}
  For large enough $n$ we have that $\frac{k+n}{n}(1-p) \leq c < 1$ for some constant $c$,
  hence ${\sum_{n=1}^\infty \Prob{M(n) = k} < +\infty}$
  and so
  \begin{equation*}
    \sum_{n=1}^{\infty} \Prob{M(n) \leq k} = \sum_{n=1}^{\infty}\sum_{j=1}^k \Prob{M(n) = j} = \sum_{j=1}^k\sum_{n=1}^{\infty} \Prob{M(n) = j} < \infty.
  \end{equation*}
The result \eqref{eq:tailrisk-limsup-prob} now holds
by the first Borel-Cantelli Lemma \cite[Section~4]{BillingsleyMeasure95}.
\qed
\end{proof}

The next Corollary shows that the strong law of large numbers still applies
for the conditional expectation of the non-risk region in aggregation sampling 
despite the sample size being a random quantity.

\begin{corollary}
  \label{cor:tailrisk-cond-conv}
  Suppose $\E{\norm[\rv]} < +\infty$ and $\Prob{\rv\in\riskregion^{c}} > 0$. Then with probability 1
  \begin{equation*}
    \lim_{n\rightarrow\infty} \norm[\frac{1}{N(n)-n} \sum_{i\in \mathcal{I}_{\riskregion^{c}}(N(n))} \rv_i - \E{\ \rv | \rv\in\riskregion^c\ }] = 0.
  \end{equation*}
\end{corollary}

\begin{proof}
  Define the following measurable subsets of $\Omega^{\infty}$:
  \begin{align*}
    \Omega_{1} &= \{\omega \in \Omega^{\infty} : \lim_{n\to\infty}(N(n)(\omega) - n) = \infty\},\\
    \Omega_{2} &= \{\omega \in \Omega^{\infty} : \lim_{n\to\infty} \frac{1}{n} \sum_{i=1}^{n} \mathbbm{1}_{\riskregion^c}(\rv_{i}(\omega))\rv_{i}(\omega) = \E{\mathbbm{1}_{\riskregion^{c}}(\rv)\rv} \},\\
    \Omega_{3} &= \{\omega \in \Omega^{\infty} : \lim_{n\to\infty} \frac{1}{n} \sum_{i=1}^{n} \mathbbm{1}_{\riskregion^c}(\rv_{i}) = \Prob{\rv\in\riskregion^{c}}\}.
  \end{align*}
  By the strong law of large numbers $\Omega_{2}$ and $\Omega_{3}$ have probability one. Since $N(n) - n \sim \mathcal{NB}(n,a)$, where $a = \Prob{\rv\in\riskregion^{c}}$,
  $\Omega_{1}$ has probability 1 by Lemma \ref{lem-tailrisk-negbin-infinity}. Therefore, $\Omega_{1}\cap\Omega_{2}\cap\Omega_{3}$ has probability
  1 and so it is enough to show that for any $\omega\in\Omega_{1}\cap\Omega_{2}\cap\Omega_{3}$
  we have that 
  \begin{equation*}
        \frac{1}{N(n)(\omega)-n} \sum_{i\in\mathcal{I}_{\riskregion^{c}}\left(N(n)\right)}\rv_i(\omega) \rightarrow \E{\ \rv | \rv\in\riskregion^c\ } \text{ as }  n\rightarrow \infty.
  \end{equation*}
  Let $\omega\in\Omega_{1}\cap\Omega_{2}\cap\Omega_{3}$. Since $\omega\in\Omega_{2}\cap\Omega_{3}$, we have that as $m\to\infty$:
  \begin{equation*}
    \frac{1}{\frac{1}{m} \sum_{i=1}^{m} \mathbbm{1}_{\riskregion^{c}}(\rv_{i}(\omega))} \frac{1}{m}\sum_{i=1}^{m} \mathbbm{1}_{\riskregion^c}(\rv_{i}(\omega))\rv_{i}(\omega) \to \frac{1}{\Prob{\rv\in\riskregion^{c}}}\E{\mathbbm{1}_{\riskregion^{c}}(\rv)\rv} = \E{\rv|\rv\in\riskregion^{c}}.
  \end{equation*}
  Now, fix $\epsilon > 0$. Then there exists $N_{1}(\omega)\in\mathbb{N}$ such
  \begin{equation*}
    m > N_{1}(\omega) \implies \norm[\frac{1}{\frac{1}{m} \sum_{i=1}^{m} \mathbbm{1}_{\riskregion^c}(\rv_{i}(\omega))} \frac{1}{m}\sum_{i=1}^{m} \mathbbm{1}_{\riskregion^c}\left(\rv_{i}(\omega)\right)\rv_{i}(\omega) - \E{\rv|\rv\in\riskregion^{c}}] < \epsilon.
  \end{equation*}
  Since $\omega\in\Omega_{1}$ there exists $N_{2}(\omega)$ such that
  \begin{equation*}
    n > N_{2}(\omega) \implies N(n)(\omega) > N_{1}(\omega).
  \end{equation*}
  Noting that
  \begin{align*}
    \frac{1}{\frac{1}{N(n)(\omega)} \sum_{i=1}^{N(n)(\omega)}  \mathbbm{1}_{\riskregion^c}(\rv_{i}(\omega))} &\frac{1}{N(n)(\omega)}\sum_{i=1}^{N(n)(\omega)} \mathbbm{1}_{\riskregion^c}(\rv_{i}(\omega))\rv_{i}(\omega)\\
    &= \frac{1}{\frac{N(n)(\omega) - n}{N(n)(\omega)}} \frac{1}{N(n)(\omega)}\sum_{i=1}^{N(n)(\omega)} \mathbbm{1}_{\riskregion^c}(\rv_{i}(\omega))\rv_{i}(\omega)\\
    &= \frac{1}{N(n)(\omega)-n}\sum_{i\in\mathcal{I}_{\riskregion^{c}}(N(n))}\rv_{i},
  \end{align*}
  we have that
  \begin{equation*}
    n > N_{2}(\omega) \implies \norm[ \frac{1}{N(n)(\omega)-n}\sum_{i\in\mathcal{I}_{\riskregion^{c}}(N(n))}\rv_{i}(\omega) - \E{\rv|\rv\in\riskregion^{c}}] < \epsilon
  \end{equation*}
  and so $\frac{1}{N(n)(\omega)-n}\sum_{i\in\mathcal{I}_{\riskregion^{c}}(N(n))}\rv_{i}(\omega) \to \E{\rv|\rv\in\riskregion^{c}}$ as $n\rightarrow\infty$.
\qed
\end{proof}

To show that aggregation sampling yields solutions consistent with the underlying
random vector $\rv$, we show that with probability 1, for $n$ large enough, it is
equivalent to sampling from the aggregated random vector $\psi_{\riskregion}(\rv)$, as defined in Definition
\ref{def:tailrisk-agg-rv}. If the region $\riskregion$ satisfies the aggregation condition, and $\conE{\rv}{\rv\in\riskregion^{c}} \in \riskregion_{\mathcal{X}}^{c}$, 
Theorem \ref{thr:equiv-rv} tells us that $\rho_\beta\left(f(x,\psi_{\riskregion}(\rv))\right) = \rho_\beta\left(f(x,\rv)\right)$
for all $x\in\mathcal{X}$. Hence, if sampling is consistent for the risk measure $\rho_{\beta}$, then
aggregation sampling is also consistent.

Noting that $|\mathcal{I}_{\riskregion^{c}}(N(n))| = N(n) - n$, we introduce the following notation for the empirical distribution and quantile functions for loss function
with scenario set constructed by aggregation sampling with $n$ risk scenarios.
\begin{align*}
  \hat{F}_{n,x}(\theta) &= \frac{1}{N(n)}\left((N(n)-n) \mathbbm{1}_{\{\orv\in\rvsup:\ f(x,\orv)\leq\theta\}}\left( \frac{1}{N(n)-n}\sum_{i\in\mathcal{I}_{\riskregion^{c}}(N(n))} \rv_{i} \right)  + \sum_{i\in\mathcal{I}_{\riskregion}(n)} \mathbbm{1}_{\{\orv\in\rvsup:\ f(x,\orv)\leq \theta\}}(\rv_{i})\right)
  \\
  \hat{F}_{n,x}^{-1}(u) &= \inf\{\theta\in\rr:\ \hat{F}_{n,x}(\theta) \geq u\}
\end{align*}
Note that these latter functions will depend on the sample $\rv_{1}, \ldots, \rv_{N(n)}$.

\begin{theorem}
  \label{thm:agg-sample-consistent}
  Let $\riskregion_{\mathcal{X}}\subseteq \riskregion \subset \rd$ be a set
  satisfying the aggregation condition. 
  Suppose that that conditions (i)-(v) from Theorem~\ref{thm:tailrisk-conv-unif-quant} and Corollary~\ref{cor:agg-rv-unif-conv} hold, and in addition that
  \begin{enumerate}[(i)]
  \setcounter{enumi}{5}
  \item For each $x\in\mathcal{X}$, $\xi \mapsto f(x, \xi)$ is continuous at $\E{\ \rv | \rv\in\riskregion^c\ }$
  \end{enumerate}
  Then, with probability 1, for all $u \geq \beta$
  \begin{equation}
    \label{eq:agg-sampling-consistent}
    \lim_{n\rightarrow\infty} \sup_{x\in\mathcal{X}} | \hat{F}_{n,x}^{-1}(u) - \tilde{F}_{n,x}^{-1}(u) | = 0.
  \end{equation}
\end{theorem}

\begin{proof}
  We actually prove a slightly stronger result, that is, with probability 1, there exists $N>0$ such that for all $n>N$, $x\in\mathcal{X}$ and $u\geq\beta$ we have that $\hat{F}^{-1}_{n,x}(u) = \tilde{F}^{-1}_{n,x}(u)$.
  First, note that if
  \begin{equation*}
    \theta \geq \max\left\{ f\left(x, \frac{1}{N(n)-n} \sum_{i\in\mathcal{I}_{\riskregion^{c}}(N(n))} \rv_i\right), f\left(x, \E{\ \rv | \rv\in\riskregion^c\ }\right)\right\}
  \end{equation*}
 then
  \begin{align*}
    \hat{F}_{n, x}(\theta) &= \frac{N(n) - n}{N(n)} \\
                               &\qquad+ \frac{1}{N(n)}\sum_{i\in\mathcal{I}_{\riskregion}(N(n))} \mathbbm{1}_{\{\orv\in\rvsup:\ f(x, \orv) \leq \theta\}}(\rv_{i})\\
    &= \tilde{F}_{N(n), x}(\theta).
  \end{align*}
  So if the following holds with probability 1
  \begin{equation}
    \label{eq:tailrisk-agg-sampling-dist}
    \liminf_{n\rightarrow\infty} \inf_{x\in\mathcal{X}} \left( \tilde{F}_{n,x}^{-1}(\beta) - \max\left( f\left(x, \frac{1}{N(n)-n} \sum_{i\in\mathcal{I}_{\riskregion^{c}}(N(n))} \rv_i\right), f\left(x, \E{\ \rv | \rv\in\riskregion^c\ }\right) \right)\right) > 0
  \end{equation}
  then, by application of Lemma~\ref{lem:quantile-beta-tail}, this implies that with probability 1, there exists $N>0$ such that for all $n>N$ and for all $u \geq \beta$ and $x\in\mathcal{X}$ we have $\hat{F}_{n, x}^{-1}(u) = \tilde{F}_{N(n), x}^{-1}(u)$ as required.
  Since $\E{\ \rv | \rv\in\riskregion^c\ } \in \riskregion_{\mathcal{X}}^{c}$ we have that $f(x, \E{\ \rv | \rv\in\riskregion^c\ }) < F_{x}^{-1}(\beta)$ for all $x\in\mathcal{X}$, and since $\mathcal{X}$ is compact there exists $\delta > 0$ such that
  \begin{equation}
    \label{eq:tailrisk-cond-exp-quantile-dist}
    \inf_{x\in\mathcal{X}}\ \left(F_{x}^{-1}(\beta) - f\left(x, \E{\ \rv | \rv\in\riskregion^c\ }\right)\right) > \delta.
  \end{equation}
  By Corollary~\ref{cor:tailrisk-cond-conv}, the compactness of $\mathcal{X}$ and the continuity of $\xi \mapsto f(x,\xi)$ at $\E{\ \rv | \rv\in\riskregion^c\ }$, we have with probability 1
  \begin{equation}
    \label{eq:tailrisk-conv-cost-cont}
    \limsup_{n\rightarrow\infty} \sup_{x\in\mathcal{X}} \left| f\left(x, \frac{1}{N(n)-n} \sum_{i\in\mathcal{I}_{\riskregion^{c}}(N(n))} \rv_i\right) - f\left(x, \E{\ \rv | \rv\in\riskregion^c}\right)\ \right| = 0.
  \end{equation}
  Also, by Corollary~\ref{cor:agg-rv-unif-conv}, with probability 1
  \begin{equation}
    \label{eq:tailrisk-quant-unif-conv-2}
    \limsup_{n\rightarrow\infty}\sup_{x\in\mathcal{X}}\left| F_{x}^{-1}\left(\beta\right) - \tilde{F}_{N(n), x}^{-1}\left(\beta\right)\right| = 0.
\end{equation}
Thus, letting $z(x) = F_{x}^{-1}\left(\beta\right) - f\left(x, \E{\ \rv | \rv\in\riskregion^c}\right)$, we also have with probability 1 that
\begin{equation*}
\lim_{n\rightarrow\infty}\sup_{x\in\mathcal{X}}\left| \left(\tilde{F}_{N(n), x}^{-1}(\beta) - f\left(x, \frac{1}{N(n)-n} \sum_{i\in\mathcal{I}_{\riskregion^{c}}(N(n))} \rv_i\right)\right) - z(x)\right| = 0.
\end{equation*}
In particular, with probability 1 there exists $N$ such that for $n > N$
\begin{equation}
  \label{eq:tail-risk-thm3-conv-delta}
\sup_{x\in\mathcal{X}}\left| \left(\tilde{F}_{N(n), x}^{-1}(\beta) - f\left(x, \frac{1}{N(n)-n} \sum_{i\in\mathcal{I}_{\riskregion^{c}}(N(n))} \rv_i\right)\right) - z(x)\right| < \frac{\delta}{2}.
\end{equation}
In which case, for $n > N$
\begin{align*}
  &\inf_{x\in\mathcal{X}} \left( \tilde{F}_{N(n), x}^{-1}(\beta) - f\left(x, \frac{1}{N(n)-n} \sum_{i\in\mathcal{I}_{\riskregion^{c}}(N(n))} \rv_i\right) \right) \\
 = &\inf_{x\in\mathcal{X}} \left( z(x) + \tilde{F}_{N(n), x}^{-1}(\beta) - f\left(x, \frac{1}{N(n)-n} \sum_{i\in\mathcal{I}_{\riskregion^{c}}(N(n))} \rv_i\right)  - z(x)\right) \\
  \geq &\inf_{x\in\mathcal{X}} z(x) - \sup_{x\in\mathcal{X}}\left| \left(\tilde{F}_{N(n), x}^{-1}(\beta) - f\left(x, \frac{1}{N(n)-n} \sum_{i\in\mathcal{I}_{\riskregion^{c}}(N(n))} \rv_i\right)\right) - z(x)\right| \\
  > & \delta - \frac{\delta}{2} = \frac{\delta}{2} \qquad \text{by \eqref{eq:tailrisk-cond-exp-quantile-dist} and \eqref{eq:tail-risk-thm3-conv-delta}.}
\end{align*}

We can similarly show that  $\limsup_{n\rightarrow\infty} \inf_{x\in\mathcal{X}} \left( \tilde{F}_{N(n), x}^{-1}(\beta) -  f\left(x, \E{\ \rv | \rv\in\riskregion^c}\right)\right) > 0$ holds with probability 1. Hence \eqref{eq:tailrisk-agg-sampling-dist} holds with probability 1 and the proof is complete.
\qed
\end{proof}

Note that although the continuity conditions (ii), (v) and (vi) look complicated, the loss function $f : \mathcal{X}\times \Xi \rightarrow \rr$ will typically be continuous everywhere, and so these will be satisfied automatically.

\section{A conservative risk region for monotonic loss functions}
\label{sec:monotonic}

In order to use risk regions for scenario generation, we need to have a
characterization of the risk region which conveniently allows us to test membership.
In general this is a difficult as the risk region depends on the loss function,
the distribution and the problem constraints. Therefore, as a proof-of-concept, in the following two sections we derive risk regions for two classes of problems.
In this section we propose a conservative risk region for problems
which have monotonic loss functions.

\begin{definition}[Monotonic loss function]
  A loss function $f: \mathcal{X}\times\rvsup \rightarrow \rr$ is monotonic increasing
  if for all $x\in\mathcal{X}$ and $\orv, \tilde{\orv}\in \rvsup$ such that $\orv < \tilde{\orv}$ we have $f(x, \orv) < f(x, \tilde{\orv})$. Similarly, we say it is monotonic decreasing if for all $x\in\mathcal{X}$ and $\orv, \tilde{\orv}\in \rvsup$ such that $\orv<\tilde{\orv}$ we have $f(x,\orv) > f(x, \tilde{\orv})$.
\end{definition}

Monotonic loss functions occur naturally in stochastic linear programming.
The following result presents a class of loss functions which arise in the
context of network design, and gives conditions under
which they are monotonic.

\begin{proposition}
  \label{prop:mono-recourse}
  Suppose $\mathcal{X}\subseteq \rk_{+}$, $\rvsup\subseteq\rd_{+}$ and the loss function $Q(x, \xi)$
  is defined to be the optimal value to the following linear program:
  \begin{align}
    \min_{y,z}\ & q^{T} y + u^{T} z \label{eq:net-des-obj} \\ 
    \text{such that } & Wy + z \geq \orv \label{eq:net-des-dem-sat}\\
    & By \leq b \label{eq:net-des-supply}\\
    & Ty \leq Vx \label{eq:net-des-open-cons}\\
    & \mathcal{N}y = 0 \label{eq:net-des-net-flow}\\
    & y, z \geq 0 \label{eq:net-des-pos},
  \end{align}
  where $W, B, T, V, \mathcal{N}$ are matrices and $q, u, b$ are vectors of compatible dimensions.
  Then, $Q(x, \orv)$ is monotonic increasing under the following conditions:
  \begin{enumerate}
  \item $q, u > 0$,
  \item $b\geq 0$,
  \item $W, B, T, V \geq 0$.
  \end{enumerate}
\end{proposition}

\begin{proof}
  Fix $x\in\mathcal{X}$. The problem is always feasible since $y=0$ and $z=\orv$ is a feasible solution. Since $x\geq 0$ and $q, u > 0$ the problem \eqref{eq:net-des-obj}--\eqref{eq:net-des-pos} is bounded below by zero. In addition, when $\orv \geq 0$ with at least one component strictly greater than zero, the optimal
  solution $(y^{*}, z^{*})$ must contain at least one strictly positive element due to constraint \eqref{eq:net-des-dem-sat} and the fact that $W \geq 0$, and so in this case the optimal value is 
  strictly positive. Because the problem is both bounded below and feasible,
  strong duality applies and so $Q(x,\orv)$ is also equal to the optimal
  solution to the dual problem:
  \begin{align}
    \max_{\pi,\nu,\eta,\lambda} &\ \orv^{T}\pi - x^{T}V^{T} \nu - b^{T}\eta \label{eq:net-dual-obs} \\
    \text{such that } & W^{T}\pi - T^{T}\nu - B^{T} \eta + \mathcal{N}^{T} \lambda \leq q \label{eq:net-dual-blocks} \\
    & \pi \leq u \label{eq:net-dual-pi}\\
    & \pi, \nu, \eta \geq 0 \label{eq:net-dual-pos}.
  \end{align}
  Let $\bar{\orv}, \tilde{\orv} \in \rvsup$ be such that $\bar{\orv} < \tilde{\orv}$. In the first case suppose that $\bar{\orv}\neq 0$,
  and let $(\bar{\pi},\bar{\nu},\bar{\eta},\bar{\lambda})$ be the optimal dual
  variables for \eqref{eq:net-dual-obs}--\eqref{eq:net-dual-pos} for $\orv = \bar{\orv}$.
  As discussed above, this means that $Q(x, \bar{\orv}) > 0$, and given that $x^{T}V^{T}\bar{\nu} + b^{T}\bar{\eta} \geq 0$, at least one component
  of $\bar{\pi}$ will be greater than zero in order for the objective of the dual to be strictly positive. Now, $(\bar{\pi},\bar{\nu},\bar{\eta},\bar{\lambda})$ is also a feasible
  solution to the dual problem with $\orv=\tilde{\orv}$ and so
  \begin{align*}
    Q(x,\bar{\orv}) &= \bar{\orv}^{T}\bar{\pi} - x^{T}V^{T}\bar{\nu} - b^{T}\bar{\eta}\\
               &< \tilde{\orv}^{T}\bar{\pi} - x^{T}V^{T}\bar{\nu} - b^{T}\bar{\eta}\\
               &\leq Q(x,\tilde{\orv}).
  \end{align*}
  In the second case suppose that $\bar{\orv} = 0$. In this case $y=0,\ z=0$ is a feasible solution to the primal problem
  \eqref{eq:net-des-obj}--\eqref{eq:net-des-pos} with $\orv=\bar{\orv}$ and this solution has an objective value of zero. Since the objective is bounded
  below by zero, this means this solution is also optimal and so $Q(x,\bar{\orv}) = 0$. Since $\tilde{\orv} > 0$
  we have that $Q(x,\tilde{\orv}) > 0$, and so $Q(x,\bar{\orv}) < Q(x,\tilde{\orv})$. Hence $Q(x,\orv)$ is monotonic as required.
\qed
\end{proof}

This recourse function arises in stochastic network design, and the problem formulation
in the previous proposition was adapted from a model in the paper \cite{SantosoEtAl2005}.
In this type of problem, we have a network consisting of suppliers, processing units, and customers,
and decisions must be made relating to opening facilities and the capacities
of nodes and arcs. The problem which defines the recourse function $Q(x, \orv)$ depends on the capacity and opening decisions $x$
of the first stage, and the demand of the customers $\orv$. The aim of the problem is construct of flow of products $y$
which minimize transportation costs for satisfying customers demand, plus penalties for any unsatisfied demand $z$.

For a problem with a monotonic loss function, the following result defines a conservative risk region.

\begin{theorem}
  Suppose the loss function $f: \mathcal{X}\times\rvsup \rightarrow \rr$ is monotonic increasing. Then the following
  set is a conservative risk region:
  \begin{equation}
    \label{eq:monotonic-risk-region}
    \riskregion_{1} = \{ \orv\in\rvsup : \Prob{\rv > \orv} \leq 1 - \beta\}.
  \end{equation}
  Similarly, if the loss function is monotonic decreasing then the following set is
  a conservative risk region:
  \begin{equation}
    \label{eq:monotonic-risk-region-dec}
    \riskregion_{2} = \{ \orv\in\rvsup : \Prob{\rv < \orv} \leq 1 - \beta\}.
  \end{equation}
\end{theorem}

\begin{proof}
  Suppose $f(x,\orv)$ is monotonic increasing and let $\orv\in\riskregion_{\mathcal{X}}$, then
  \begin{align*}
    \Prob{\rv > \orv} & \leq \Prob{f(x,\rv) > f(x,\orv)} \qquad \text{by monotonicity}\\
    & = 1 - \underbrace{\Prob{f(x,\rv) \leq f(x,\orv)}}_{\geq \beta}\\
    & \leq 1 - \beta
  \end{align*}
  and so $\orv \in \riskregion_{1}$ as required. The set $\riskregion_{2}$ can similarly be shown to be
  a conservative risk region when $f(x, \orv)$ is monotonic decreasing.
\qed
\end{proof}

\section{An exact risk region for the portfolio selection problem}
\label{sec:tailrisk-portfolio}

In this section, we characterize exactly the risk region of the
portfolio selection problem when the distribution of asset returns belongs to a certain
class of distributions.

In the portfolio selection problem one aims to choose a portfolio
of financial assets with uncertain returns. For $i = 1,\ldots, d$, let $x_{i}$ denote
the amount to invest in asset $i$, and $\rv_{i}$ the random
return of asset $i$. The loss function in this problem is the 
negative total return, that is $f(x, \rv) = \sum_{i=1}^d -x_i \rv_i = -x^T \rv$,
and $\rvsup = \rd$.
The set $\mathcal{X}\subset\rd$ of feasible portfolios may encompass
constraints like no short-selling ($x \geq 0$), total investment
($\sum_{i=1}^{d} x_{i} = 1$) and quotas on certain stocks
($x \leq c$).

The following corollary gives sufficient conditions for the risk region
to satisfy the aggregation condition, and for aggregation sampling
to be consistent.

\begin{corollary}
  \label{cor:tailrisk-agg-condition-portfolio}
  Suppose that $\riskregion \supseteq \riskregion_{\mathcal{X}}$ and
  that the following conditions hold:
  \begin{enumerate}
  \item $\rv$ is continuous with support $\rd$,
  \item There exists $x_{1},x_{2}\in\mathcal{X}$ which are linearly independent,
  \item $0\notin\mathcal{X}$,
  \item $\mathcal{X}$ is compact.
  \end{enumerate}
  Then $\riskregion$ satisfies the aggregation condition, and
  aggregation sampling with respect to $\riskregion$ is consistent in the sense of Theorem \ref{thm:agg-sample-consistent}.
\end{corollary}

\begin{proof}
  To prove that $\riskregion$ satisfies the aggregation condition, it is enough to show that $\riskregion_{\mathcal{X}}$
  satisfies the aggregation condition. We prove this by showing that all the conditions
  of Proposition \ref{prop:agg-condition-suff-conditions}
  hold. Note that $x\mapsto -x^{T}\orv$ is continuous
  so condition (i) of Proposition~\ref{prop:agg-condition-suff-conditions} holds immediately.

  For each $x\in\mathcal{X}$ the interior of the corresponding risk region
  and non-risk region are open half-spaces:
  \begin{equation*}
    \interior{\riskregion_{x}} = \{ \orv\in\rd: -x^{T}\orv > F_{x}^{-1}(\beta)\}\ \text{ and }\ \interior{\riskregion_{x}^{c}} = \{ \orv\in\rd: -x^{T}\orv < F_{x}^{-1}(\beta)\}.
  \end{equation*}
  Fix $\bar{x}\in\mathcal{X}$. Then either $\bar{x}$ is linearly independent to
  $x_{1}$ or it is linearly independent to $x_{2}$. Assume
  it is linearly independent to $x_{1}$. Now, $\interior{\riskregion_{\bar{x}}}$
  and $\interior{\riskregion_{x_{1}}}$ are non-parallel half-spaces
  and so both $\interior{\riskregion_{\bar{x}}\cap\riskregion_{x_{1}}}$ and $\interior{\riskregion_{x_{1}}\setminus \riskregion_{\bar{x}}} = \interior{\riskregion_{x_{1}}} \cap \interior{\riskregion_{\bar{x}}^{c}}$
  are non-empty, and since we also have $\rvsup=\rd$, condition (ii) of 
  Proposition~\ref{prop:agg-condition-suff-conditions} is satisfied. 

  Since $\riskregion_{x_{1}}$ and $\riskregion_{x_{2}}$
  are non-parallel half-spaces, their union $\riskregion_{x_{1}}\cup\riskregion_{x_{2}}$
  is connected. Similarly, for any $x\in\mathcal{X}$, we must have $\riskregion_{x}$
  being non-parallel with either $\riskregion_{x_{1}}$ or $\riskregion_{x_{2}}$
  and so $\riskregion_{x}\cup\riskregion_{x_{1}}\cup\riskregion_{x_{2}}$
  must also be connected. Hence, $\riskregion_{\mathcal{X}} = \bigcup_{x\in\mathcal{X}}\left(\riskregion_{x}\cup\riskregion_{x_{1}}\cup\riskregion_{x_{2}}\right)$ is connected
  so condition (iii) of Proposition~\ref{prop:agg-condition-suff-conditions} is also satisfied.
  Hence $\riskregion$ satisfies the aggregation condition.

  We show that aggregation sampling is consistent in the sense of Theorem \ref{thm:agg-sample-consistent}
  by showing that the conditions of this theorem hold. 
  We have already shown that condition (i) of Theorem~\ref{thm:agg-sample-consistent} holds.
  The loss function is continuous, and so condition (ii) of Theorem~\ref{thm:agg-sample-consistent} holds.
  Let $\epsilon > 0$, then
  \begin{equation*}
   F_{x}\left(F_{x}^{-1}(\beta) + \epsilon\right) -  F_{x}\left(F_{x}^{-1}(\beta)\right) = 
    \Prob{\rv \in\{\orv\in\rd: F_{x}^{-1}(\beta) < -x^{T}\orv \leq F_{x}^{-1}(\beta)+\epsilon\}}.
  \end{equation*}
  Since $x\neq 0$, the set defining this event has a non-empty interior, and since the support of $\rv$ is $\rd$, this probability is greater than zero.
  Hence, $F_{x}$ is increasing at $F_{x}^{-1}(\beta)$. Since $\rv$ is continuous, we also have that $F_{x}$ is continuous and so condition (iii) of 
  Theorem~\ref{thm:agg-sample-consistent} holds.

  By Proposition~\ref{prop:tailrisk-convex-non-risk-region} $\riskregion_{\mathcal{X}}^{c}$ is convex,
  and since $\riskregion^{c}\subseteq\riskregion_{\mathcal{X}}^{c}$ and $\riskregion_{\mathcal{X}}$ is open we have $\E{\ \rv | \rv\in\riskregion^c\ } \in \interior{\riskregion_{\mathcal{X}}^c}$,
  and so condition (iv) of Theorem~\ref{thm:agg-sample-consistent} holds.
  Finally, condition (v) of Theorem~\ref{thm:agg-sample-consistent} holds by assumption
  and so aggregation sampling with the set $\riskregion$ is consistent in
  sense of Theorem~\ref{thm:agg-sample-consistent}.
\qed
\end{proof}

Elliptical distributions are a general class of distributions which
include among others the multivariate Normal and multivariate
$t$-distributions. See \cite{FangKotzNg198911} for a full overview of
the subject.

\begin{definition}[Spherical and Elliptical Distributions]
  \label{def:elliptical}
  Let $\bm{\zeta}$ be a random vector in $\rd$. Then $\bm{\zeta}$ is said to be \emph{spherical}
if its distribution is invariant under orthonormal transformations; that is, if
$$\bm{\zeta} \sim U\bm{\zeta} \qquad \text{for all } U\in\rr^{d\times d}\text{ orthonormal}.$$
  Let $\rv$ be a random vector in $\rd$. Then $\rv$ is said to be \emph{elliptical}
if it can be written $\rv = P\bm{\zeta} + \mu$ where $P\in \rr^{d\times d}$ is non-singular,
$\mu\in\rd$, and $\bm{\zeta}$ is random vector with spherical distribution. 
We will denote this $\rv\sim\mathrm{Elliptical}(\bm{\zeta}, P, \mu)$.
\end{definition}

An important property of elliptical distributions is that
for any $x\in\rd$ we can characterize exactly the distribution of $x^{T}\rv$.
If $\rv\sim\mathrm{Elliptical}(\bm{\zeta}, P, \mu)$ then:
\begin{equation}
  \label{eq:elliptical-return-dist}
  -x^T\rv \sim \norm[Px] \bm{\zeta}_1 - x^T \mu,
\end{equation}
where $\bm{\zeta}_{1}$ is the first component of the random vector $\bm{\zeta}$, and
$\norm$ denotes the standard Euclidean norm. 
By \eqref{eq:elliptical-return-dist} the $\beta$-quantile of the loss of a
portfolio is as follows:
\begin{equation*}
  \label{eq:elliptical-return-var}
  F_{x}^{-1}(\beta) = \norm[Px] F^{-1}_{\bm{\zeta}_1}(\beta) - x^T \mu.
\end{equation*}
Therefore, the exact risk region for $\rv\sim\mathrm{Elliptical}(\bm{\zeta},P,\mu)$, is as follows:
\begin{equation}
  \label{eq:reg-agg-no-constraints}
  \bigcup_{x\in\mathcal{X}}\{\orv\in\rd: -x^T \orv \geq \norm[Px] F_{\bm{\zeta}_1}^{-1}(\beta) - x^T\mu\}.
\end{equation}
This characterization is not practical for testing whether or not a point belongs
to the risk region, which is required for our scenario generation algorithms.
However, a more convenient form is available in the case where $\mathcal{X}\subset\rd$ is convex. Before stating the result, we recall the concept of a projection onto a convex set.

\begin{definition}[Projection]
  \label{def:proj}
  Let $C \subset \rd$ be a closed convex set. Then for
  any point $\orv\in\rd$, we define the projection of $\orv$ onto $C$
  to be the unique point $p_C(\orv)\in C$ such that
  $\inf_{x\in C} \norm[x-\orv] = \norm[p_C(\orv) - \orv]$.
\end{definition}

By a slight abuse of notation, for a set $\mathcal{A}\subset\rd$ and a matrix $T\in\rr^{d\times d}$, we write ${T\left(\mathcal{A}\right) := \{ T\orv : \orv\in\mathcal{A}\}}$. Finally, recall that
the conic hull of a set $\mathcal{A}\subset\rd$, which we denote $\conic{\mathcal{A}}$, is the smallest convex cone containing $\mathcal{A}$.

\begin{theorem}
  Suppose  $\rv\sim\mathrm{Elliptical}(\bm{\zeta}, P, \mu)$ and $\mathcal{X}\subset\rd$ is a convex set. Then the exact non-risk region in \eqref{eq:reg-agg-no-constraints} can be written as follows:
\begin{equation}
  \label{eq:agg-region-cone-constraint}
  \riskregion_{\mathcal{X}}^{c} = P^{T}\left(\{\tilde{\orv}\in\rd: \norm[p_{K'}(\tilde{\orv} - \tilde{\mu})] < F_{\bm{\zeta}_1}^{-1}\left(\beta\right)\}\right)
\end{equation}
where $\tilde{\mu}=(P^{T})^{-1}\mu$, $K' = -PK$ and $K = \conic{\mathcal{X}}$.
\end{theorem}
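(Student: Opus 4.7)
The plan is to collapse the uncountable family of half-space constraints in \eqref{eq:reg-agg-no-constraints} into a single inequality involving the norm of a projection onto a closed convex cone. The strategy has three ingredients: a positive-homogeneity reduction from $\mathcal{X}$ to its conic hull, a linear change of variables that absorbs $P$, and the classical identity expressing the normalized support function of a convex cone as the norm of the projection onto that cone.

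First, I would rewrite the defining inequality $-x^T y \le \norm[Px]\, F_{X_1}^{-1}(\beta) - x^T\mu$ equivalently as $x^T(\mu - y) \le \norm[Px]\, F_{X_1}^{-1}(\beta)$. Both sides are positively homogeneous of degree one in $x$ and vanish at the origin, so the constraint holds for every $x \in \mathcal{X}$ if and only if it holds for every $x$ in the conic hull $K = \conic{\mathcal{X}}$ (and, by continuity, in its closure). Hence $y \in \riskregion_{\mathcal{X}}^{c}$ iff
\begin{equation*}
  \sup_{x \in K \setminus \{0\}} \frac{x^{T}(\mu - y)}{\norm[Px]} \;\le\; F_{X_1}^{-1}(\beta).
\end{equation*}
Next, substituting $u = Px$ (a linear bijection of $K \setminus \{0\}$ onto $K' \setminus \{0\}$ with $\norm[Px] = \norm[u]$) and setting $w := P^{-T}(\mu - y)$, this becomes $\sup_{u \in K' \setminus \{0\}} u^{T} w / \norm[u] \le F_{X_1}^{-1}(\beta)$.

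The heart of the proof is then the identity, valid for any non-empty closed convex cone $C \subset \rd$ and any $w \in \rd$,
\begin{equation*}
  \sup_{u \in C \setminus \{0\}} \frac{u^{T} w}{\norm[u]} \;=\; \norm[p_{C}(w)],
\end{equation*}
which I would establish via Moreau's decomposition $w = p_{C}(w) + p_{C^{\circ}}(w)$: the two summands are orthogonal and $u^{T} p_{C^{\circ}}(w) \le 0$ for every $u \in C$, so Cauchy--Schwarz applied to $u^{T} p_{C}(w)$ delivers the upper bound, while taking $u = p_{C}(w)$ attains it whenever that vector is nonzero (both sides being zero otherwise). Applying this with $C = K'$ and then expressing the resulting inequality in the variable $\tilde{y} = P^{-T} y$ (so that $y = P^{T} \tilde{y}$ and $w = P^{-T}\mu - \tilde{y}$) yields the projection-norm characterization in \eqref{eq:agg-region-cone-constraint}, i.e. $\riskregion_{\mathcal{X}}^{c} = P^{T}\bigl(\{\tilde{y} : \norm[p_{K'}(\tilde{y} - \mu)] \le F_{X_1}^{-1}(\beta)\}\bigr)$.

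The main obstacle is the projection-via-supremum identity, which is where the Moreau orthogonality does all the real work; the homogenization, the change of variables $u = Px$, and the final substitution $\tilde{y} = P^{-T} y$ are routine. A minor technical point worth checking is that $K'$ may be replaced by its closure without affecting the supremum (since the map $x \mapsto x^{T}(\mu - y)/\norm[Px]$ is continuous on $K \setminus \{0\}$), so that the Moreau decomposition applies legitimately to the closed convex cone $\overline{K'} = P\overline{K}$.
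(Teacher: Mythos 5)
Your overall route is essentially the one the paper takes in Appendix~\ref{sec:tailrisk-convex}: reduce the family of constraints in \eqref{eq:reg-agg-no-constraints} from $\mathcal{X}$ to the conic hull $K$ via positive homogeneity (this is exactly where convexity of $\mathcal{X}$ enters, so that every $x\in K$ is $\lambda\bar{x}$ with $\bar{x}\in\mathcal{X}$), absorb $P$ by the substitution $u=Px$, and identify the remaining condition with a bound on the norm of a projection onto a cone. The only methodological difference is how the key identity is obtained: the paper proves the two inclusions of Theorem~\ref{thr:cone-proj-ellipse-1} from two bespoke projection facts, $p_K(y)^T(y-p_K(y))=0$ (Proposition~\ref{prop:convex-cone-proj}) and $x^Ty\le x^Tp_K(y)$ for $x\in K$ (Proposition~\ref{prop:proj-scalar-prod}), which are precisely the orthogonality and polar-cone halves of Moreau's decomposition that you invoke wholesale to get $\sup_{u\in C\setminus\{0\}}u^Tw/\norm[u]=\norm[p_C(w)]$. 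That is a legitimate and slightly slicker packaging of the same mathematics. (Minor caveat: when $p_C(w)=0$ the supremum can be strictly negative rather than zero, so the identity really reads $\max(\sup,0)=\norm[p_C(w)]$; the set equivalence you need survives because $F_{X_1}^{-1}(\beta)\ge 0$ for $\beta\ge 1/2$.)

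The genuine problem is your final step. Your own algebra gives $w=P^{-T}(\mu-y)=P^{-T}\mu-\tilde{y}$ with $\tilde{y}=P^{-T}y$, so what you have actually proved is that $y\in\riskregion_{\mathcal{X}}^{c}$ iff $\norm[p_{K'}(P^{-T}\mu-\tilde{y})]\le F_{X_1}^{-1}(\beta)$, equivalently $\riskregion_{\mathcal{X}}^{c}=\mu-P^{T}\left(\{\tilde{y}:\norm[p_{K'}(\tilde{y})]\le F_{X_1}^{-1}(\beta)\}\right)$. You then assert that this \emph{yields} \eqref{eq:agg-region-cone-constraint}, whose membership condition is $\norm[p_{K'}(P^{-T}y-\mu)]\le F_{X_1}^{-1}(\beta)$. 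For a general cone the map $w\mapsto\norm[p_{K'}(w)]$ is neither even nor indifferent to replacing $\mu$ by $P^{-T}\mu$, so these are different sets: with $K'=\rd_{+}$, $P=I$, $\mu=0$ your condition is $\norm[(-y)_{+}]\le F_{X_1}^{-1}(\beta)$ while the target is $\norm[y_{+}]\le F_{X_1}^{-1}(\beta)$, and only the former contains the positive orthant where the loss $-x^{T}y$ is certainly small. The discrepancy washes out only in the unconstrained case, where the set collapses to the centrally symmetric ellipsoid \eqref{eq:agg-ellipsoid}. To be fair, the paper's one-line proof (citing Corollary~\ref{cor:cone-proj-ellipse-2}, which characterizes $\{y:x^{T}y\le\norm[Px]\alpha\ \forall x\in K\}$ rather than $\{y:x^{T}(\mu-y)\le\norm[Px]\alpha\ \forall x\in K\}$) elides exactly the same reflection-and-translation step, and \eqref{eq:risk-region-ps-pos} inherits it; but a correct write-up must either carry the argument $P^{-T}(\mu-y)$ through to a corrected statement or justify the identification you made, and as written your last sentence is an unjustified leap at precisely the point where the sign bookkeeping matters.
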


\begin{proof}
  \begin{align*}
    \riskregion_{\mathcal{X}}^{c} &= \{\orv\in\rd : -x^{T}\orv < \norm[Px] F_{\bm{\zeta}_{1}}^{-1}(\beta) - x^{T}\mu\ \forall x\in\mathcal{X}\}\\
    &= \{\orv\in\rd : \tilde{x}^{T}\orv < \norm[P\tilde{x}] F_{\bm{\zeta}_{1}}^{-1}(\beta) + \tilde{x}^{T}\mu\ \forall \tilde{x}\in -\mathcal{X}\}\\
    &= \{\orv\in\rd : \tilde{x}^{T}(\orv-\mu) < \norm[P\tilde{x}] F_{\bm{\zeta}_{1}}^{-1}(\beta)\ \forall \tilde{x}\in -\mathcal{X}\}\\
    &= P^{T}\left(\{\tilde{\orv}\in\rd : \norm[p_{K'}(\tilde{\orv} - \tilde{\mu})] < F_{\bm{\zeta}_{1}}^{-1}(\beta)\}\right)\ \text{ by application of Corollary~\ref{cor:cone-proj-ellipse-2} in Appendix~\ref{sec:tailrisk-convex}}\\
    \end{align*}
\qed
\end{proof}

\section{Numerical tests}
\label{sec:tailrisk-numtests}

In this section, we test the performance of the methodology developed in this paper.
For the portfolio selection problem, when $\mathcal{X}\subseteq\rr_{+}^{d}\setminus\{0\}$ the loss function $f(x,\orv) = -x^{T}\orv$
is monotonic decreasing. We therefore use this problem throughout this section to
test both the conservative risk region presented in Section~\ref{sec:monotonic},
and the exact risk region presented in Section~\ref{sec:tailrisk-portfolio}.

In order to test whether a point belongs to the exact non-risk region in \eqref{eq:agg-region-cone-constraint}
requires the projection of a point onto a convex cone. This can be done by solving
a small linear complementarity problem. See \cite{Ujvari2007} or our follow-up
paper \cite{Fairbrother15b} for more details. We solve linear
complementarity problems using code from the Siconos numerics library \cite{A-AcaPer07}.
To test whether a point $\orv\in\rvsup$ belongs to the conservative risk region in \eqref{eq:monotonic-risk-region-dec},
involves the evaluation of the probability $\Prob{\rv < \orv}$. Since calculating this probability
exactly involves evaluating a multidimensional integral we approximate the probability
by taking a large sample from $\rv$, and using the empirical distribution function of this sample.
Repeatedly testing membership of both types of risk region is therefore computationally intensive.
Ways of mitigating this issue are discussed in our follow-up paper \cite{Fairbrother15b}.
These membership tests, and the aggregation sampling algorithm have been implemented and made available as a package for the Julia programming language \cite{fairbrother17package}. All experiments were conducted on a laptop with an Intel
Core i7-720QM CPU at 1.6 GHz.

\subsection{Probability of risk regions}
\label{sec:prob-risk-regi}

As discussed in Section~\ref{sec:tailrisk-aggr-sampl-reduct}, the performance
of the aggregation sampling algorithm with respect to standard
Monte Carlo sampling improves as the probability of the aggregation
region increases. In this first experiment we observe the behavior
of this probability over a range of dimensions.

For this experiment, we suppose that $K = \conic{\mathcal{X}} = \rd_{+}$,
and that the random vector follows a multivariate Normal distribution $\mathcal{N}(0, \Lambda(\rho))$,
where the covariance matrix $\Lambda(\rho)$, for $0\leq\rho<1$, is defined as follows:
\begin{equation*}
  \Lambda_{ij}(\rho) =
  \begin{cases}
    \rho & \text{ if } i\neq j,\\
    1 & \text{ otherwise.}
  \end{cases}
\end{equation*}
In particular, we calculate the probability for the case $\rho = 0$, that is
where the asset returns are independently distributed, and the case $\rho = 0.3$, that is
where the asset returns are positively correlated. The probabilities
of the non-risk regions are estimated by sampling and testing membership
for 20000 points.

The results of this experiment are plotted in Figure~\ref{fig:tailrisk-probs-riskregion}.
In Figures~\ref{fig:tailrisk-monotonic-prob} and \ref{fig:tailrisk-nonrisk-prob-ps}
are plotted the probabilities of the conservative and exact aggregation regions.
To aid the readers' intuition we have also plotted a reduced scenario
set in two dimensions using conservative and exact risk regions in Figures~\ref{fig:tailrisk-monotonic-scen-set}
and \ref{fig:tailrisk-nonrisk-scenset-ps} for $\rho=0.3$ and $\beta=0.95$.

The figures show that not only is the probability of the conservative aggregation region smaller
than that of the exact aggregation region but also it decays much more quickly.
This emphasizes the importance of using an exact risk region for aggregation
sampling if possible. Interestingly the probability of the aggregation regions
for the correlated asset returns is greater and decays more slows than that of
the independent asset returns. This tells us that, in addition to the loss function, 
the performance of our methodology depends strongly on the distribution of the random
vector. Although the probability of the conservative aggregation region decays fairly
rapidly, it remains non-negligible for random vectors of a moderate dimension, around 15,
for the correlated asset returns. For exact aggregation regions, the probability
remains high for the correlated asset returns for up to a dimension of 40.

\begin{figure}[h]
  \centering
  \begin{subfigure}[b]{0.45\textwidth}
    \includegraphics[width=\textwidth]{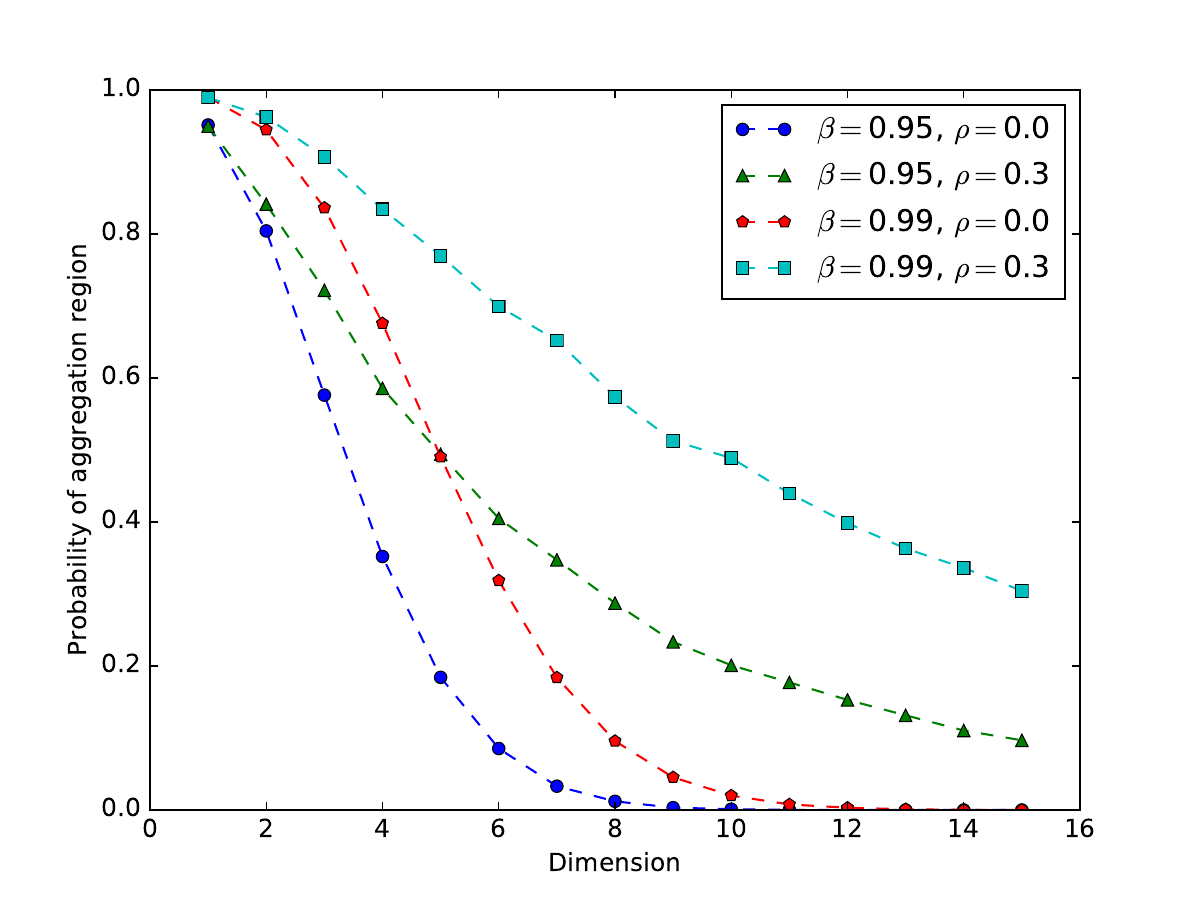}
    \caption{Probability of conservative aggregation region}
    \label{fig:tailrisk-monotonic-prob}
  \end{subfigure}
  \begin{subfigure}[b]{0.45\textwidth}
    \includegraphics[width=\textwidth]{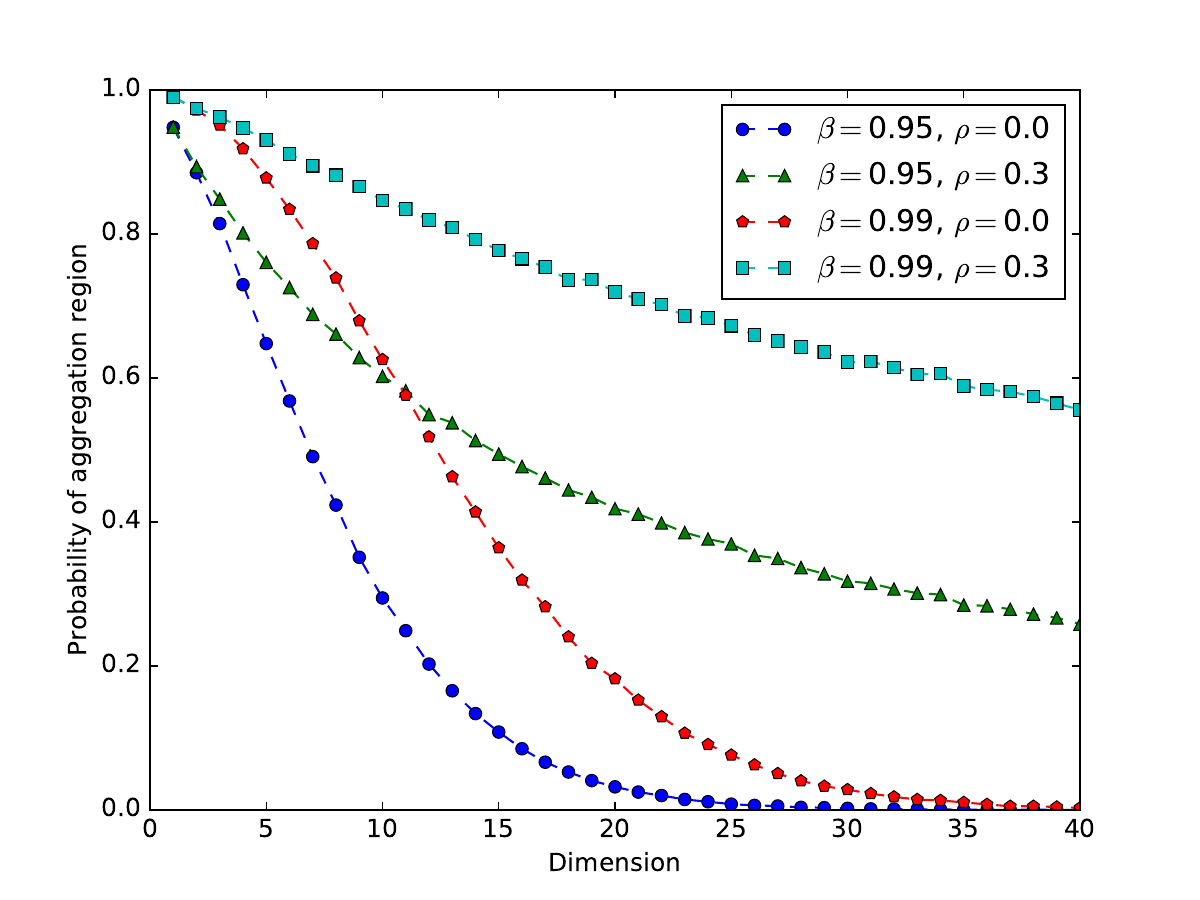}
    \caption{Probability of exact risk region}
    \label{fig:tailrisk-nonrisk-prob-ps}
  \end{subfigure}
  \begin{subfigure}[b]{0.45\textwidth}
    \includegraphics[width=\textwidth]{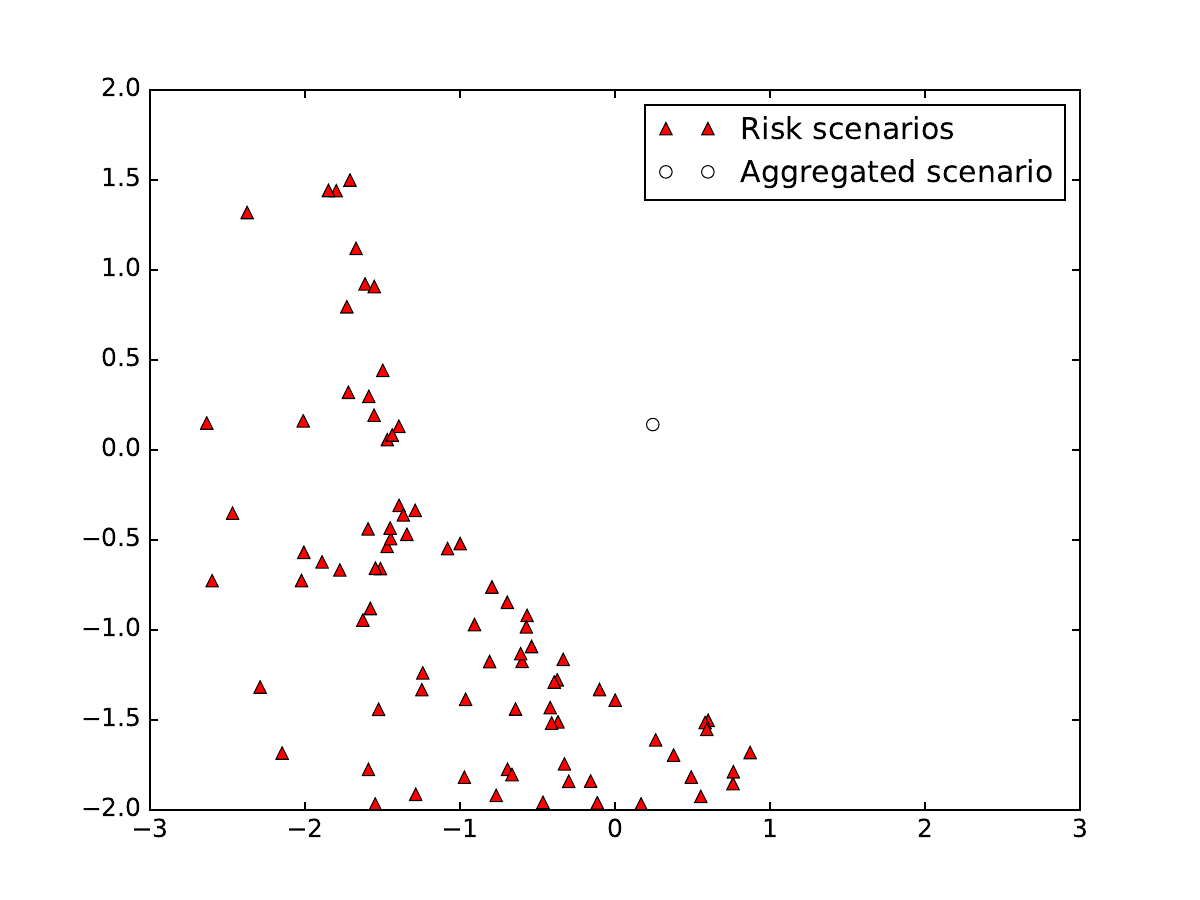}
    \caption{Scenario set constructed by aggregation reduction using conservative aggregation region}
    \label{fig:tailrisk-monotonic-scen-set}
  \end{subfigure}
  \begin{subfigure}[b]{0.45\textwidth}
    \includegraphics[width=\textwidth]{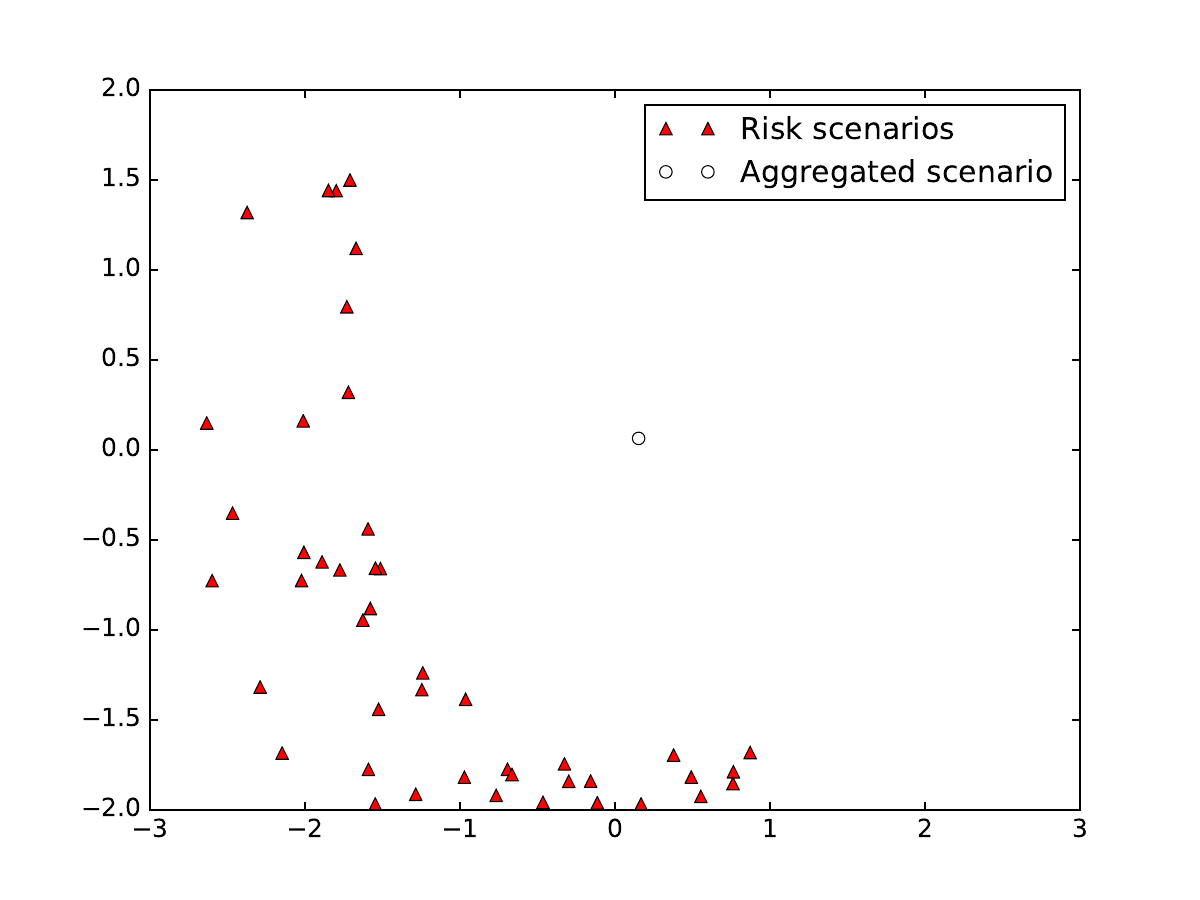}
    \caption{Scenario set constructed by aggregation reduction using exact aggregation region}
    \label{fig:tailrisk-nonrisk-scenset-ps}
  \end{subfigure}
  \caption{Probabilities of conservative and exact aggregation regions}
  \label{fig:tailrisk-probs-riskregion}
\end{figure}

\subsection{Performance of aggregation sampling}
\label{sec:tailrisk-numtests-perf-agg-sampling}

We now test the performance of the aggregation sampling
algorithm using conservative and exact risk regions against
standard Monte Carlo sampling in terms
of the quality of the solutions each method yields.

\paragraph{Experimental Set-up}

We use the following problem:
\begin{align*}
  \minimize[x\geq 0] &\cvar(-x^T\rv)\label{eq:p4}\tag{P}\\
  \text{subject to} \ x^T\mu &\geq t\nonumber\\
  \sum_{i=1}^{d}x_{i} &= 1\\
  x &\geq 0.
\end{align*}
where the asset returns follow a multivariate Normal distribution $\mathcal{N}(\mu, \Sigma)$.
We use two distributions: one of dimension 5 and another of dimension 10.
These distributions have been fitted from monthly return data for randomly selected companies
in the FTSE 100 index. The problem is thus to select a portfolio which minimizes the conditional
value-at-risk of the one-month return, subject to a minimimum expected return of $t$, and no short-selling.
These distributions have been made available online \cite{fairbrother17data}
in an HDF5 file, and can be accessed using the keys ``normal/dim = 5/dist 1'' and ``normal/dim = 10/dist 1''.
We use the target expected one-month return $t=0.005$ which is feasible for the constructed problems.

This problem has been chosen so that we can solve
the problem exactly for Normally distributed returns, and so calculate
the optimality gap for solutions found from solving scenario-based approximations.
The following formula
is easily verified by recalling that for continuous probability distributions,
the $\cvar$ is just the conditional expectation of the random variable
above the $\beta$-quantile (see \cite{Rockafellar00} for instance):
\begin{equation}
\label{eq:true-cvar}
\cvar(-x^T \rv) = (1 - \beta)\mu^T x + \sqrt{x^T\Sigma x} \int_{\Phi^{-1}(\beta)}^\infty z\ d\Phi(z)
\end{equation}
where $\Phi$ denotes the distribution function of the standard Normal distribution.
The problem \eqref{eq:p4} can therefore be solved exactly using an interior point
algorithm and in our experiments we use the software package IPOPT
\cite{WachterBiegler2006} to do this.

Denote by $\{(\orv_{s}, p_{s})\}_{s = 1}^{n}$ a scenario set of size $n$, where
$\orv_{s}$ denotes the vector of asset returns in scenario $s$, and 
$p_{s}$ the corresponding probability. Then, the scenario-based approximation
to \eqref{eq:p4} using this scenario set, is the following linear program:
\begin{align*}
  \minimize[x, y, \alpha] & \alpha + \frac{1}{1 - \beta} \sum_{s=1}^{n} p_{s}y_{s}\\
  \text{subject to } y_{s} & \geq -x^{T}\orv_{s} - \alpha\\
                             x^{T}\mu &\geq t\\
  \sum_{i=1}^{d} x_{i} &= 1\\
  x, y &\geq 0.
\end{align*}
See \cite{Rockafellar00} for more details on how $\cvar$ is linearized for discrete random vectors
in this way. These scenario-based problems are modelled using JuMP \cite{DunningHuchetteLubin2017}
and solved using Gurobi 7.5 \cite{gurobi}.

We are interested in the quality and stability of the solutions that
are yielded by our scenario generation method as compared to standard Monte Carlo
sampling for a given scenario set size. To this end, in each experiment, for a range of scenario set
sizes, we construct 100 scenario sets using sampling and aggregation
sampling with conservative and exact risk regions, solve the resulting
problems, and calculate the optimality gaps for the solutions that
these yield.

Denote by $z^{*}$ the optimal solution value for problem \eqref{eq:p4},
and by $\tilde{x}$ a solution found by solving a scenario-based approximation. 
Then the optimality gap of $\tilde{x}$ is given by
\begin{equation*}
  \cvar(-\tilde{x}^{T}\rv) - z^{*}
\end{equation*}
where $\cvar(-\tilde{x}^{T}\rv)$ calculated using \eqref{eq:true-cvar}.

\paragraph{Results}
\label{sec:tailrisk-numtests-results}

In Figure~\ref{fig:tailrisk-stability} are presented the results of
these stability tests for two different problems. In the first problem we have
$d = 5$ and $\beta=0.95$. In the second problem we have $d = 10$ and $\beta = 0.99$.
For each scenario set size and scenario generation method we have
drawn a box plot of the optimality gap of the 100 constructed scenario sets. 
In the legend of each plot we have given the estimated probability of
the aggregation regions, $a$, and the true optimal value $z^{*}$ is included
in the title. Note that Cons. Agg. sampling and Exact Agg. Sampling
are abbrieviations for, respectively, aggregation sampling using the conservative risk region, and aggregation
sampling using the exact risk region.

\begin{figure}[h]
  \centering
  \begin{subfigure}[b]{0.45\textwidth}
    \includegraphics[width=\textwidth]{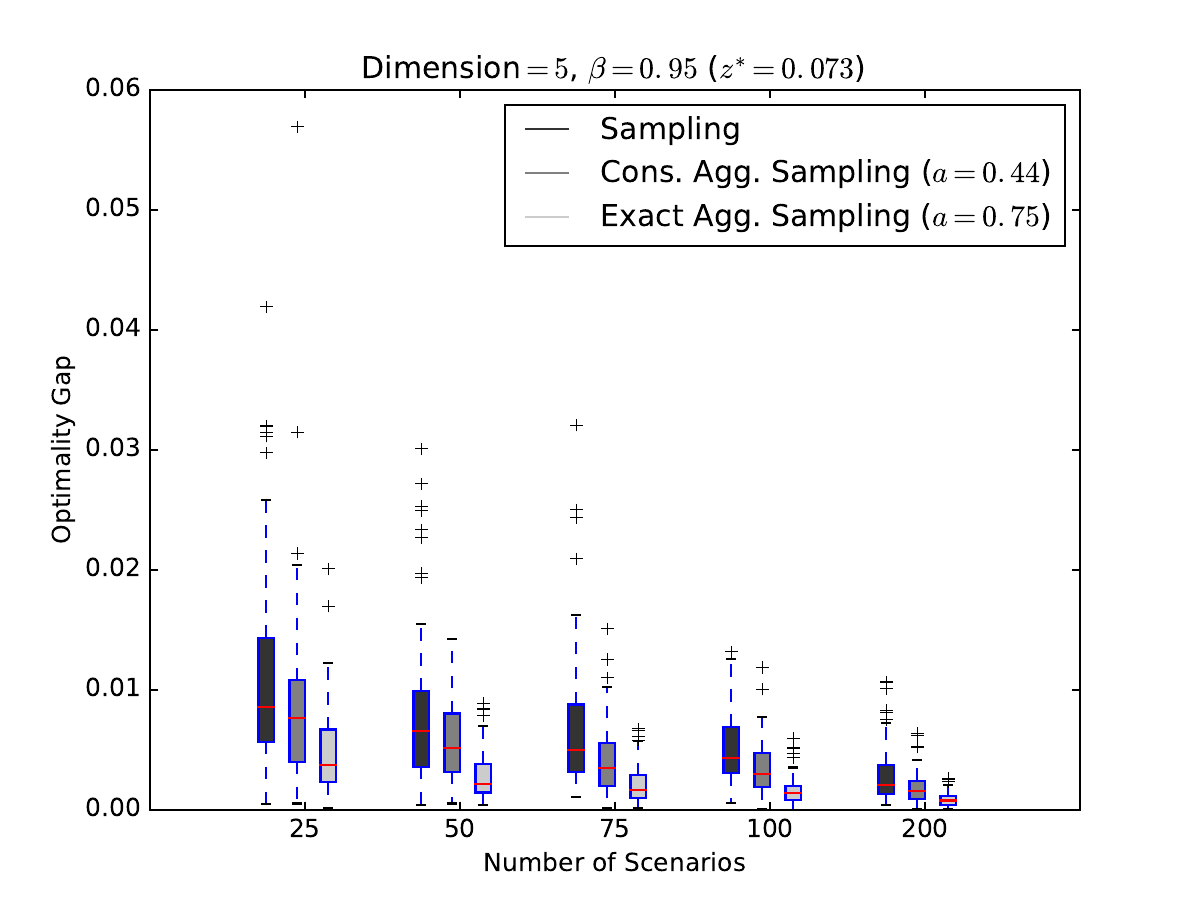}
  \end{subfigure}
  \begin{subfigure}[b]{0.45\textwidth}
    \includegraphics[width=\textwidth]{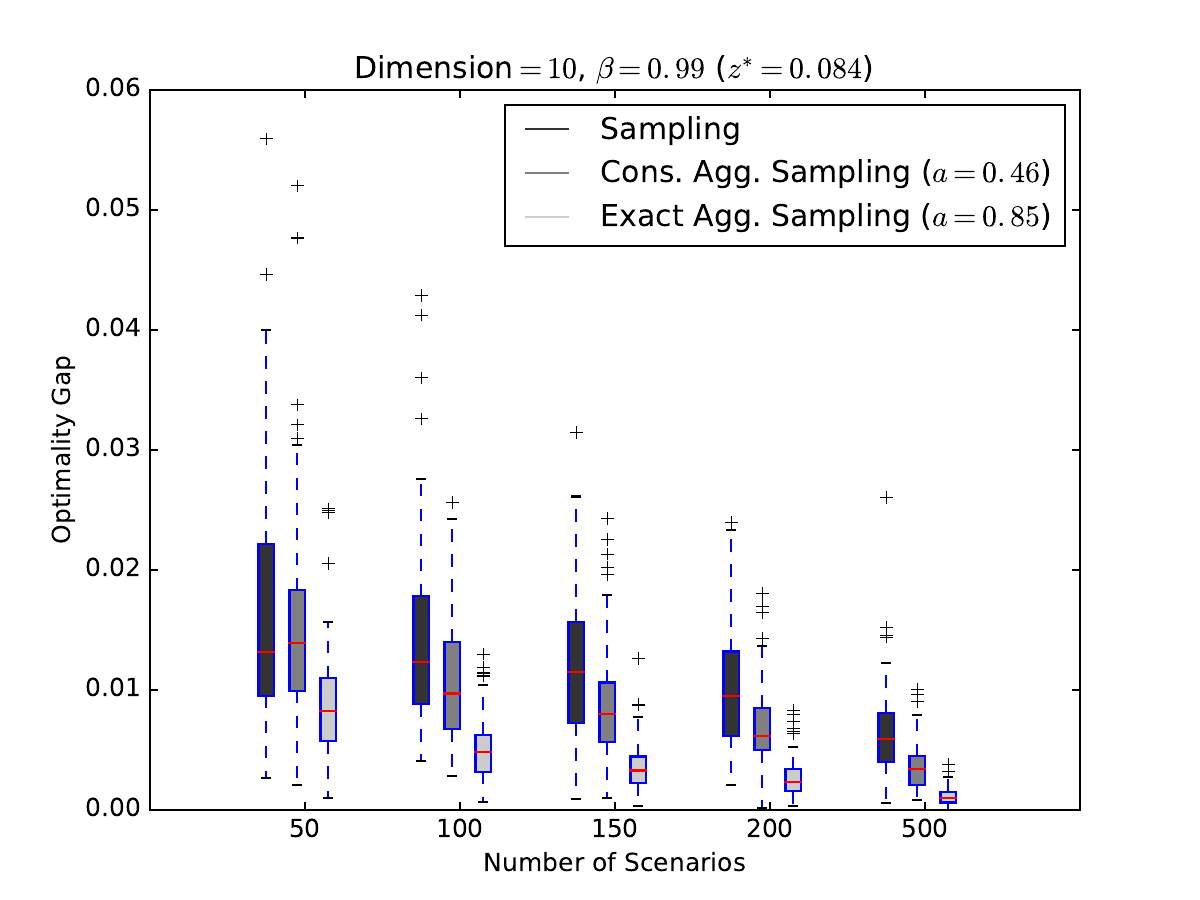}
  \end{subfigure}
  \caption{Error bar plot of optimality gaps yielded by sampling and aggregation sampling}
  \label{fig:tailrisk-stability}
\end{figure}

In both cases, both aggregation sampling methods outperform standard Monte Carlo sampling
for all scenario set sizes in terms of the size and variability of the
calculated optimality gaps. This is because for aggregation sampling we are
effectively sampling more scenarios compared with standard Monte Carlo sampling.
Aggregation sampling with exact risk regions also significantly outperforms 
aggregation sampling with conservative risk regions.
The improved performance can be expected given
that its probability is greater than that of the conservative risk region
which gives a greater effective sample size.

\section{Conclusions}
\label{sec:tailrisk-conclusions}

In this paper we have demonstrated that for stochastic programs which
use a tail risk measure, a significant portion of the support of the
random vector in the problem may not participate in
the calculation of that tail risk measure, whatever feasible decision
is used. As a consequence, for scenario-based problems, if we concentrate our scenarios in the region of the distribution which is important to the problem, the risk region,
we can represent the uncertainty in our problem in a more parsimonious way,
thus reducing the computational burden of solving it.

We have proposed and analyzed two specific methods of scenario generation using
risk regions: aggregation
sampling and aggregation reduction. Both of these methods were shown
to be more effective, in comparison to standard Monte Carlo sampling, as the probability of the non-risk region increases:
in essence the higher this probability the more redundancy there is
in the original distribution. 
The application of our methodology relies on having a 
convenient characterization of a risk region. 
For portfolio selection problems we derived the exact
risk region when returns have an elliptical distribution.
However, a characterization of the exact risk region will 
generally not be possible. Nevertheless, it is sufficient to 
have a conservative risk region. For stochastic programs with monotonic
loss functions, a wide problem class which includes some network
design problems, we were able to derive such a region.

The effectiveness of our methodology depends on
the probability of the aggregation region, that is
the exact or conservative non-risk region used in our scenario generation algorithms.
We observed that for both the stochastic programs with monotonic loss
function and portfolio selection problems
that this probability tends to zero as the dimension
of the random vector in the problem increases. However,
in some circumstances this effect is mitigated. 
We observed that small positive correlations slowed down this convergence
for the portfolio selection problem.

We tested the performance of our
aggregation sampling algorithm for portfolio selection problems
using both the exact non-risk region and the conservative risk
region for monotonic loss functions. This demonstrated a significant
improvement on the performance of standard Monte Carlo sampling, particularly
when an exact non-risk region was used.

The methodology has much potential. 
For some small to moderately-sized
network design problems this methodology could yield much better solutions.
In particular the methodology is agnostic to the presence of integer
variables, and so could be used to solve difficult mixed
integer programs.

In our follow-up paper \cite{Fairbrother15b}
we demonstrate that our methodology may be applied to more
difficult and realistic portfolio selection problems such as those involving integer variables, and for which the asset returns are no longer elliptically
distributed. In the same paper we also discuss some of the technical issues involved in
applying the method, such as finding the conic hull of the feasible region, and methods
of projecting points onto this. We also investigate the use of artificial constraints
as a way of making our methodology more effective.

\section*{Acknowledgements}
\label{sec:tailrisk-acknowledgements}

We would like to thank the reviewers and guest editor for their very thorough feedback which has allowed us to much improve this paper. Thanks also to Burak Buke and David Leslie who also gave feedback on an earlier version of the paper. Finally, we gratefully acknowledge the support of the EPSRC funded EP/H023151/1 STOR-i Centre for Doctoral Training.


\appendix

\section{Continuity of Distribution and Quantile Functions}
\label{sec:tailrisk-convergence-results}

Throughout we use the following set-up: $\mathcal{X} \subset \rk$ a
decision space, $\rv$ a random vector with support
$\rvsup\subset\rd$ defined on a probability space $(\Omega, \mathcal{B}, \mathbb{P})$, and a cost function $f:\mathcal{X}\times\rd \rightarrow \rr$.
The quantity is $f(x, \rv)$ is assumed to be measurable for all $x\in\mathcal{X}$.
In this appendix we prove a series of technical results
related to the continuity of the distribution and quantile
functions for $f(x,\rv)$. These are required for the proofs in
Section \ref{sec:tailrisk-conv-aggr-sampl}.

The following elementary result concerns the continuity of
an expectation function.

\begin{proposition}
  \label{prop:tailrisk-exp-cont}
  Suppose for $g:\mathcal{X}\times\rvsup\rightarrow \rr$, and a given $\bar{x}\in\mathcal{X}$ the following holds:
  \begin{enumerate}[(i)]
  \item $x\mapsto g(x,\rv)$ is continuous at $\bar{x}$ with probability 1,
  \item There exists a neighborhood $W$ of $\bar{x}$ and integrable $h:\rvsup \rightarrow \rr$ such that,
    for all $x\in W$ we have $g(x,\rv) \leq h(\rv)$ with probability 1.
  \end{enumerate}
  Then, $x\mapsto \E{g(x,\rv)}$ is continuous at $\bar{x}$.
\end{proposition}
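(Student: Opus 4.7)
The statement is a continuity claim for an expectation and is precisely the situation where the dominated convergence theorem (DCT) is designed to apply. My plan is therefore to reduce the continuity of $x \mapsto \E{g(x,Y)}$ at $\bar x$ to a sequential statement and invoke DCT on an arbitrary sequence converging to $\bar x$.

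\textbf{Step 1 (sequential reduction).} I would fix an arbitrary sequence $(x_n)_{n \geq 1} \subset \mathcal{X}$ with $x_n \to \bar x$ and aim to show $\E{g(x_n, Y)} \to \E{g(\bar x, Y)}$. Since $\mathcal{X} \subset \rk$ is metrizable, establishing this for every such sequence is equivalent to continuity at $\bar x$.

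\textbf{Step 2 (verifying the DCT hypotheses).} By hypothesis (ii), the neighborhood $W$ of $\bar x$ admits an integrable envelope $h$ with $g(x,Y) \leq h(Y)$ almost surely for every $x \in W$; I shall interpret (and apply) this as a two-sided envelope $|g(x,Y)| \leq h(Y)$ a.s., which is how the result is used in the rest of the paper (in particular, for indicator integrands where $h \equiv 1$ works trivially). Since $x_n \to \bar x$, for all sufficiently large $n$ we have $x_n \in W$, so the random variables $g(x_n, Y)$ are eventually dominated by the fixed integrable $h(Y)$. By hypothesis (i), the map $x \mapsto g(x,Y)$ is continuous at $\bar x$ with probability one, so on a full-probability event we have the pointwise convergence $g(x_n, Y) \to g(\bar x, Y)$ as $n \to \infty$.

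\textbf{Step 3 (apply DCT and conclude).} With the a.s. pointwise convergence from Step 2 and the integrable domination from Step 2, the dominated convergence theorem yields
\begin{equation*}
\lim_{n\to\infty} \E{g(x_n, Y)} = \E{g(\bar x, Y)}.
\end{equation*}
Since the sequence $(x_n)$ was arbitrary, this proves continuity of $x \mapsto \E{g(x,Y)}$ at $\bar x$.

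\textbf{Main obstacle.} There is no deep obstacle: the proposition is essentially a packaging of DCT. The only subtle point is the mild mismatch between the one-sided bound written in (ii) and the two-sided bound that DCT nominally requires; in practice this is not an issue because the intended applications (e.g.\ $g(x,y) = \mathbbm{1}_{\{f(x,y) \leq z\}}$ in Corollary~\ref{cor:tailrisk-prob-cont}) admit a trivial two-sided bound, and in any case one may either apply the result separately to $g$ and $-g$ or combine reverse Fatou with Fatou to recover the full limit once both sides are dominated. The measurability of $g(x,Y)$ for each $x$, noted in the appendix preamble, is what allows the expectations to be defined throughout.
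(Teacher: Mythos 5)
Your proof is correct and follows essentially the same route as the paper's: reduce continuity at $\bar{x}$ to an arbitrary sequence $x_n \to \bar{x}$, note that eventually $x_n \in W$ so the integrands are dominated, and conclude by dominated convergence. Your remark that hypothesis (ii) as written gives only a one-sided bound while DCT needs $|g(x,Y)| \leq h(Y)$ is a fair observation about a minor imprecision that the paper's proof glosses over, and your fix is appropriate.
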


\begin{proof}
  Let $(x_k)_{k=1}^\infty$ be some sequence in $\mathcal{X}$ such that $x_k\rightarrow\bar{x}$
  as $k\rightarrow\infty$. Without loss of generality $x_k\in W$ for
  all $k\in\mathbb{N}$.
  By assumption (i), almost surely we have $g(x_k, \rv) \rightarrow g(\bar{x}, \rv)$
  as $k\rightarrow\infty$. Using assumption (ii) we can apply
  the Lebesgue theorem of dominated convergence so that:
  \begin{align*}
    \lim_{k\rightarrow\infty} \E{g(x_k, \rv)} &= \E{\lim_{k\rightarrow\infty} g(x_k, \rv)}\\
    &=\E{g(\bar{x}, \rv)}
  \end{align*}
  and hence $x\mapsto\E{g(x,\rv)}$ is continuous at $\bar{x}$.
\qed\end{proof}

The continuity of the distribution function immediately follows
from the above proposition.

\begin{corollary}
  \label{cor:tailrisk-prob-cont}
  Suppose for a given $\bar{x}\in\mathcal{X}$ that $x\mapsto f(x,\rv)$ is continuous with probability 1
  at $\bar{x}$, and for $z\in\rr$ the distribution function $F_{\bar{x}}$ is continuous at $z$.
  Then, $x\mapsto F_{x}(z)$ is continuous at $\bar{x}$.
 \end{corollary}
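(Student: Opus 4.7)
The plan is to deduce this from Proposition \ref{prop:tailrisk-exp-cont} by choosing $g(x,y) = \mathbbm{1}_{\{f(x,y) \leq z\}}$, for which $\E{g(x,Y)} = \Prob{f(x,Y)\leq z} = F_{x}(z)$. Condition (ii) of the Proposition is trivial: $g$ takes values in $\{0,1\}$, so it is dominated by the constant integrable function $h \equiv 1$ on any neighborhood of $\bar{x}$. The work therefore reduces to verifying condition (i): that $x \mapsto \mathbbm{1}_{\{f(x,Y)\leq z\}}$ is continuous at $\bar{x}$ with probability 1.

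To handle (i), I would decompose the sample space according to where $f(\bar{x}, Y)$ falls relative to $z$. On the event $\{f(\bar{x},Y) < z\}$, continuity of $x \mapsto f(x,Y)$ at $\bar{x}$ gives a (random) neighborhood of $\bar{x}$ on which $f(x,Y) < z$, so the indicator is identically $1$ there and thus continuous. The symmetric argument handles $\{f(\bar{x},Y) > z\}$, where the indicator is identically $0$ in a neighborhood. The only bad event is $\{f(\bar{x}, Y) = z\}$, where the indicator may jump.

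The main (and only real) obstacle is showing this bad event has probability zero, and this is exactly where the hypothesis on $F_{\bar{x}}$ enters. Continuity of the distribution function $F_{\bar{x}}$ at $z$ gives
\begin{equation*}
\Prob{f(\bar{x},Y) = z} = F_{\bar{x}}(z) - \lim_{z'\uparrow z} F_{\bar{x}}(z') = 0.
\end{equation*}
Intersecting the probability-1 event $\{x \mapsto f(x,Y) \text{ is continuous at } \bar{x}\}$ with $\{f(\bar{x},Y)\neq z\}$ yields a probability-1 event on which $x \mapsto \mathbbm{1}_{\{f(x,Y)\leq z\}}$ is continuous at $\bar{x}$. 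Condition (i) is therefore satisfied, and Proposition \ref{prop:tailrisk-exp-cont} delivers the continuity of $x \mapsto F_x(z)$ at $\bar{x}$.
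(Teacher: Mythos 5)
Your proposal is correct and follows essentially the same route as the paper: take $g(x,Y)=\mathbbm{1}_{\{f(x,Y)\leq z\}}$, dominate it by the constant $1$, use continuity of $F_{\bar{x}}$ at $z$ to discard the null event $\{f(\bar{x},Y)=z\}$, and argue local constancy of the indicator on the remaining events before invoking Proposition~\ref{prop:tailrisk-exp-cont}. No gaps.
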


 \begin{proof}
   Let $g(x,\rv) = \mathbbm{1}_{\{f(x,\rv) \leq z\}}$ so that $F_x(z) = \E{g(x,\rv)}$.
   The function $g(x,\rv)$ is clearly dominated by the integrable
   function $h(\rv) = 1$. It is therefore enough to show that $x\mapsto g(x,\rv)$ is almost surely
   continuous at $\bar{x}$ as the result will then follow from Proposition \ref{prop:tailrisk-exp-cont}.
   
   Since $F_{\bar{x}}$ is continuous at $z$, we must have
   $\Prob{f(\bar{x},\rv) = z} = 0$.  Almost surely, we have that for
   $\omega\in\Omega$ that $x\mapsto f(x,\rv(\omega))$ is continuous at $\bar{x}$. Let's
   first assume that $f(\bar{x},\rv(\omega)) > z$.  In this case, there exist some
   neighborhood $V$ of $\bar{x}$ such that $x\in V \Rightarrow
   f(x,\rv(\omega)) > z$, which in turn implies $\left| g(x,\rv) - g(\bar{x},
     \rv) \right| = 0$. 
   Hence $x\mapsto g(x,\rv(\omega))$ is continuous at
   $\bar{x}$. The same argument holds if $f(\bar{x},\rv(\omega)) < z$.  Hence,
   with probability 1, $x\mapsto g(x,\rv)$ is continuous at $\bar{x}$.
 \qed\end{proof}

Continuity of the quantile function follows from the continuity of
the distribution function but requires that the distribution
function is strictly increasing at the required quantile.

\begin{proposition}
  \label{prop:tailrisk-quant-cont}
  Suppose for some $\bar{x}\in\mathcal{X}$, and $z=F_{\bar{x}}^{-1}(\beta)$ that the conditions of
  Corollary \ref{cor:tailrisk-prob-cont} hold, and in addition
  that $F_{\bar{x}}$ is strictly increasing at $F_{\bar{x}}^{-1}(\beta)$, that is
  for all $\epsilon > 0$
    \begin{equation*}
      F_{\bar{x}}\left(F_{\bar{x}}^{-1}(\beta) - \epsilon\right) < \beta < F_{\bar{x}}\left(F_{\bar{x}}^{-1}(\beta) + \epsilon\right).
    \end{equation*}
  Then $x \mapsto F_x^{-1}(\beta)$ is continuous at $\bar{x}$.
\end{proposition}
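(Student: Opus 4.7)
The plan is to use the strict monotonicity of $F_{\bar x}$ at the $\beta$-quantile to sandwich $\beta$ by values $F_{\bar x}(F_{\bar x}^{-1}(\beta)\pm \epsilon)$ that are bounded away from $\beta$, then propagate this sandwich to nearby $x$ via the pointwise continuity of the distribution function (Corollary \ref{cor:tailrisk-prob-cont}), and finally read off the conclusion about the quantile by inverting.

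More concretely, I would fix $\epsilon_0>0$ and argue as follows. First, since $F_{\bar x}$ is a distribution function it has at most countably many discontinuities, so I can choose $0 < \epsilon < \epsilon_0$ such that $F_{\bar x}$ is continuous at both $z_- := F_{\bar x}^{-1}(\beta) - \epsilon$ and $z_+ := F_{\bar x}^{-1}(\beta) + \epsilon$. By the assumed strict monotonicity of $F_{\bar x}$ at its $\beta$-quantile,
\[
F_{\bar x}(z_-) < \beta < F_{\bar x}(z_+),
\]
so the number
\[
\delta := \min\{\beta - F_{\bar x}(z_-),\; F_{\bar x}(z_+) - \beta\}
\]
is strictly positive. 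By Corollary \ref{cor:tailrisk-prob-cont}, applied separately at $z_-$ and $z_+$ (both of which are continuity points of $F_{\bar x}$, and the remaining hypothesis of that corollary is exactly what we have assumed), the functions $x\mapsto F_x(z_-)$ and $x\mapsto F_x(z_+)$ are both continuous at $\bar x$. Hence there is a neighborhood $W$ of $\bar x$ such that for every $x \in W$,
\[
|F_x(z_-) - F_{\bar x}(z_-)| < \delta \quad\text{and}\quad |F_x(z_+) - F_{\bar x}(z_+)| < \delta,
\]
which gives $F_x(z_-) < \beta < F_x(z_+)$ throughout $W$.

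The final step is to translate these distribution-function inequalities into bounds on $F_x^{-1}(\beta)$. The inequality $F_x(z_+) > \beta$ means that $z_+$ lies in the set $\{z: F_x(z)\geq \beta\}$, so by definition of the generalized inverse $F_x^{-1}(\beta) \leq z_+ = F_{\bar x}^{-1}(\beta) + \epsilon$. Conversely, $F_x(z_-) < \beta$ means $z_-$ does not lie in $\{z:F_x(z)\geq \beta\}$, so $F_x^{-1}(\beta) \geq z_- = F_{\bar x}^{-1}(\beta) - \epsilon$. Together these yield $|F_x^{-1}(\beta) - F_{\bar x}^{-1}(\beta)| \leq \epsilon < \epsilon_0$ for all $x \in W$, which is continuity of $x\mapsto F_x^{-1}(\beta)$ at $\bar x$.

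The only subtle point is the initial choice of $\epsilon$: we need $F_{\bar x}$ to be continuous at $z_\pm$ to legitimately invoke Corollary \ref{cor:tailrisk-prob-cont} there, but the given strict monotonicity hypothesis is phrased for every $\epsilon>0$. I expect that to be the main obstacle in a careless write-up; it is handled cleanly by shrinking $\epsilon$ slightly to avoid the (at most countable) set of discontinuities of $F_{\bar x}$, which does not weaken the final continuity conclusion because $\epsilon_0$ was arbitrary.
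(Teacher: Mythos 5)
Your proof is correct, but it takes a genuinely different route from the paper's. The paper argues by contradiction and invokes continuity of $x\mapsto F_x(z)$ only at the single point $z=F_{\bar{x}}^{-1}(\beta)$: it supposes some $x'$ arbitrarily close to $\bar{x}$ has $\bigl|F_{x'}^{-1}(\beta)-F_{\bar{x}}^{-1}(\beta)\bigr|>\epsilon$ and tries to contradict the bound $\bigl|F_{x'}\bigl(F_{\bar{x}}^{-1}(\beta)\bigr)-F_{\bar{x}}\bigl(F_{\bar{x}}^{-1}(\beta)\bigr)\bigr|<\gamma$. You instead give a direct sandwich: you control $F_x$ at the two flanking points $z_\pm=F_{\bar{x}}^{-1}(\beta)\pm\epsilon$ and read off $z_-\leq F_x^{-1}(\beta)\leq z_+$ from the definition of the generalized inverse. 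Your version buys two things. First, it squarely addresses the hypothesis mismatch: Corollary \ref{cor:tailrisk-prob-cont} requires $F_{\bar{x}}$ to be continuous at the evaluation point, which the proposition only assumes at $F_{\bar{x}}^{-1}(\beta)$ itself; your countability argument for shrinking $\epsilon$ so that $z_\pm$ are continuity points is exactly the right fix, and it is in fact the same device the paper uses in the proof of Theorem \ref{thm:tailrisk-conv-unif-quant}. Second, the two-point control is what actually pins down the quantile: knowing $F_{x'}$ only at $F_{\bar{x}}^{-1}(\beta)$ does not by itself bound $F_{x'}^{-1}(\beta)$ from above (the paper's closing inequality concerns $F_{\bar{x}}\bigl(F_{x'}^{-1}(\beta)\bigr)$ while its continuity bound concerns $F_{x'}\bigl(F_{\bar{x}}^{-1}(\beta)\bigr)$, and bridging the two requires an extra step that the write-up elides). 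So your argument is not just an alternative; it is the more airtight of the two.
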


\begin{proof}
  Assume $x\mapsto F_x^{-1}(\beta)$ is not continuous at $\bar{x}$. This
  means there exists $\epsilon > 0$ such that for all neighborhoods
  $W$ of $\bar{x}$
  \begin{equation*}
    \text{there exists } x' \in W \text{ such that } \left| F_{\bar{x}}^{-1}(\beta) - F_{x'}^{-1}(\beta)\right| > \epsilon.
  \end{equation*}
  Now set,
  \begin{equation*}
    \gamma := \min \{ \beta - F_{\bar{x}}\left(F_{\bar{x}}^{-1}(\beta) - \epsilon\right), F_{\bar{x}}\left(F_{\bar{x}}^{-1}(\beta) + \epsilon\right) - \beta\} > 0 \qquad \text{since } F_{\bar{x}} \text{ strictly increasing at } F_{\bar{x}}^{-1}\left(\beta\right).
  \end{equation*}
  By the continuity of $x \mapsto F_{x}\left(F_{\bar{x}}^{-1}(\beta)\right)$ at $\bar{x}$ there
  exists $W$ a neighborhood of $\bar{x}$, such that:
  \begin{equation}
    \label{eq:tailrisk-quantile-cont-ineq}
    x\in W \Longrightarrow \left| F_x\left(F_{\bar{x}}^{-1}(\beta)\right) - F_{\bar{x}}\left(F_{\bar{x}}^{-1}(\beta)\right)\right| < \gamma.
  \end{equation}
  But for the $x'$ identified above we have
  \begin{equation*}
    F_{x'}^{-1}(\beta) < F_{\bar{x}}^{-1}\left(\beta\right) - \epsilon \qquad
    \text{or} \qquad F_{x'}^{-1}(\beta) > F_{\bar{x}}^{-1}\left(\beta\right) + \epsilon
  \end{equation*}
  and so given that $F_{\bar{x}}$ is non-decreasing, and by the
  definition of $\gamma$ we must have:
  \begin{equation*}
    \left| F_{\bar{x}}\left(F_{\bar{x}}^{-1}(\beta)\right) - F_{\bar{x}}\left(F_{x'}^{-1}(\beta)\right) \right| \geq \gamma
  \end{equation*}
  which contradicts \eqref{eq:tailrisk-quantile-cont-ineq}.
\qed\end{proof}

Recall, that for a sequence of i.i.d. random vectors
$\rv_{1}, \rv_{2}, \ldots$ with the same distribution as $\rv$,
we define the sampled distribution function as follows:

\begin{equation*}
  F_{n,x}(z) := \frac{1}{n}\sum_{i=1}^n \mathbbm{1}_{\{f(x,\rv_i) \leq z\}}. 
\end{equation*}

The final result concerns the continuity of the sampled
distribution function.

\begin{lemma}
  \label{lem:tailrisk-conv-cont}
  Suppose for $g:\mathcal{X}\times\rvsup\rightarrow \rr$, and $\bar{x}\in\mathcal{X}$ the conditions
  from Proposition \ref{prop:tailrisk-exp-cont} hold.
  Then for all $\epsilon > 0$ there exists a neighborhood $W$, of $\bar{x}$, such that
  with probability 1  \begin{equation*}
    \limsup_{n\to\infty}\sup_{x\in W\cap\mathcal{X}}\left|\frac{1}{n}\sum_{i=1}^n g(x, \rv_i) - \frac{1}{n}\sum_{i=1}^n g(\bar{x}, \rv_i)\right| < \epsilon.
  \end{equation*}
  In particular, if $x\mapsto f(x,\rv)$ is continuous at $\bar{x}$ with
  probability 1 and $F_{\bar{x}}$ is continuous at $z\in\rr$ then for all $\epsilon>0$ there exists a neighborhood $W$,
  of $\bar{x}$ such that with probability 1
  \begin{equation}
    \limsup_{n\to\infty}\sup_{x\in W\cap\mathcal{X}} \left| F_{n,x}(z) - F_{n,\bar{x}}(z)\ \right| < \epsilon. \label{eq:tailrisk-cont-sampled-dist-fn-appendix}
  \end{equation}
\end{lemma}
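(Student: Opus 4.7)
The plan is to reduce the uniform statement to a single application of the strong law of large numbers applied to the \emph{local oscillation} of $g$. Specifically, for a neighborhood $W$ of $\bar{x}$ (contained in the neighborhood provided by condition~(ii)), define
\begin{equation*}
M_{W}(Y) := \sup_{x \in W \cap \mathcal{X}} |g(x,Y) - g(\bar{x}, Y)|.
\end{equation*}
The key deterministic inequality is
\begin{equation*}
\sup_{x \in W \cap \mathcal{X}} \left| \frac{1}{n} \sum_{i=1}^{n} g(x, Y_{i}) - \frac{1}{n} \sum_{i=1}^{n} g(\bar{x}, Y_{i}) \right| \leq \frac{1}{n} \sum_{i=1}^{n} M_{W}(Y_{i}),
\end{equation*}
so it suffices to control the right-hand side in the limit.

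First I would take a nested sequence of neighborhoods $W_{1} \supseteq W_{2} \supseteq \cdots$ with $\bigcap_{k} W_{k} = \{\bar{x}\}$, all contained in the neighborhood from Proposition~\ref{prop:tailrisk-exp-cont}(ii). By the almost-sure continuity of $x \mapsto g(x,Y)$ at $\bar{x}$, we get $M_{W_{k}}(Y) \to 0$ almost surely as $k \to \infty$. Using the domination (interpreted as $|g(x,Y)| \leq h(Y)$ on $W$, which is immediate in the indicator case of interest), we have $M_{W_{k}}(Y) \leq 2h(Y)$, and dominated convergence gives $\E{M_{W_{k}}(Y)} \to 0$. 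Given $\epsilon > 0$, pick $k$ with $\E{M_{W_{k}}(Y)} < \epsilon$ and set $W := W_{k}$. Applying the strong law of large numbers to the i.i.d.\ sequence $M_{W}(Y_{i})$ yields that with probability $1$, $\frac{1}{n}\sum_{i=1}^{n} M_{W}(Y_{i}) \to \E{M_{W}(Y)} < \epsilon$, which combined with the key inequality above gives the desired $\limsup$ bound.

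For the distribution-function corollary~\eqref{eq:tailrisk-cont-sampled-dist-fn-appendix}, I would apply the general statement with $g(x,y) = \mathbbm{1}_{\{f(x,y) \leq z\}}$, so that $\frac{1}{n}\sum_{i=1}^{n} g(x, Y_{i}) = F_{n,x}(z)$. Condition~(ii) is automatic because $|g| \leq 1$. The almost-sure continuity of $x \mapsto g(x, Y)$ at $\bar{x}$ follows exactly as in the proof of Corollary~\ref{cor:tailrisk-prob-cont}: continuity of $F_{\bar{x}}$ at $z$ gives $\Prob{f(\bar{x}, Y) = z} = 0$, and on the (probability-one) event where $f(\bar{x}, Y) \neq z$ and $x \mapsto f(x,Y)$ is continuous at $\bar{x}$, the indicator $\mathbbm{1}_{\{f(x,Y) \leq z\}}$ is locally constant near $\bar{x}$.

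The main obstacle will be the measurability of $M_{W}(Y)$, since the supremum is taken over an uncountable set $W \cap \mathcal{X}$ and the hypotheses only give measurability of $g(x, \cdot)$ for each fixed $x$. This can be handled by exploiting the separability of $\mathcal{X} \subset \rk$: on the probability-one event where $x \mapsto g(x, Y)$ is continuous at $\bar{x}$, the supremum over $W \cap \mathcal{X}$ coincides with the supremum over a countable dense subset, which is measurable. After redefining $M_{W}$ on the null set where continuity fails, the resulting random variable is measurable and the dominated convergence and strong law of large numbers arguments apply as stated.
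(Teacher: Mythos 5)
Your proposal is correct and follows essentially the same route as the paper's proof: both define the local oscillation $\sup_{x\in W\cap\mathcal{X}}|g(x,Y)-g(\bar{x},Y)|$ over a shrinking sequence of neighborhoods, use almost-sure continuity plus domination and the dominated convergence theorem to make its expectation smaller than $\epsilon$, and then apply the strong law of large numbers to the i.i.d.\ oscillation variables. The only cosmetic difference is that the paper handles measurability of the supremum by citing \cite[Theorem~7.37]{ShapiroEA09} rather than your separability argument.
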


\begin{proof}
  Fix $\bar{x}\in\mathcal{X}$, and $\epsilon > 0$. Let $(\gamma_k)_{k=1}^\infty$ be any sequence of positive
  numbers converging to zero and define 
  \begin{align*}
    V_k &:= \{ x\in \mathcal{X}: \norm[x - \bar{x}] \leq \gamma_k \},\\
    \delta_k(\rv) &:= \sup_{x\in V_k} \left| g(x,\rv) - g(\bar{x}, \rv)\ \right|.
  \end{align*}
  Note first that the quantity $\delta_k(\rv)$ is Lebesgue measurable
  (see \cite[Theorem~7.37]{ShapiroEA09} for instance).
  By assumption (i) of Proposition~\ref{prop:tailrisk-exp-cont} the mapping $x \mapsto g(x,\rv)$ is continuous at $\bar{x}$ with probability 1,
  hence $\delta_k(\rv) \rightarrow 0$ almost surely as $k \rightarrow \infty$. 
  Now, since $\left|g(x,\rv)\right| \leq h(\rv)$ we must have $|\delta_k(\rv)| \leq 2h(\rv)$, therefore,
  by the Lebesgue dominated convergence theorem, we have that
  \begin{equation}
    \label{eq:tailrisk-delta-limit}
    \lim_{k\rightarrow\infty} \E{\delta_k(\rv)} = \E{\lim_{k\rightarrow\infty}\ \delta_k(\rv)} = 0.
  \end{equation}
  Note also that
  \begin{equation*}
    \sup_{x\in V_k}\left|\frac{1}{n}\sum_{i=1}^n g(x, \rv_i) - \frac{1}{n}\sum_{i=1}^n g(\bar{x}, \rv_i)\right| \leq \frac{1}{n}\sum_{i=1}^n \sup_{x\in V_k}\left|g(x,\rv_i)- g(\bar{x}, \rv_i)\right|
  \end{equation*}
  and so
  \begin{equation*}
    \sup_{x\in V_k}\left|\frac{1}{n}\sum_{i=1}^n g(x, \rv_i) - \frac{1}{n}\sum_{i=1}^n g(\bar{x}, \rv_i)\right| \leq \frac{1}{n}\sum_{i=1}^n\delta_k(\rv_i).
  \end{equation*}
  Since the sequence of random vectors $\rv_1, \rv_2, \ldots$ is i.i.d. we have by the
  strong law of large numbers that the right-hand side of \eqref{eq:tailrisk-sample-dist-ineq}
  converges with probability 1 to $\E{\delta_k(\rv)}$ as $n\rightarrow\infty$. Hence, with probability 1
  \begin{equation}
    \label{eq:tailrisk-sample-dist-ineq}
    \limsup_{n\to\infty} \sup_{x\in V_{k}} \left| \frac{1}{n}\sum_{i=1}^{n} g(x, \rv_{i}) - \frac{1}{n} \sum_{i=1}^{n} g(\bar{x}, \rv_{i}) \right| \leq \E{\delta_{k}(\rv)}.
  \end{equation}
  By \eqref{eq:tailrisk-delta-limit}  we can pick $k\in\mathbb{N}$ such that $\E{\delta_{k}(\rv)} < \epsilon$ and so setting $W=V_{k}$ we have by
  \eqref{eq:tailrisk-sample-dist-ineq} with probability 1
  \begin{equation*}
    \limsup_{n\rightarrow\infty}\sup_{x\in W\cap\mathcal{X}} \left| \frac{1}{n}\sum_{i=1}^n g(x, \rv_i) - \frac{1}{n}\sum_{i=1}^n g(\bar{x}, \rv_i) \right| < \epsilon.
  \end{equation*}
  The result \eqref{eq:tailrisk-cont-sampled-dist-fn-appendix} follows immediately
  as the special case $g(x,\rv) = \mathbbm{1}_{\{f(x,\rv)\leq z\}}$.
\qed\end{proof}

\section{Convex cone results}
\label{sec:tailrisk-convex}

The results in this appendix relate to the characterization of the
non-risk region for the portfolio selection problem with
elliptically distributed returns.

The following two propositions give properties
about projections onto convex cones which are required in the proof of
the main results of this appendix.

\begin{proposition}
  \label{prop:convex-cone-proj}
  Suppose $K\subset\rd$ is a convex cone. Then, for all $\orv\in\rd$:
  \begin{equation*}
    p_K(\orv)^T \left(\orv - p_K(\orv)\right) = 0.
  \end{equation*}
\end{proposition}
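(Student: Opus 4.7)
The plan is to exploit the cone structure of $K$ by restricting the projection problem to the ray $\{\lambda\, p_K(y) : \lambda \geq 0\}$, which lies entirely inside $K$, and then apply a first-order optimality argument in the single variable $\lambda$. The identity should then drop out of the expansion of the squared distance.

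First, I would dispose of the trivial case $p_K(y) = 0$, in which the equality holds immediately regardless of $y$. So assume $p_K(y) \neq 0$. Since $K$ is a convex cone and $p_K(y) \in K$, the scaling $\lambda\, p_K(y)$ belongs to $K$ for every $\lambda \geq 0$. Define
\begin{equation*}
  g(\lambda) \;=\; \norm[y - \lambda\, p_K(y)]^{2} \;=\; \norm[y]^{2} - 2\lambda\, y^{T} p_K(y) + \lambda^{2} \norm[p_K(y)]^{2},\qquad \lambda \geq 0.
\end{equation*}

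By the defining property of the projection in Definition~\ref{def:proj}, together with the fact that $\lambda\, p_K(y) \in K$ for all $\lambda \geq 0$, the function $g$ attains its minimum over $[0,\infty)$ at $\lambda = 1$. Since $\lambda = 1$ is an interior point of $[0,\infty)$, the standard first-order condition for a smooth univariate function gives $g'(1) = 0$. Differentiating the quadratic expression for $g$ and evaluating at $\lambda = 1$ yields
\begin{equation*}
  0 \;=\; g'(1) \;=\; -2\, y^{T} p_K(y) + 2\, \norm[p_K(y)]^{2} \;=\; -2\, p_K(y)^{T}\bigl(y - p_K(y)\bigr),
\end{equation*}
from which the claimed identity follows.

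There is not really a hard step here; the only conceptual point worth noting is the observation that the cone property lets one vary $\lambda$ freely in a two-sided neighbourhood of $1$ (not just on $[0,1]$), so that the minimum at $\lambda = 1$ forces the derivative to vanish rather than merely be non-negative. An equivalent route would be to invoke the variational inequality $(y - p_K(y))^{T}(x - p_K(y)) \leq 0$ for all $x \in K$ and test it at the two cone points $x = 0$ and $x = 2\,p_K(y)$ to obtain the two opposite inequalities; this also uses the cone property in the same essential way.
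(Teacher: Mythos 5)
Your proof is correct. It rests on the same essential idea as the paper's proof --- perturbing along the ray $\{\lambda\, p_K(y) : \lambda \geq 0\}$, which the cone property keeps inside $K$ --- but the execution is different and somewhat cleaner. The paper argues by contradiction: it first establishes the auxiliary fact $p_K(y)^T y \geq 0$ (needed to guarantee that its competitor point $\tilde{x} = \frac{p_K(y)^T y}{\norm[p_K(y)]^2}\, p_K(y)$, which is precisely the minimizer of your $g$ over the ray, actually lies in $K$), and then verifies by an explicit norm expansion that $\tilde{x}$ is strictly closer to $y$ than $p_K(y)$ whenever the claimed identity fails. Your first-order condition $g'(1)=0$ sidesteps both steps: because $\lambda=1$ sits in a two-sided neighbourhood of admissible scalings, you get an equality directly rather than two one-sided inequalities, and no sign condition on $p_K(y)^T y$ is required. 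You are also right to treat $p_K(y)=0$ separately, a degenerate case the paper's formula for $\tilde{x}$ silently excludes. The alternative you sketch at the end (testing the variational inequality at $x=0$ and $x=2\,p_K(y)$) is the standard textbook route and is equally valid; note only that it requires $0 \in K$, which holds here since $K$ is a closed convex cone.
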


\begin{proof}
  First note that we must have $p_{K}(\orv)^{T}\orv \geq 0$. If this is not the the case
  then
  \begin{equation*}
    \norm[\orv - p_{K}(\orv)]^{2} = \norm[p_{K}(\orv)]^{2} - 2 p_{K}(\orv)^{T} \orv + \norm[\orv]^{2}
    > \norm[\orv]^{2} = \norm[\orv - 0]^{2}
  \end{equation*}
  which contradicts the definition of $p_{K}(\orv)$ since $0\in K$.
  Now assume that $p_{K}(\orv)^{T}\left(\orv - p_{K}(\orv)\right) \neq 0$, and set
  $\tilde{x} = \frac{p_{K}(\orv)^{T}\orv}{\norm[p_{K}(\orv)]^{2}} p_{K}(\orv)\in K$. Now,
  \begin{equation*}
    p_{K}(\orv)^{T}(\tilde{x} - \orv) = p_{K}^{T}\orv - p_{K}^{T}\orv = 0.
  \end{equation*}
  By assumption $p_{k}^{T}\orv \neq \norm[p_{K}(\orv))]^{2}$, and so
  $\tilde{x} \neq p_{K}(\orv)$, hence
  \begin{align*}
    \norm[p_{K}(\orv) - \orv]^{2} &= \norm[(p_{K}(\orv) - \tilde{x}) + (\tilde{x} -\orv)]^{2}\\
    &= \norm[(p_{K}(\orv) - \tilde{x})]^{2}  - 2 \underbrace{(p_{K}(\orv)-\tilde{x})^{T}(\tilde{x} - \orv)}_{= 0} + \norm[(\tilde{x} - \orv)]^{2} 
    > \norm[(\tilde{x} - \orv)]^{2}
  \end{align*}
  which, again, contradictions the definition of $p_{K}(\orv)$ since $\tilde{x}\in K$.
\qed\end{proof}

\begin{proposition}
  \label{prop:proj-scalar-prod}
  Suppose $K\subset\rd$ be a convex cone and $x\in K$. Then for any $\orv\in\rd$
  \begin{equation*}
    x^T\orv \leq x^Tp_K(\orv).
  \end{equation*}
\end{proposition}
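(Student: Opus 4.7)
The plan is to prove this by a standard first-variation argument, exploiting the minimality property that defines the projection $p_K(y)$ together with the closure of $K$ under non-negative combinations. The key idea is that we can perturb $p_K(y)$ in the direction of $x$ by any non-negative amount and remain inside $K$, which gives us a one-sided inequality when we compare distances.

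First, I would verify that for any $t \geq 0$ the point $p_K(y) + t x$ lies in $K$. Since $K$ is a convex cone and $p_K(y), x \in K$, the midpoint $\tfrac{1}{2}(p_K(y) + tx) \cdot 2$ argument gives closure under addition (the convex combination $\tfrac{1}{2}p_K(y) + \tfrac{1}{2}(tx)$ lies in $K$ by convexity, and then scaling by $2$ keeps us in $K$ by the cone property), and $tx \in K$ by the cone property applied to $x$. So $p_K(y) + tx \in K$ for every $t \geq 0$.

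Next, by Definition~\ref{def:proj} the projection minimizes squared distance to $y$ over $K$, so
\begin{equation*}
\norm[p_K(y) - y]^2 \;\leq\; \norm[p_K(y) + tx - y]^2 \;=\; \norm[p_K(y) - y]^2 + 2t\, x^T(p_K(y) - y) + t^2 \norm[x]^2
\end{equation*}
for all $t \geq 0$. Cancelling and dividing by $t > 0$ yields $0 \leq 2 x^T(p_K(y) - y) + t \norm[x]^2$, and letting $t \downarrow 0$ gives $x^T(p_K(y) - y) \geq 0$, which rearranges to $x^T y \leq x^T p_K(y)$, as required.

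There is no real obstacle here; the only step that needs a moment's care is justifying that $p_K(y) + tx \in K$, which follows from the convex-cone structure. Note that Proposition~\ref{prop:convex-cone-proj} is not strictly needed for this argument, though one could alternatively derive the result by decomposing $y = p_K(y) + (y - p_K(y))$ and using that $y - p_K(y)$ lies in the polar cone of $K$ (which is essentially what the inequality we are proving asserts); the direct perturbation argument above is the shortest route.
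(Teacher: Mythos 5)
Your proof is correct, but it takes a genuinely different route from the paper's. You argue directly: since $K$ is a convex cone containing both $p_K(y)$ and $x$, it is closed under addition, so $p_K(y)+tx\in K$ for all $t\geq 0$; minimality of the projection then gives $0\leq 2t\,x^T(p_K(y)-y)+t^2\norm[x]^2$, and dividing by $t$ and letting $t\downarrow 0$ yields the claim. The paper instead argues by contradiction: assuming $\tilde{x}^Ty>\tilde{x}^Tp_K(y)$ for some $\tilde{x}\in K$, it perturbs along the segment $\lambda\tilde{x}+(1-\lambda)p_K(y)$ (using only convexity of $K$ to stay inside) and exhibits an explicit small $\lambda>0$ for which this point is strictly closer to $y$ than $p_K(y)$ is. In simplifying the cross term, the paper's computation implicitly invokes the orthogonality identity $p_K(y)^T\bigl(y-p_K(y)\bigr)=0$ of Proposition~\ref{prop:convex-cone-proj}, so your remark that this lemma is not needed is a real structural difference, not just a stylistic one. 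What each approach buys: yours is shorter, avoids contradiction, and is self-contained, at the cost of using the full cone structure (closure under addition) to legitimize the perturbation direction $+tx$; the paper's perturbation needs only convexity of $K$ but pays for it by leaning on the separately proved orthogonality property, and it keeps the two projection lemmas in a uniform ``perturb and contradict minimality'' style. Both are sound; yours is arguably the cleaner standalone proof of this particular statement.
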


\begin{proof}
  The result holds trivially if $\orv\in K$ so we assume $\orv\notin K$.
  Assume there exists $\tilde{x}\in K$ such that $\tilde{x}^{T}\orv > \tilde{x}^{T}p_{K}(\orv)$.
  For all $0 \leq \lambda \leq 1$ we have $\lambda x + (1 - \lambda)p_{K}(\orv)\in K$.
  Now,
  \begin{align*}
    \norm[\left(\lambda\tilde{x} + (1-\lambda)p_{K}(\orv)\right) - \orv]^{2} - \norm[\orv -p_{K}(\orv)]^{2} &=\lambda^{2}\norm[\tilde{x} - p_{K}(\orv)]^{2} + 2\lambda(\tilde{x} - p_{K}(\orv))^{T}(p_{K}(\orv)-\orv) \\ 
    &= \lambda^{2}\norm[\tilde{x} - p_{K}(\orv)]^{2} - 2\lambda \underbrace{\tilde{x}^{T}(\orv -p_{K}(\orv))}_{> 0 \text{ by assumption}}.
  \end{align*}
  That is, for $0 < \lambda <  \frac{\tilde{x}^{T}(\orv - p_{K}(\orv))}{2\norm[p_{K}(\orv) - \tilde{x}]}$ we have 
  $\norm[\lambda\tilde{x} + (1-\lambda)p_{K}(\orv) - \orv] < \norm[\orv - p_{K}(\orv)]$ which contradicts
  the definition of $p_{K}(\orv)$.
\qed\end{proof}

The next two results describe the non-risk region for the
portfolio selection problem with elliptically distributed
returns when $\mathcal{X}$ is a convex set.
The first describes the exact non-risk region 
for elliptically distributed returns in the case $P = I$,
and the second generalizes the result to any non-singular matrix.

\begin{theorem}
  \label{thr:cone-proj-ellipse-1}
    Suppose $\mathcal{X}\subset\rd$ is convex and $\mu\in\rd$, and let $\mathcal{A} := \{\orv \in\rd: x^T(\orv -\mu) < \norm[x]\ \alpha\ \forall x \in \mathcal{X}\}$ and $\mathcal{B} := \{\orv\in\rd: \norm[p_K(\orv - \mu)] < \alpha\}$
  where $K = \conic{\mathcal{X}}$. Then, $\mathcal{A} = \mathcal{B}$.
\end{theorem}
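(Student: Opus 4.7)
The plan is to establish the equality $\mathcal{A} = \mathcal{B}$ by showing both inclusions using the two preceding propositions about projections onto convex cones.

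The first step is to observe that the defining condition for $\mathcal{A}$ extends from $\mathcal{X}$ to $K = \operatorname{conic}(\mathcal{X})$. This is because both sides of the inequality $x^T y \leq \|x\|\alpha$ are positively homogeneous of degree one in $x$, and any element of $K$ is a non-negative combination of elements of $\mathcal{X}$ (so the inequality inherits from $\mathcal{X}$ to all of $K$ by scaling and summing, noting $\|x_1 + x_2\| \leq \|x_1\| + \|x_2\|$). Thus
\begin{equation*}
\mathcal{A} = \{ y \in \rd : x^T y \leq \|x\|\,\alpha \ \ \forall x \in K \}.
\end{equation*}

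For the inclusion $\mathcal{A} \subseteq \mathcal{B}$, I would fix $y \in \mathcal{A}$ and plug in the specific choice $x = p_K(y) \in K$. By Proposition \ref{prop:convex-cone-proj} we have $p_K(y)^T(y - p_K(y)) = 0$, i.e.\ $p_K(y)^T y = \|p_K(y)\|^2$. Combining this with the inequality defining $\mathcal{A}$ yields $\|p_K(y)\|^2 \leq \|p_K(y)\|\,\alpha$, which gives $\|p_K(y)\| \leq \alpha$, so $y \in \mathcal{B}$ (with the degenerate case $p_K(y) = 0$ being trivial).

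For the reverse inclusion $\mathcal{B} \subseteq \mathcal{A}$, I would fix $y \in \mathcal{B}$ and an arbitrary $x \in K$. Proposition \ref{prop:proj-scalar-prod} gives $x^T y \leq x^T p_K(y)$, and Cauchy--Schwarz together with the hypothesis $\|p_K(y)\| \leq \alpha$ yields $x^T p_K(y) \leq \|x\|\,\|p_K(y)\| \leq \|x\|\,\alpha$. Hence the defining inequality of $\mathcal{A}$ holds for every $x \in K$, and a fortiori for every $x \in \mathcal{X}$, so $y \in \mathcal{A}$.

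No step here looks like a serious obstacle: the whole argument is a short application of the two projection properties already established, together with Cauchy--Schwarz. The only minor subtlety is the extension from $\mathcal{X}$ to $\operatorname{conic}(\mathcal{X})$ in the description of $\mathcal{A}$, which is needed so that $p_K(y)$ itself is a legitimate test vector in the first inclusion.
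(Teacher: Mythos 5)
Your overall strategy and both inclusions are essentially the paper's proof: the direction $\mathcal{B}\subseteq\mathcal{A}$ via Proposition \ref{prop:proj-scalar-prod} and Cauchy--Schwarz is identical, and your direct argument for $\mathcal{A}\subseteq\mathcal{B}$ using the test vector $x=p_K(y)$ and Proposition \ref{prop:convex-cone-proj} is just the contrapositive of the paper's. However, there is a genuine gap in your preliminary step, and it sits exactly where the convexity hypothesis has to do its work. You justify the identity $\mathcal{A}=\{y: x^Ty\leq\norm[x]\,\alpha\ \forall x\in K\}$ by ``scaling and summing, noting $\norm[x_1+x_2]\leq\norm[x_1]+\norm[x_2]$.'' Scaling is fine, but summing is not: for $z=\lambda_1x_1+\lambda_2x_2$ you obtain $z^Ty\leq\alpha(\lambda_1\norm[x_1]+\lambda_2\norm[x_2])$, whereas the target bound is $\alpha\norm[z]$, and the triangle inequality says $\norm[z]\leq\lambda_1\norm[x_1]+\lambda_2\norm[x_2]$ --- that is, the quantity you have bounded $z^Ty$ by is the \emph{larger} of the two, so no conclusion follows. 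Indeed any justification that does not invoke convexity must fail, because the identity is false for non-convex $\mathcal{X}$: take $\mathcal{X}=\{e_1,e_2\}\subset\rr^2$ and $\alpha=1$; then $y=(1,1)$ satisfies $e_i^Ty=1\leq\norm[e_i]\,\alpha$ for $i=1,2$, yet for $x=e_1+e_2\in\conic{\mathcal{X}}$ one has $x^Ty=2>\sqrt{2}=\norm[x]\,\alpha$.

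The repair is one line and is precisely what the paper does at the end of its $\mathcal{A}\subseteq\mathcal{B}$ argument: because $\mathcal{X}$ is convex, every nonzero element of $K=\conic{\mathcal{X}}$ has the form $\lambda\bar{x}$ with $\lambda>0$ and $\bar{x}\in\mathcal{X}$ (normalize a non-negative combination by the sum of its coefficients and use convexity to see the normalized point lies in $\mathcal{X}$), and then positive homogeneity alone transfers the inequality from $\bar{x}$ to $\lambda\bar{x}$. With that substitution your argument is complete and coincides with the paper's.
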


\begin{proof}
  $(\mathcal{B} \subseteq \mathcal{A})$\\
  Suppose $\orv\in \mathcal{B}$ and let $x\in \mathcal{X}$, then $x\in K$ and so
\begin{align*}
  x^T (\orv - \mu) &\leq x^T p_K(\orv - \mu) \qquad \text{by Proposition \ref{prop:proj-scalar-prod}}\\
  &\leq \norm[x]\ \norm[p_K(\orv) - \mu] \qquad \text{by the Cauchy-Schwartz inequality}\\
  &< \norm[x]\ \alpha \qquad \text{since } \orv\in \mathcal{B}.
\end{align*}
  Hence $\orv\in \mathcal{A}$.\\
  $(\mathcal{A} \subseteq \mathcal{B})$\\
  Suppose $\orv\notin \mathcal{B}$ and set $x = p_K(\orv - \mu) \in K$. Now,
\begin{align*}
    x^T(\orv-\mu) &= p_K(\orv - \mu)^T(\orv - \mu)\\
    &= p_K(\orv - \mu)^T p_K(\orv - \mu) + p_{K}(\orv - \mu)^{T}\left((\orv-\mu) - p_{K}(\orv-\mu)\right)\\
    &= p_K(\orv-\mu)^T p_K(\orv-\mu) \qquad \text{by Proposition \ref{prop:convex-cone-proj}}\\
    &\geq \norm[x]\ \alpha \qquad \text{since } \orv\notin\mathcal{B}.
\end{align*}
Since $\mathcal{X}$ is convex we have $x = \lambda \bar{x}$ for some $\bar{x}\in\mathcal{X}$ and so 
we must also have $\bar{x}^{T}\orv \geq \norm[\bar{x}] \alpha$, hence $\orv\notin \mathcal{A}$.
\qed\end{proof}

\begin{corollary}
  \label{cor:cone-proj-ellipse-2}
  Suppose $\mathcal{X}$ is convex, and $P\in\rr^{d\times d}$ is a non-singular matrix. Let,
    $\mathcal{A} := \{\orv\in\rd: x^T(\orv-\mu) < \norm[Px]\ \alpha\ \forall x\in \mathcal{X}\}$
  and $\mathcal{B} := P^{T}\left(\{\tilde{\orv}\in\rd: \norm[p_{K'}(\tilde{\orv} - \tilde{\mu})] < \alpha\}\right)$
  where $\tilde{\mu} = (P^{T})^{-1}\mu$, $K' = PK$, and $K = \conic{\mathcal{X}}$. Then, $\mathcal{A} = \mathcal{B}$.
\end{corollary}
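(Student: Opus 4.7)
The plan is to deduce this corollary from Theorem \ref{thr:cone-proj-ellipse-1} by a linear change of variable that absorbs the non-singular matrix $P$. Although Theorem \ref{thr:cone-proj-ellipse-1} is stated with an arbitrary convex $\mathcal{X}$ whose conic hull is $K$, note that the defining condition $x^Ty \leq \norm[x]\,\alpha$ is positively homogeneous in $x$, so quantifying over $\mathcal{X}$ is equivalent to quantifying over $\conic{\mathcal{X}}$; thus I may and will read Theorem \ref{thr:cone-proj-ellipse-1} as applying directly to a convex cone $K$.

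First I would substitute $\tilde{x} := Px$. Since $P$ is non-singular and $K' = PK$, as $x$ ranges over $K$, $\tilde{x}$ ranges bijectively over $K'$. The defining inequality of $\mathcal{A}$ rewrites as
\begin{equation*}
(P^{-1}\tilde{x})^{T} y \leq \norm[\tilde{x}]\,\alpha \quad \forall \tilde{x}\in K'.
\end{equation*}
Using $(P^{-1}\tilde{x})^{T} y = \tilde{x}^{T}(P^{-T}y)$ and setting $\tilde{y} := P^{-T}y$, this becomes
\begin{equation*}
\tilde{x}^{T}\tilde{y} \leq \norm[\tilde{x}]\,\alpha \quad \forall \tilde{x}\in K'.
\end{equation*}
Next I would observe that $K' = PK$ is a convex cone (linearity and non-singularity of $P$ preserve both the cone and the convexity properties), so Theorem \ref{thr:cone-proj-ellipse-1} applies with $\mathcal{X} = K'$. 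It yields that the displayed condition on $\tilde{y}$ is equivalent to $\norm[p_{K'}(\tilde{y})] \leq \alpha$.

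Finally I would unwind the substitution: $y \in \mathcal{A}$ iff $P^{-T}y$ satisfies $\norm[p_{K'}(P^{-T}y)] \leq \alpha$, which in turn is exactly the statement that $y \in P^{T}\bigl(\{\tilde{y}: \norm[p_{K'}(\tilde{y})] \leq \alpha\}\bigr) = \mathcal{B}$. There is no real obstacle here beyond bookkeeping: the only things to be careful about are the transpose arising from $(P^{-1}\tilde{x})^{T} = \tilde{x}^{T}P^{-T}$ (which is why $\mathcal{B}$ is expressed using $P^{T}$ rather than $P^{-1}$), and justifying that Theorem \ref{thr:cone-proj-ellipse-1} may be invoked with the cone $K'$ itself in the role of the convex set.
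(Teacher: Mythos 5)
Your proposal is correct and follows essentially the same route as the paper: a linear change of variables $\tilde{x}=Px$, $\tilde{y}=(P^{T})^{-1}y$ combined with an application of Theorem~\ref{thr:cone-proj-ellipse-1} to the cone $K'$ (the paper writes the same chain of set equalities starting from $\mathcal{B}$ rather than $\mathcal{A}$, which is immaterial). Your explicit remark that the theorem may be invoked with $\mathcal{X}=K'$ itself, via positive homogeneity of the defining inequality, is a small justification the paper leaves implicit.
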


\begin{proof}
  First note that $K' = PK = P\conic{\mathcal{X}} = \conic{P\mathcal{X}}$. Now,
  \begin{align*}
    \mathcal{B} &= P^{T} \left( \{ \tilde{\orv}\in\rd: \norm[p_{K'}(\tilde{\orv} - \tilde{\mu})] < \alpha \}\right)\\
      &= P^{T} \left( \{ \tilde{\orv}\in\rd: \tilde{x}^T(\tilde{\orv}-\tilde{\mu}) < \norm[\tilde{x}]\ \alpha\ \forall \tilde{x}\in P\mathcal{X}\}\right) \qquad\text{by Theorem \ref{thr:cone-proj-ellipse-1}}\\
      &= \{\orv\in\rd: \tilde{x}^T\left((P^{T})^{-1}\orv -\tilde{\mu}\right)< \sqrt{\tilde{x}^T\tilde{x}} \alpha\ \forall \tilde{x}\in P\mathcal{X}\}\\
      &= \{\orv\in\rd: x^TP^{T}\left((P^{T})^{-1}\orv - (P^{T})^{-1}\mu\right) < \norm[Px]\ \alpha\ \forall x\in \mathcal{X}\}\\
      &= \{\orv\in\rd: x^T(\orv-\mu) < \norm[Px]\ \alpha\ \forall x\in K\} = \mathcal{A}
  \end{align*}
\qed\end{proof}


\bibliographystyle{plain}
\bibliography{bibtex-files/stein}

\end{document}